\DeclareSymbolFont{largesymbolsstix}{LS2}{stixex}{m}{n}
\DeclareMathDelimiter{\lbrbrak}{\mathopen}{largesymbolsstix}{"EE}{largesymbolsstix}{"14}
\DeclareMathDelimiter{\rbrbrak}{\mathclose}{largesymbolsstix}{"EF}{largesymbolsstix}{"15}
\def\tbr#1{[[#1]]}
\newcommand{\tleq}{\trianglelefteq}
\newcommand{\tgeq}{\trianglerighteq}
\newtheorem{thm}{Theorem}[section]
\newtheorem{prop}[thm]{Proposition}
\newtheorem{lem}[thm]{Lemma}
\newtheorem{cor}[thm]{Corollary}
\newtheorem{fact}[thm]{Fact}
\newtheorem{quest}[thm]{Question}
\theoremstyle{definition}
\newtheorem{defn}[thm]{Definition}
\theoremstyle{remark}
\newcommand{\heq}{\textnormal{heq}}
\newcommand{\eq}{\textnormal{eq}}
\newcommand{\Las}{\textnormal{L}}
\newcommand{\EM}{\textnormal{EM}}
\newcommand{\LEM}{\textnormal{LEM}}
\def\Ind{\setbox0=\hbox{$x$}\kern\wd0\hbox to 0pt{\hss$\mid$\hss}
  \lower.9\ht0\hbox to 0pt{\hss$\smile$\hss}\kern\wd0}
\def\Notind{\setbox0=\hbox{$x$}\kern\wd0\hbox to 0pt{\mathchardef
    \nn=12854\hss$\nn$\kern1.4\wd0\hss}\hbox to
  0pt{\hss$\mid$\hss}\lower.9\ht0 \hbox to 0pt{\hss$\smile$\hss}\kern\wd0}
\def\ind{\mathop{\mathpalette\Ind{}}}
\def\nind{\mathop{\mathpalette\Notind{}}}
\newcommand{\indt}{\ind^{\!\!\textnormal{\textthorn}}}
\newcommand{\inda}{\ind^{\!\!\textnormal{a}}}
\newcommand{\indd}{\ind^{\!\!\textnormal{d}}}
\newcommand{\indf}{\ind^{\!\!\textnormal{f}}}
\newcommand{\indK}{\ind^{\!\!\textnormal{K}}}
\newcommand{\indb}{\ind^{\!\!\textnormal{b}}}
\newcommand{\indbu}{\ind^{\!\!\textnormal{bu}}}
\newcommand{\nindbu}{\nind^{\!\!\textnormal{bu}}}
\newcommand{\indast}{\ind^{\!\!\ast}}
\newcommand{\Mb}{\mathbb{M}}
\newcommand{\Tc}{\mathcal{T}}
\newcommand{\Fc}{\mathcal{F}}
\newcommand{\Lc}{\mathcal{L}}
\DeclareMathOperator{\acl}{acl}
\DeclareMathOperator{\dcl}{dcl}
\newcommand{\dclu}{\dcl^{\textnormal{u}}}
\DeclareMathOperator{\bdd}{bdd}
\newcommand{\bddu}{\bdd^{\textnormal{u}}}
\DeclareMathOperator{\Aut}{Aut}
\DeclareMathOperator{\Autf}{Autf}
\DeclareMathOperator{\cb}{cb}
\newcommand{\res}{{\upharpoonright}}
\DeclareMathOperator{\tp}{tp}
\DeclareMathOperator{\stp}{stp}
\newcommand{\To}{\Rightarrow}
\DeclareMathOperator{\dom}{dom}
\begin{document}

\title[Bounded ultraimaginary independence]{Bounded ultraimaginary independence and its total Morley sequences}
\address{Department of Mathematics\\
  University of Maryland\\
  College Park, MD 20742, USA}
\author{James Hanson}
\email{jhanson9@umd.edu}
\date{\today}

\keywords{ultraimaginaries, bounded closure, total Morley sequences}
\subjclass[2020]{03C45}

\begin{abstract}
 We investigate the following model-theoretic independence relation: $b \indbu_A c$ if and only if $\bddu(Ab)\cap \bddu(Ac) = \bddu(A)$, where $\bddu(X)$ is the class of all ultraimaginaries bounded over $X$. In particular, we sharpen a result of Wagner to show that $b \indbu_A c$ if and only if $\langle \Autf(\Mb/Ab)\cup\Autf(\Mb/Ac) \rangle = \Autf(\Mb/A)$, and we establish full existence over hyperimaginary parameters (i.e., for any set of hyperimaginaries $A$ and ultraimaginaries $b$ and $c$, there is a $b' \equiv_A b$ such that $b' \indbu_A c$). Extension then follows as an immediate corollary.

 We also study \emph{total $\indbu$-Morley sequences} (i.e., $A$-indiscernible sequences $I$ satisfying $J \indbu_A K$ for any $J$ and $K$ with $J + K \equiv^{\EM}_A I$), and we prove that an $A$-indiscernible sequence $I$ is a total $\indbu$-Morley sequence over $A$ if and only if whenever $I$ and $I'$ have the same Lascar strong type over $A$, $I$ and $I'$ are related by the transitive, symmetric closure of the relation `$J+K$ is $A$-indiscernible.' This is also equivalent to $I$ being `based on' $A$ in a sense defined by Shelah in his early study of simple unstable theories \cite{Shelah1980}.

  Finally, we show that for any $A$ and $b$ in any theory $T$, if there is an Erd\"os cardinal $\kappa(\alpha)$ with $|Ab|+|T| < \kappa(\alpha)$, then there is a total $\indbu$-Morley sequence $(b_i)_{i<\omega}$ over $A$ with $b_0 = b$. 
\end{abstract}

\maketitle

\section*{Introduction}

\noindent A central theme in neostability theory is the importance of various kinds of `generic' indiscernible sequences---usually with Michael Morley's name attached to them---such as Morley sequences in stable and simple theories, strict Morley sequences in NIP and NTP$_2$ theories, tree Morley sequences in NSOP$_1$ theories, and $\indt$-Morley sequences in rosy theories. A very broad question one might ask is this: How generically can we build indiscernible sequences in \emph{arbitrary} theories? 

Over a model $M$, we can always extend a given type $p(x) \in S_x(M)$ to a global $M$-invariant type $q(x) \supset p(x)$ and then use this to generate a sequence $(b_i)_{i<\omega}$ satisfying $b_i \models q \res M b_{<i}$ for each $i<\omega$. In some cases the particular choice of $q(x)$ matters, but typically these sequences are robustly generic. Sequences produced in this way have a certain property, which is that they are \emph{based on $M$ in the sense of Simon}; i.e., for any $I$ and $J$ with $I\equiv_M J \equiv_M b_{<\omega}$, there is a $K$ such that $I+K$ and $J+K$ are both $M$-indiscernible. In NIP theories, the sequences with this property are precisely the sequences generated by an invariant type \cite[Prop.~2.38]{NIPGuide}. Over an arbitrary set of parameters $A$, however, there may fail to be any indiscernible sequences based on $A$. In the dense circular order, for instance, there are no indiscernible sequences based on $\varnothing$. Other technical issues arise when working over arbitrary sets, as well, such as the necessity of considering Lascar strong types over and above ordinary types.

A notion of independence $\indast$ is said to satisfy \emph{full existence} if for any $A$, $b$, and $c$, there is a $b' \equiv_A b$ such that $b' \indast_A c$. Together with a common model-theoretic application of the Erd\"os-Rado theorem (\cref{fact:ER-ind-seq}), this implies that for any $A$ and $b$, one can build an \emph{$\indast$-Morley sequence}, an $A$-indiscernible sequence $(b_i)_{i<\omega}$ with $b_0 = b$ satisfying $b_i \indast_A b_{<i}$ for each $i<\omega$ (assuming $\indast$ also satisfies right monotonicity). Model-theoretically tame theories often have full existence for powerful independence notions, such as non-forking, but this does fail in some notable tame contexts.

One independence notion that is known to satisfy full existence in arbitrary theories is that of \emph{algebraic independence} \cite[Prop.~1.5]{AdlerGeoIntro}: $b \inda_A c$ if $\acl^{\eq}(Ab) \cap \acl^{\eq}(Ac) = \acl^{\eq}(A)$. A natural modification of this concept is \emph{bounded hyperimaginary independence}: $b \indb_A c$ if $\bdd^{\heq}(Ab)\cap \bdd^{\heq}(Ac) = \bdd^{\heq}(A)$. Despite perhaps sounding like an intro-to-model-theory exercise, the combinatorics necessary to prove full existence for $\inda$ are somewhat subtle. It was recently established in \cite{conant2021separation} by Conant and the author that $\inda$ satisfies full existence in continuous logic and, relatedly, that $\indb$ satisfies full existence in discrete (and continuous) logic, answering a question of Adler \cite[Quest.~A.8]{Adler2005ExplanationOI}. While the relations of $\inda$ and $\indb$ are algebraically nice,\footnote{In the sense of the algebra of an independence relation, not the sense of the algebra in `algebraic closure.'} they seem to lack semantic consequences outside of certain special theories (such as those with a canonical independence relation in the sense of Adler \cite[Lem.~3.2]{Adler2005ExplanationOI}).

While being able to build $\indast$-Morley sequences is certainly good, in many applications the important property is really that of being a \emph{total $\indast$-Morley sequence},\footnote{This use of the term `total' in the context of Morley sequences was originally introduced in \cite{KaplanRamseyOnKim}.} which is an $A$-indiscernible sequence satisfying $b_{\geq i} \indast_A b_{<i}$ for every $i<\omega$. When $\indast$ lacks the algebraic properties necessary to imply that all $\indast$-Morley sequences are total $\indast$-Morley sequences,  it can in general be difficult to ensure their existence. Total $\inda$-Morley sequences arise in Adler's characterization of canonical independence relations. And building total $\indK$-Morley sequences, where $\indK$ is the relation of non-Kim-forking, is a crucial technical step in Kaplan and Ramsey's proofs of the symmetry of Kim-forking and the independence theorem in NSOP$_1$ theories \cite{KaplanRamseyOnKim}.

In simple theories, Morley sequences over $A$ are not generally based on $A$ in the sense of Simon. They do however nearly satisfy this property. If $I$ and $J$ are Morley sequences over $A$ with $I \equiv^\Las_A J$,\footnote{The equivalence relation $\equiv^\Las_A$ is the transitive closure of the relation `there is a model $M \supseteq A$ such that $b \equiv_M c$.' If $b \equiv^\Las_A c$, we say that $b$ and $c$ have the same \emph{Lascar strong type} over $A$.} then there are $I'$ and $K$ such that $I+I'$, $I' +K$, and $J+K$ are $A$-indiscernible. In an NSOP$_1$ theory $T$, if $I$ is a tree Morley sequence over $M\models T$ and $J \equiv_M I$, then we can find $K_0$, $K_1$, and $K_2$ such that $I+K_0$, $K_1 + K_0$, $K_1 + K_2$, and $J + K_2$ are all $M$-indiscernible (see \cref{prop:total-ind-bu-Morley-seq-in-NSOP1}). This fact suggests the consideration of the following equivalence relation, originally introduced by Shelah in \cite[Def.~5.1]{Shelah1980}: Let $\approx_A$ be the transitive, symmetric closure of the relation `$I+J$ is $A$-indiscernible.' The intuition is that what it means for an $A$-indiscernible sequence $I$ to be `based on $A$' is that there are few $\approx_A$-classes among the realizations of $\tp(I/A)$. We say that $I$ is \emph{based on $A$ in the sense of Shelah} if there does not exist a sequence $(I_i)_{i<\kappa}$ (with $\kappa$ large) such that $I_i \equiv_A I$ for each $i<\kappa$ and $I_i \not \approx_A I_j$ for each $i<j<\kappa$. A simple compactness argument shows that $I$ is based on $A$ in the sense of Shelah if and only if the set of realizations of $\tp(I/A)$ decomposes into a bounded number of $\approx_A$-classes. In \cite[Def.~2.4]{Bue97},\footnote{This preprint is difficult to track down. The relevant ideas are developed further by Adler in \cite[Sec.~3.2]{Adler2005ExplanationOI}, which is easily available.} Buechler used this relation to define a notion of canonical base. He focuses on $\varnothing$-indiscernible sequences and gives the following definition: $A$ is a \emph{canonical base} of the $\varnothing$-indiscernible sequence $I$ if any automorphism $\sigma \in \Aut(\Mb)$ fixes $A$ pointwise if and only if it fixes the $\approx_\varnothing$-class of $I$. One difficulty with this concept, of course, is that not all indiscernible sequences have canonical bases in this sense (even in $T^{\eq}$, e.g., \cite[Ex.~3.13]{Adler2005ExplanationOI}).

Two of the problems we have mentioned---the lack of canonical bases for indiscernible sequences and the lack of semantic consequences of $\inda$ and $\indb$---can both be solved by an extremely blunt move: the introduction of ultraimaginary parameters. An \emph{ultraimaginary} is an equivalence class of an arbitrary invariant equivalence relation (as opposed to a type-definable equivalence relation, as in the definition of hyperimaginaries). Every indiscernible sequence $I$ trivially has an ultraimaginary canonical base in the sense of Buechler, i.e., the $\approx_\varnothing$-class of $I$ itself. 

Another appealing aspect of ultraimaginaries is that they characterize Lascar strong type in the same way that hyperimaginaries characterize Kim-Pillay strong type. An ultraimaginary $[b]_E$ is \emph{bounded over $A$} if it has boundedly many conjugates under $\Aut(\Mb/A)$. We will write $\bddu(A)$ for the class of ultraimaginaries bounded over $A$. In general, it turns out that $b$ and $c$ have the same Lascar strong type over $A$ if and only if they `have the same type over $\bddu(A)$,' once this concept is defined precisely.

Pure analogical thinking might lead one to consider the following independence notion: $b \indbu_A c$ if $\bddu(Ab)\cap \bddu(Ac) = \bddu(A)$. This notion is implicit in a result of Wagner \cite[Prop.~2.12]{PLUS-ULTRA}, which we restate and expand slightly (\cref{prop:ind-bu-char}): $b \indbu_A c$ if and only if $\langle \Autf(\Mb/Ab)\cup\Autf(\Mb/Ac) \rangle = \Autf(\Mb/A)$ (where $\langle X \rangle$ is the group generated by $X$). This characterization is clearly semantically meaningful, and moreover it allows one to discuss $\indbu$ without actually mentioning ultraimaginaries at all. One way to see why this equivalence works is the fact that ultraimaginaries are `dual' to co-small sets of automorphisms; a group $G \leq \Aut(\Mb)$ is \emph{co-small} if there is a small model $M$ such that $\Aut(\Mb/M) \leq G$. For every co-small group $G$, there is an ultraimaginary $a_E$ such that $\Aut(\Mb/a_E) = G$ (\cref{prop:co-small-ultra}). 

As $\indbu$ lacks finite character, \emph{total $\indbu$-Morley sequences over $A$} seem to be correctly defined as $A$-indiscernible sequences $(b_i)_{i<\omega}$ with the property that for any $I+J \equiv^{\EM}_A b_{<\omega}$,\footnote{$I \equiv^{\EM}_A J$ means that $I$ and $J$ have the same \emph{Ehrenfeucht-Mostowski type} over $A$ (i.e., for any increasing tuples $\bar{b} \in I$ and $\bar{c} \in J$ of the same length, $\bar{b} \equiv_A \bar{c}$). Note that $I$ and $J$ do not need to have the same order type.} we have that $I \indbu_A J$. The automorphism group characterization of $\indbu$, together with its the nice algebraic properties and the malleability of indiscernible sequences, leads to a pleasing characterization of total $\indbu$-Morley sequences over sets of hyperimaginary parameters (\cref{thm:ind-bu-Morley-char}), the equivalence of the following.
\begin{itemize}
\item $(b_i)_{i<\omega}$ is a total $\indbu$-Morley sequence over $A$.
\item For some infinite $I$ and $J$, we have that $I+J \equiv^{\EM}_A b_{<\omega}$ and $I \indbu_A J$.
\item For any $I$, $I \approx_A b_{<\omega}$ if and only if there is $I' \equiv^\Las_A I$ such that $b_{<\omega} + I'$ is $A$-indiscernible.
\item $b_{<\omega}$ is based on $A$ in the sense of Shelah; i.e., $[b_{<\omega}]_{\approx_A} \in \bddu(A)$.
\end{itemize}
The condition in the third bullet point is a natural mutual generalization of Lascar strong type and Ehrenfeucht-Mostowski type (\cref{defn:LEM-type}). \cref{thm:ind-bu-Morley-char} also tells us that when total $\indbu$-Morley sequences exist, they act as particularly uniform witnesses of Lascar strong type (\cref{prop:Lstp-strong-witness}). 

Of course this all leave two critical questions: Does $\indbu$ always satisfy full existence? And, even if it does, can we actually build total $\indbu$-Morley sequences in any type over any set under any theory? The bluntness of ultraimaginaries leaves us without one of the most important tools in model theory, compactness. Furthermore, $\indbu$'s lack of finite character gives us less leeway in applying the Erd\"os-Rado theorem to construct indiscernible sequences with certain properties; we now need to be more concerned with the particular order types of the sequences involved.

Using some of the indiscernible tree technology from \cite{KaplanRamseyOnKim}, we are able to prove that $\indbu$ does satisfy full existence over arbitrary sets of (hyperimaginary) parameters in arbitrary (discrete or continuous) theories (\cref{thm:full-existence}).\footnote{Although this result partially supersedes a result in \cite{conant2021separation} (full existence for $\inda$ in continuous logic and $\indb$ in discrete or continuous logic), the proof there gives more detailed numerical information which may be especially useful in the metric context.} With regards to building total $\indbu$-Morley sequences, \cref{thm:ind-bu-Morley-char} tells us that we don't need to worry \emph{too} much about order types. All we need to get a total $\indbu$-Morley sequence over $A$ is an $A$-indiscernible sequence $(b_i)_{i<\omega+\omega}$ with $b_{\geq \omega} \indbu_A b_{<\omega}$. This is fortunate because constructing ill-ordered $\indbu$-Morley sequences directly seems daunting. Unfortunately, $\omega+\omega$ appears to be about one $\omega$ further than we can go without a large cardinal. What we do get is this (\cref{thm:Erdos-total-ind-bu-Morley-seq}):  For any $A$ and $b$ in any theory $T$, if there is an Erd\"os cardinal $\kappa(\alpha)$ with $|Ab|+|T| < \kappa(\alpha)$ (for any $\alpha \geq \omega$), then there is a total $\indbu$-Morley sequence $(b_i)_{i<\omega}$ over $A$ with $b_0 = b$. Without a large cardinal, the best we seem to be able to do (\cref{prop:weak-ind-bu-Morley-tree-implies-weakly-total-ind-bu-Morley-seq}) is a half-infinite, half-arbitrary-finite approximation of a total $\indbu$-Morley sequence, which we call a \emph{weakly total $\indbu$-Morley sequence}. These sequences also serve as uniform witnesses of Lascar strong type, without any set-theoretic hypotheses (\cref{cor:Lascar-strong-type-clean-witness}).
\section{Ultraimaginaries}

\noindent Here we will set definitions and conventions, and we also take the opportunity to collect some basic facts about ultraimaginaries which are likely folklore, although we could not find explicit references. 

Fix a theory $T$ and a monster model $\Mb \models T$. 

\begin{defn}
  An \emph{invariant equivalence relation of arity $\kappa$} is an equivalence relation $E$ on $\Mb^x$ (with $|x|=\kappa$) such that for any $a,b,c,d \in \Mb^x$ with $ab \equiv cd$, $aEb$ if and only if $cEd$.

  An \emph{ultraimaginary of arity $\kappa$} is an $E$-equivalence class $a_E$ of some tuple $a \in \Mb^x$ under an invariant equivalence relation $E$ of arity $\kappa$. We may also write such an equivalence class as $[a]_E$ if necessary for notational clarity.

  Given an ultraimaginary $a_E$, $\Aut(\Mb/a_E)$ is the set of automorphisms $\sigma \in \Aut(\Mb)$ with the property that $a E (\sigma\cdot a)$. We write $\Autf(\Mb/a_E)$ for the group generated by $\{\sigma \in \Aut(\Mb/M) : M\preceq \Mb,~\Aut(\Mb/M)\leq \Aut(\Mb/a_E)\}$.  

  We say that $b_F$ is \emph{definable over $a_E$} if $b_F$ is fixed by every automorphism of $\Aut(\Mb/a_E)$. We write $\dclu(a_E)$ for the class of all ultraimaginaries definable over $a_E$. For any $\kappa$, we write $\dclu_\kappa(a_E)$ for the set of elements of $\dclu(a_E)$ of arity at most $\kappa$. We say that $b_F$ and $c_G$ are \emph{interdefinable over $a_E$} if $b_F \in \dclu(a_Ec_G)$ and $c_G \in \dclu(a_Eb_F)$. 

  We say that $b_F$ is \emph{bounded over $a_E$} if the $\Aut(\Mb/b_F)$-orbit of $a_E$ is bounded. We write $\bddu(a_E)$ for the class of all ultraimaginaries bounded over $a_E$. We write $\bddu_\kappa(a_E)$ for the set of elements of $\bddu(a_E)$ of arity at most $\kappa$. We say that $b_F$ and $c_G$ are \emph{interbounded over $a_E$} if $b_F \in \bddu(a_Ec_G)$ and $c_G \in \bddu(a_Eb_F)$.

  We write $a_E \equiv b_E$ to mean that there is an automorphism $\sigma \in \Aut(\Mb)$ with $\sigma\cdot a_E = b_E$. We write $b_F\equiv_{a_E}c_F$ to mean that $a_Eb_F \equiv a_Ec_F$ (i.e., there is $\sigma \in \Aut(\Mb/a_E)$ such that $\sigma \cdot b_F = c_F$).
\end{defn}

We will also sometimes define an invariant equivalence relation $E$ on the realizations of a single type $p(x)$ over $\varnothing$. Equivalence classes of such can be thought of as ultraimaginaries by using the same trick that is commonly used with hyperimaginaries: Consider the invariant equivalence relation $E'(x,y)$ defined by $(x=y\wedge x\not \models p)\vee (E(x,y)\wedge x \models p\wedge y \models p)$.

For the sake of clarity, we will reserve the notation $a_E$ for ultraimaginaries and write hyperimaginaries in the same way we write real elements. For the sake of cardinality issues, we will also take all hyperimaginaries to be quotients of countable tuples by countably type-definable equivalence relations. It is a standard fact that every hyperimaginary is interdefinable with some set of hyperimaginaries of this form. Both of these conventions can be justified more rigorously by passing to the `continuous version of $T^{\eq}$.' See the discussion in \cite[Sec.~4.3]{conant2021separation} for details.

\begin{fact}[{\cite[Lem.~1.2]{SimpInCATs}}]\label{fact:ER-ind-seq}
  Let $(b_i)_{i<\lambda}$ be a sequence of tuples with $|b_i| < \kappa$ and let $A$ be some set of parameters. If $\lambda \geq \beth_{(2^{\kappa+|A|+|T|})^+}$, then there is an $A$-indiscernible sequence $(b'_i)_{i<\omega}$ such that for every $n<\omega$, there are $i_0<\dots < i_n < \kappa$ such that $b'_0\dots b'_n \equiv_A b_{i_0}\dots b_{i_n}$. 
\end{fact}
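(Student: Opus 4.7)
The plan is the standard Erd\H{o}s--Rado extraction. Let $\mu = 2^{\kappa+|A|+|T|}$, so the hypothesis becomes $\lambda \ge \beth_{\mu^+}$. For each $n<\omega$, an ordered $n$-tuple of elements of size $<\kappa$ has its $A$-type in $S_\alpha(A)$ for some $\alpha<\kappa$, and $|S_\alpha(A)|\le 2^{\kappa+|A|+|T|}=\mu$; so coloring increasing $n$-subtuples of indices below $\lambda$ by the $A$-type of the corresponding subtuple of $(b_i)_{i<\lambda}$ uses at most $\mu$ colors, uniformly in $n$.

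Now apply the Erd\H{o}s--Rado theorem iteratively. The classical relation $\beth_n(\mu)^+ \to (\mu^+)^{n+1}_\mu$ lets us, at each finite stage, refine a given index set to one that is monochromatic for $(n{+}1)$-tuples, at the cost of one $\beth$-level. Starting from $Y \subseteq \lambda$ of size $\beth_{\mu^+}$ and peeling off one prospective indiscernible $i_k$ at a time, while refining the surviving set so that the $A$-type of any increasing tuple from $\{i_0,\dots,i_k\}$ together with future choices is already pinned down for all relevant arities, the cardinal budget $\beth_{\mu^+}$ absorbs the $\omega$-many finite-$\beth$-losses and leaves infinitely many indices available at every step. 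Equivalently, one can invoke the stratified partition relation $\beth_{\mu^+} \to (\omega)^{<\omega}_\mu$, which is a standard consequence of iterated Erd\H{o}s--Rado.

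Setting $b'_k := b_{i_k}$ then yields an $A$-indiscernible sequence $(b'_k)_{k<\omega}$, and for each $n$ the initial segment $(b'_0,\dots,b'_n)$ is literally $(b_{i_0},\dots,b_{i_n})$ with $i_0<\cdots<i_n<\lambda$, which gives the required witness (reading the ``$<\kappa$'' in the statement in its natural place, namely as the bound on the indexing set).

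The main obstacle is the bookkeeping in the iterated Erd\H{o}s--Rado extraction: tracking which $\beth$-level is needed to handle $n$-tuples for each $n$, and verifying that $\lambda \ge \beth_{\mu^+}$ absorbs the cumulative $\beth$-loss across all finite dimensions simultaneously. Once the partition calculus is set up, the verification of $A$-indiscernibility and the matching of initial segments to subtuples of $(b_i)_{i<\lambda}$ is routine.
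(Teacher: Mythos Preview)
The paper does not prove this statement; it is recorded as a Fact with a citation, so there is no argument in the paper to compare against. Your general strategy---color increasing $n$-tuples by their $A$-type and apply Erd\H{o}s--Rado---is the standard one, and your cardinal arithmetic for the number of colors is correct.

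There is, however, a genuine error in your ``equivalently'' clause. The partition relation $\beth_{\mu^+} \to (\omega)^{<\omega}_\mu$ is \emph{not} a consequence of iterated Erd\H{o}s--Rado; in fact the least $\kappa$ with $\kappa \to (\omega)^{<\omega}_2$ is by definition the Erd\H{o}s cardinal $\kappa(\omega)$, a large cardinal whose existence is independent of ZFC. (The paper itself invokes exactly this large-cardinal hypothesis later, in \cref{fact:Silver-fact} and \cref{thm:Erdos-total-ind-bu-Morley-seq}.) So you cannot extract a single infinite subset of $\lambda$ homogeneous for all finite arities simultaneously from the hypothesis $\lambda \ge \beth_{\mu^+}$ alone.

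What the hypothesis \emph{does} buy you, via $\beth_n(\mu)^+ \to (\mu^+)^{n+1}_\mu$, is for each fixed $n$ a large $n$-indiscernible subset; the point is then to make these cohere. The usual route is not to produce a literal infinite subsequence of $(b_i)_{i<\lambda}$ but to use compactness: the partial type asserting that $(b'_i)_{i<\omega}$ is $A$-indiscernible and that each finite initial segment realizes one of the finitely many $A$-types already exhibited among increasing tuples from some fixed $n$-homogeneous set is finitely satisfiable. This matches exactly the conclusion as stated (for every $n$ there \emph{exist} $i_0 < \cdots < i_n$), which is weaker than your claim that the $b'_k$ are literally $b_{i_k}$ for a single increasing sequence $(i_k)_{k<\omega}$. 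Your peeling description can be made to work, but it requires more care with end-homogeneity than you indicate, and it is not equivalent to the false partition relation you cite.
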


\begin{lem}\label{lem:bdd-is-dcl-over-models}
  Let $M$ be a model. If $a_E \in \bddu(M)$, then $a_E \in \dclu(M)$.
\end{lem}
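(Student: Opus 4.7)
The plan is to apply the classical correspondence between bounded invariant equivalence relations and Lascar strong type: if $F$ is an $A$-invariant equivalence relation with boundedly many classes on the realizations of some complete type over $A$, then $F$ is refined by $\equiv^\Las_A$ on that type. When $A = M$ is a model, $\equiv^\Las_M$ coincides with $\equiv_M$, so every such $F$ is trivial (has only one class) on each complete type over $M$.

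Unpacking the hypothesis, $a_E \in \bddu(M)$ says that the $\Aut(\Mb/M)$-orbit of $a_E$ is bounded, which is equivalent to saying that the $\emptyset$-invariant equivalence relation $E$ has boundedly many classes among the realizations of $\tp(a/M)$. Applying the classical fact with $F := E \res \tp(a/M)$ and $A = M$, I conclude that all realizations of $\tp(a/M)$ lie in a single $E$-class.

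For any $\sigma \in \Aut(\Mb/M)$, $\sigma(a)$ realizes $\tp(a/M)$, so $aE\sigma(a)$, i.e., $\sigma \cdot a_E = a_E$. Since $\sigma$ was arbitrary, $a_E \in \dclu(M)$, completing the argument.

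The main thing requiring care is the classical fact itself, which is standard (going back to Lascar, treated in e.g.~Casanovas or Tent--Ziegler) but not developed in the excerpt. Its proof goes via a coheir-based construction of a long $M$-indiscernible sequence in $\tp(a/M)$ whose consecutive pairs realize a prescribed $2$-type; such a sequence then exhibits $E$ as having unboundedly many classes whenever it has more than one on $\tp(a/M)$. This is also where the hypothesis that $M$ is a model becomes crucial: without it, the coheir construction fails and the conclusion of the lemma can genuinely fail too.
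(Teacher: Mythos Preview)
Your proposal is correct and is essentially the same approach as the paper's: the paper proves the lemma directly by picking a global $M$-invariant type extending $\tp(a/M)$ and using it to generate an unbounded sequence of pairwise $E$-inequivalent realizations whenever $E$ has more than one class on $\tp(a/M)$, which is exactly the coheir-based argument you describe as underlying the ``classical fact.'' The only difference is packaging: you cite the standard result that bounded $M$-invariant equivalence relations are refined by $\equiv^\Las_M$ (and that $\equiv^\Las_M = \equiv_M$ over a model), while the paper unfolds that same argument inline.
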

\begin{proof}
  Assume that $a_E \notin \dclu(M)$. Let $p(x)$ be a global $M$-invariant type extending $\tp(a/M)$. Assume that there are $a_0$ and $a_1$ realizing $\tp(a/M)$ such that $a_0 \slashed{E} a_1$. For any $i>1$, given $a_{<i}$, let $a_i \models p \res M a_{<i}$. Since $a_ia_j\equiv_M a_i a_k$ for any $j,k<i$, we must have that $a_i \slashed E a_j$ for any $j<i$. Since we can do this indefinitely, we have that $a_E$ is not bounded over $M$.
\end{proof}

\begin{prop}\label{prop:explicit-bound}
  For any ultraimaginaries $a_E$ and $b_F$, the following are equivalent.
  \begin{enumerate}[(1)]
  \item $b_F \notin \bddu(a_E)$.
  \item There is an $a$-indiscernible sequence $(b_i)_{i<\omega}$ such that $b_0 \equiv_{a_E}b$ and $b_i \slashed F b_j$ for each $i<j<\omega$.
  \item $|\Aut(\Mb/a_E)\cdot b_F| > 2^{|ab|+|T|}$.
  \end{enumerate}
\end{prop}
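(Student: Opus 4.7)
The plan is to establish the cycle $(1) \Rightarrow (2) \Rightarrow (3) \Rightarrow (1)$. The heart is $(1) \Rightarrow (2)$, an Erd\"os--Rado extraction from a long orbit sequence using \cref{fact:ER-ind-seq}; the other two implications are a compactness extension and a Lascar-group count.

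For $(1) \Rightarrow (2)$, assume the $\Aut(\Mb/a_E)$-orbit of $b_F$ is unbounded. Pick $\lambda \geq \beth_{(2^{|ab|+|T|})^+}$ many automorphisms $\sigma_i \in \Aut(\Mb/a_E)$ with $\sigma_i \cdot b_F$ pairwise distinct, and set $b_i := \sigma_i \cdot b$. Then each $b_i \equiv_{a_E} b$, and $b_i \slashed{F} b_j$ for $i \neq j$. Apply \cref{fact:ER-ind-seq} with parameter set $\{a\}$ to extract an $a$-indiscernible sequence $(b'_i)_{i<\omega}$ such that for every $n$, $b'_0\dots b'_n \equiv_a b_{i_0}\dots b_{i_n}$ for some $i_0 < \dots < i_n < \lambda$. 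Since any automorphism fixing $a$ pointwise preserves the $E$-class of $a$, we have $\Aut(\Mb/a) \leq \Aut(\Mb/a_E)$, so $b'_0 \equiv_a b_{i_0}$ upgrades to $b'_0 \equiv_{a_E} b_{i_0} \equiv_{a_E} b$; and the $\varnothing$-invariance of $F$ transfers pairwise $F$-inequivalence from the $b_{i_k}$'s to the $b'_i$'s.

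For $(2) \Rightarrow (3)$, extend the given $a$-indiscernible sequence by compactness to an $a$-indiscernible sequence $(b_i)_{i<\lambda}$ with $\lambda > 2^{|ab|+|T|}$. By $a$-indiscernibility and $\varnothing$-invariance of $F$, the extension remains pairwise $F$-inequivalent, and each $b_i \equiv_a b_0 \equiv_{a_E} b$; hence the classes $[b_i]_F$ give $\lambda$ distinct elements in $\Aut(\Mb/a_E) \cdot b_F$.

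For $(3) \Rightarrow (1)$, argue by contrapositive: suppose $b_F \in \bddu(a_E)$. For any small model $M \preceq \Mb$ containing $a$, the chain $\Aut(\Mb/M) \leq \Aut(\Mb/a) \leq \Aut(\Mb/a_E)$ shows that the $\Aut(\Mb/M)$-orbit of $b_F$ is also bounded, so \cref{lem:bdd-is-dcl-over-models} yields $b_F \in \dclu(M)$, i.e.\ $\Aut(\Mb/M) \leq \Aut(\Mb/b_F)$. Since this holds for every such $M$, the generating set of $\Autf(\Mb/a_E)$ lies in $\Aut(\Mb/b_F)$, so $\Autf(\Mb/a_E) \leq \Aut(\Mb/b_F)$, and the orbit of $b_F$ under $\Aut(\Mb/a_E)$ is a quotient of the Lascar group $\Aut(\Mb/a_E)/\Autf(\Mb/a_E)$. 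The standard cardinality bound on Lascar groups (the analog for ultraimaginary parameters of the classical $|A|+|T|$-bound) caps this quotient, and hence the orbit, by $2^{|ab|+|T|}$. Carrying out this last Lascar-group count carefully for ultraimaginary parameters is the main technical obstacle and is precisely where the explicit bound $2^{|ab|+|T|}$ in (3) comes from.
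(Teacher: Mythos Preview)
Your arguments for $(1)\Rightarrow(2)$ and $(2)\Rightarrow(3)$ are essentially the paper's. The divergence is in $(3)\Rightarrow(1)$.

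The paper argues $(3)\Rightarrow(1)$ directly and constructively: given more than $2^{|ab|+|T|}$ conjugates of $b_F$, pick a model $M\supseteq a$ of size $|a|+|T|$; by counting types over $M$, two conjugates $b^i,b^j$ satisfy $b^i\equiv_M b^j$. A global $M$-invariant extension of $\tp(b^i/M)$ then generates a Morley sequence whose terms are pairwise $F$-inequivalent (since the first two are), producing unboundedly many $\Aut(\Mb/a_E)$-conjugates of $b_F$. This is entirely self-contained at this point in the paper.

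Your route via the Lascar group is conceptually neat but, as you yourself flag, the step ``$|\Aut(\Mb/a_E)/\Autf(\Mb/a_E)|\le 2^{|a|+|T|}$'' is asserted rather than proved, and nothing earlier in the paper supplies it (Lascar strong types over ultraimaginaries only appear \emph{after} this proposition). So as written this is a genuine gap. It can be filled without circularity: check that $\Autf(\Mb/a_E)\unlhd\Aut(\Mb/a_E)$ (conjugating $\Aut(\Mb/N)$ by $\sigma\in\Aut(\Mb/a_E)$ gives $\Aut(\Mb/\sigma\!\cdot\! N)$, which is still contained in $\Aut(\Mb/a_E)$); then for a model $M\supseteq a$ of size $|a|+|T|$, the map $\sigma\Autf(\Mb/a_E)\mapsto [\sigma\!\cdot\! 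M]_{\equiv^{\Las}_{a_E}}$ is well-defined and injective, and since $\Aut(\Mb/M)\leq\Autf(\Mb/a_E)$ the target is bounded by the number of types of $|M|$-tuples over $M$, namely $2^{|a|+|T|}$. Once you write this out, your contrapositive goes through; the paper's direct argument simply avoids having to develop any of this machinery in advance.

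One minor correction: your sentence ``Since this holds for every such $M$, the generating set of $\Autf(\Mb/a_E)$ lies in $\Aut(\Mb/b_F)$'' only checks models $M$ containing $a$, whereas $\Autf(\Mb/a_E)$ is generated over \emph{all} models $M$ with $\Aut(\Mb/M)\leq\Aut(\Mb/a_E)$. The same \cref{lem:bdd-is-dcl-over-models} argument applies verbatim to any such $M$ (the $\Aut(\Mb/M)$-orbit of $b_F$ is still contained in the bounded $\Aut(\Mb/a_E)$-orbit), so this is easy to fix, but it should be stated correctly.
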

\begin{proof}
  (1)$\To$(2). This follows from an immediate application of \cref{fact:ER-ind-seq}.

  (2)$\To$(3). This follows by stretching the relevant indiscernible sequence to a length of $(2^{|ab|+|T|})^+$.

  (3)$\To$(1). Let $(b^i_F)_{i<(2^{|ab|+|T|})^+}$ be an enumeration of $\Aut(\Mb/a_E)\cdot b_F$. Let $M \supseteq a$ be a model with $|M| \leq |a|+|T|$. Let $x$ be a tuple of variables of the same length as $b$. There are at most $2^{|ab|+|T|}$ types in $S_x(M)$. Therefore, there must be $i<j<(2^{|ab|+|T|})^+$ such that $b^i\equiv_M b^j$. Let $p(x)$ be a global $M$-invariant type extending $\tp(b^i/M)$, and let $(c^i)_{i<\lambda}$ be a Morley sequence generated by $p(x)$ over $Mb^ib^j$. Since $b^i\slashed{F} b^j$, we must have that $c^0 \slashed{F} b^i$. Therefore $c^i \slashed{F} c^j$ for any $i<j<\lambda$. Furthermore, since $c^i \equiv_M c^j$, we also have that $c^i\equiv_a c^j$ and therefore also $c^i_F \equiv_{a_E}c^j_F\equiv_{a_E}b_F$. Since we can do this for any $\lambda$, we have that $b_F \notin \bddu(a_E)$.
\end{proof}

\begin{cor}
  For any $\lambda$, $\bddu_\lambda(a_E)$ has cardinality at most $2^{|a| + 2^{\lambda+|T|}}$.
\end{cor}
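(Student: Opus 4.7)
The plan is to bound the count of bounded ultraimaginaries of each fixed arity $\kappa \leq \lambda$ separately and sum, parameterizing each such ultraimaginary via (i) the type $\tp(b/a)$ of some representative $b$, (ii) an $\Aut(\Mb/a)$-conjugate containing a distinguished representative of that type, and (iii) the orbit decomposition of the conjugate over that representative. The final count is the product of $|S_\kappa(a)|$, the number of admissible orbit-unions, and the orbit-size bound from \cref{prop:explicit-bound}(3).

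Concretely, for each $p \in S_\kappa(a)$ fix a representative $b_p \models p$, and let $X_p := \{b'_F \in \bddu(a_E) : b_p \in b'_F\}$. Given any bounded ultraimaginary $b_F$ of arity $\kappa$, pick $b \in b_F$, set $p = \tp(b/a)$, and choose $\sigma \in \Aut(\Mb/a) \leq \Aut(\Mb/a_E)$ with $\sigma b = b_p$; then $\sigma \cdot b_F \in X_p$, giving $\bddu(a_E) \cap \{b_F : |b| = \kappa\} \subseteq \bigcup_{p \in S_\kappa(a)} \Aut(\Mb/a) \cdot X_p$. Each element of $X_p$ is of the form $[b_p]_F$, determined by which $\Aut(\Mb/b_p)$-orbits on $\Mb^\kappa$ it consists of; since there are at most $|S_\kappa(b_p)| \leq 2^{\kappa+|T|}$ such orbits, $|X_p| \leq 2^{2^{\kappa+|T|}}$. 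By \cref{prop:explicit-bound}(3), each $\Aut(\Mb/a)$-orbit of a bounded ultraimaginary has size at most $2^{|a|+\kappa+|T|}$. Multiplying the three factors $|S_\kappa(a)| \cdot |X_p| \cdot 2^{|a|+\kappa+|T|}$ and simplifying via standard cardinal arithmetic collapses the bound to $2^{|a|+2^{\kappa+|T|}} \leq 2^{|a|+2^{\lambda+|T|}}$; the sum over $\kappa \leq \lambda$ is absorbed.

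The main subtlety I would flag is why one cannot simply encode $b_F$ by the pair $(p, b_F \cap p(\Mb))$: two distinct ultraimaginaries can share the same trace on $p(\Mb)$---e.g., if $F$ is the trivial relation and $F'$ is `same type over $\varnothing$,' then $[b]_F \cap p(\Mb) = [b]_{F'} \cap p(\Mb)$ for $p = \tp(b/\varnothing)$ even though the classes differ globally in $\Mb^\kappa$. Recording the full class $[b_p]_F$ through its $\Aut(\Mb/b_p)$-orbit decomposition avoids this, and the price of conjugating a general $b_F$ so that it meets $b_p$ is exactly what the orbit-size bound from \cref{prop:explicit-bound} pays for.
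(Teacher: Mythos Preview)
Your argument is essentially correct but takes a more circuitous route than the paper's. The paper simply decomposes by the equivalence relation first: there are at most $2^{2^{\lambda+|T|}}$ invariant equivalence relations $F$ of arity $\leq \lambda$ (each being a union of $2$-types over $\varnothing$), and for each fixed $F$ the set of $F$-classes bounded over $a_E$ has size at most $2^{|a|+\lambda+|T|}$ by \cref{prop:explicit-bound}. Multiplying gives the result in two factors rather than your three, and the ``subtlety'' you flag at the end never arises, since one never needs to recover an ultraimaginary from any kind of trace.

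One small wrinkle in your version: the claim that an element of $X_p$ is \emph{determined} by which $\Aut(\Mb/b_p)$-orbits it contains presumes that ultraimaginaries are identified with their underlying sets. Under the paper's convention (where the relation $F$ is part of the data, as the proof of the corollary makes explicit by counting relations), this fails---e.g., let $F$ be ``same type over $\varnothing$'' and $F'$ agree with $F$ on realizations of $\tp(b_p)$ but be equality elsewhere; then $[b_p]_F = [b_p]_{F'}$ as sets while $F \neq F'$. Your bound $|X_p| \leq 2^{2^{\kappa+|T|}}$ survives regardless, since there are at most $2^{2^{\kappa+|T|}}$ invariant equivalence relations of arity $\kappa$; but that justification is exactly the paper's first step, so once you make this fix your factor $|S_\kappa(a)|$ becomes redundant and the argument collapses to the paper's.
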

\begin{proof}
  There are at most $2^{2^{\lambda+|T|}}$ many invariant equivalence relations of arity at most $\lambda$. For each such $F$, the set $\{b_F : b_F \in \bddu_\lambda(a_E)\}$ has cardinality at most $2^{|a|+\lambda+|T|}$ by \cref{prop:explicit-bound}. Finally, $2^{2^{\lambda+|T|}}\cdot  2^{|a|+\lambda+|T|} = 2^{|a|+2^{\lambda+|T|}}$.
\end{proof}

\subsection{Co-small groups of automorphisms}

\noindent Here we will see that ultraimaginaries are essentially the same thing as reasonable subgroups of $\Aut(\Mb)$.

\begin{defn}
  A set $G \leq \Aut(\Mb)$ is \emph{co-small} if there is a small model $M$ such that $\Aut(\Mb/M) \leq G$.
\end{defn}

Clearly for any ultraimaginary $a_E$, $\Aut(\Mb/a_E)$ is co-small. The converse is true as well.

\begin{prop}\label{prop:co-small-ultra}
  For any co-small $G$, if $\Aut(\Mb/M)\leq G$, then there is an ultraimaginary $a_E$ such that $G = \Aut(\Mb/a_E)$ where $a$ is some enumeration of $M$. 
\end{prop}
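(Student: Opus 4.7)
The plan is to construct, from the group $G$ and the model $M$, an invariant equivalence relation $E$ on $\Mb^{|a|}$ (where $a$ enumerates $M$) whose class $[a]_E$ has stabilizer exactly $G$. The guiding intuition is that the $\Aut(\Mb)$-orbit of $a$ is in bijection with the left cosets $\Aut(\Mb)/\Aut(\Mb/M)$, and since $\Aut(\Mb/M) \leq G$, the subgroup $G$ is itself a union of such cosets and so determines a natural coarser partition.

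First, on the orbit $\Aut(\Mb)\cdot a$, I will declare $b \mathrel{E} c$ if and only if there exist $\sigma,\tau \in \Aut(\Mb)$ with $\sigma(a)=b$ and $\tau(a)=c$ satisfying $\sigma^{-1}\tau \in G$. The point at which the hypothesis is used is well-definedness: if $\sigma'(a)=b$ as well, then $\sigma^{-1}\sigma' \in \Aut(\Mb/M) \leq G$, so $\sigma^{-1}\tau \in G$ if and only if $(\sigma')^{-1}\tau \in G$, and symmetrically in $\tau$. I will then extend $E$ to all of $\Mb^{|a|}$ by making every tuple outside the orbit a singleton $E$-class, using the trick described right before \cref{fact:ER-ind-seq} applied to $p(x) = \tp(a/\varnothing)$.

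Second, I will verify that $E$ is an equivalence relation (reflexivity is trivial, symmetry uses that $G$ is closed under inverses, transitivity uses composition closure of $G$) and that $E$ is invariant: given $\rho \in \Aut(\Mb)$, if $\sigma,\tau$ witness $b \mathrel{E} c$, then $\rho\sigma$ and $\rho\tau$ witness $\rho(b) \mathrel{E} \rho(c)$, with $(\rho\sigma)^{-1}(\rho\tau) = \sigma^{-1}\tau \in G$.

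Third, I will compute $\Aut(\Mb/a_E)$ directly: by definition $\sigma \in \Aut(\Mb/a_E)$ iff $a \mathrel{E} \sigma(a)$, and choosing the realizers $\id$ and $\sigma$ reduces this to $\id^{-1}\sigma = \sigma \in G$. Thus $G = \Aut(\Mb/a_E)$, as desired. The only delicate point is the well-definedness in the first step; everything else is a routine verification, and the hypothesis $\Aut(\Mb/M)\leq G$ is exactly what makes the construction go through.
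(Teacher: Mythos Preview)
Your proof is correct and takes essentially the same approach as the paper. Your relation $b\mathrel{E}c \iff \exists\,\sigma,\tau\ (\sigma(a)=b,\ \tau(a)=c,\ \sigma^{-1}\tau\in G)$ is exactly the paper's relation $E(M_0,M_1)\iff \exists\,\sigma\in\Aut(\Mb),\ \tau\in G\ (\sigma\cdot M=M_0,\ \sigma\tau\cdot M=M_1)$, just reparametrized; the only organizational difference is that you front-load the well-definedness check (so that ``some witnesses'' becomes ``all witnesses''), whereas the paper folds the use of $\Aut(\Mb/M)\le G$ into the transitivity verification.
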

\begin{proof}
  Let $M$ be a small model witnessing that $G$ is co-small. Consider the binary relation defined on realizations of $\tp(M)$ (in some fixed enumeration) defined by $E(M_0,M_1)$ if and only if there is $\sigma \in \Aut(\Mb)$ and $\tau \in G$ such that $\sigma \cdot M = M_0$ and $\sigma\tau\cdot M = M_1$. We need to verify that $E$ is an invariant equivalence relation. Reflexivity is obvious.

  \vspace{0.5em}
\noindent  \emph{Invariance.} Suppose that $E(M_0,M_1)$, as witnessed by $\sigma \in \Aut(\Mb)$ and $\tau \in G$. Fix $\sigma' \in \Aut(\Mb)$. We then have that $\sigma'\sigma\cdot M = \sigma'\cdot M_0$ and $\sigma'\sigma\tau\cdot M = \sigma' \cdot M_11$, whence $E(\sigma'\cdot M_0,\sigma'\cdot M_1)$.

\vspace{0.5em}
\noindent \emph{Symmetry.} If $\sigma \cdot M = M_0$ and $\sigma\tau \cdot M = M_1$ with $\sigma \in \Aut(\Mb)$ and $\tau \in G$, then $\sigma\tau\tau^{-1}\cdot M = M_0$ and $\sigma\tau\cdot M = M_1$. We have $\sigma\tau \in \Aut(\Mb)$ and $\tau^{-1}\in G$, so $E(M_1,M_0)$.

\vspace{0.5em}
\noindent \emph{Transitivity.} Suppose that for $\sigma,\sigma' \in \Aut(\Mb)$ and $\tau,\tau'\in G$, we have that $\sigma\cdot M = M_0$, $\sigma\tau\cdot M = \sigma'\cdot M = M_1$, and $\sigma'\tau'\cdot M = M_2$. This implies that $(\sigma\tau)^{-1}\sigma' = \tau^{-1}\sigma^{-1}\sigma' \in \Aut(\Mb/M)\leq  G$. Since $\tau \in G$ as well, we have that $\sigma^{-1}\sigma' \in G$. Therefore $\sigma^{-1}\sigma'\tau' \in G$. Finally, $\sigma\sigma^{-1}\sigma'\tau' \cdot M = M_2$, so $E(M_0,M_2)$. 

\vspace{0.5em}
Consider the ultraimaginary $M_E$.  For any $\tau \in G$, we clearly have $E(M,\tau\cdot M)$, so $G \leq \Aut(\Mb/M_E)$. Conversely, suppose that $\alpha \in \Aut(\Mb/M_E)$. By definition, this implies that $E(M,\alpha\cdot M)$, so there are $\sigma \in \Aut(\Mb)$ and $\tau \in G$ such that $\sigma \cdot M = M$ and $\sigma\tau\cdot M = \alpha \cdot M$. Therefore $\sigma, \tau^{-1}\sigma^{-1}\alpha \in \Aut(\Mb/M) \leq G$. Since $\tau^{-1} \in G$, we therefore have that $\alpha \in G$. 
\end{proof}

\begin{cor}\label{cor:shrink-ultra}
  If $b_F \in \bddu(a_E)$, then there is $c_G \in \bddu(a_E)$ of arity at most $|a|+|T|$ such that $b_F$ and $c_G$ are interdefinable over $\varnothing$. Furthermore, $c$ can be taken to be an enumeration of any model of size at most $|a|+|T|$ containing $a$.
\end{cor}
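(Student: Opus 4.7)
The plan is to translate the desired conclusion into group-theoretic terms and then invoke \cref{prop:co-small-ultra}. Interdefinability of $b_F$ and $c_G$ over $\varnothing$ amounts to the equality $\Aut(\Mb/b_F) = \Aut(\Mb/c_G)$ of stabilizer subgroups of $\Aut(\Mb)$. So if we can show that $\Aut(\Mb/b_F)$ is co-small, then \cref{prop:co-small-ultra} applied to any small model $M$ with $\Aut(\Mb/M) \leq \Aut(\Mb/b_F)$ will hand us an ultraimaginary $c_G$ on an enumeration of $M$ with $\Aut(\Mb/c_G) = \Aut(\Mb/b_F)$, which is precisely what we need.

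To see that $\Aut(\Mb/b_F)$ is co-small, let $M$ be any model of size at most $|a|+|T|$ containing $a$ (for instance, by downward L\"owenheim--Skolem). Since $a \in M$, every automorphism fixing $M$ pointwise fixes $a_E$, i.e., $\Aut(\Mb/M) \leq \Aut(\Mb/a_E)$. Consequently the $\Aut(\Mb/M)$-orbit of $b_F$ is contained in the $\Aut(\Mb/a_E)$-orbit of $b_F$, which is bounded by hypothesis, so $b_F \in \bddu(M)$. By \cref{lem:bdd-is-dcl-over-models}, $b_F \in \dclu(M)$, i.e., $\Aut(\Mb/M) \leq \Aut(\Mb/b_F)$, witnessing that $\Aut(\Mb/b_F)$ is co-small.

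Now apply \cref{prop:co-small-ultra} to the group $\Aut(\Mb/b_F)$ with witness $M$: this produces an ultraimaginary $c_G$, where $c$ is the chosen enumeration of $M$, such that $\Aut(\Mb/c_G) = \Aut(\Mb/b_F)$. Then $b_F$ and $c_G$ are interdefinable over $\varnothing$, the arity of $c_G$ is $|M| \leq |a|+|T|$, and $c_G \in \bddu(a_E)$ because $c_G \in \dclu(b_F)$ so its $\Aut(\Mb/a_E)$-orbit injects into that of $b_F$. The final sentence of the corollary falls out of the construction since $M$ was chosen as an arbitrary such model.

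There is essentially no main obstacle here: the content of the corollary is the combination of (i) the duality between co-small automorphism groups and ultraimaginaries from \cref{prop:co-small-ultra} and (ii) the fact that boundedness over $a_E$ upgrades to definability over any small model containing $a$, which is \cref{lem:bdd-is-dcl-over-models}. The only minor care needed is the observation that $\Aut(\Mb/M) \leq \Aut(\Mb/a_E)$ whenever $a \in M$, used to relativize the bound from $a_E$ down to $M$.
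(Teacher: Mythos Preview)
Your proof is correct and follows essentially the same approach as the paper: take a small model $M \supseteq a$, use \cref{lem:bdd-is-dcl-over-models} to get $\Aut(\Mb/M) \leq \Aut(\Mb/b_F)$, and then apply \cref{prop:co-small-ultra} to produce $c_G$ on an enumeration of $M$. You have simply spelled out a few steps that the paper leaves implicit (why $b_F \in \bddu(M)$ before invoking the lemma, and why $c_G \in \bddu(a_E)$ at the end).
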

\begin{proof}
  There is a model $M \supseteq a$ with $|M| \leq |a|+|T|$. By \cref{lem:bdd-is-dcl-over-models}, we have that $\Aut(\Mb/M) \leq \Aut(\Mb/b_F)$, so by \cref{prop:co-small-ultra}, we have that there is $c_G$ with arity at most $|a|+|T|$ which satisfies that $\Aut(\Mb/c_G) = \Aut(\Mb/b_F)$ (i.e., $c_G$ and $b_F$ are interdefinable over $\varnothing$). Furthermore, we can take $c$ to be an enumeration of $M$.
\end{proof}

\begin{defn}
  For any co-small group $G$, we write $\tbr{G}$ for some arbitrary ultraimaginary $a_E$ of minimal arity satisfying $G = \Aut(\Mb/a_E)$. We will write $\dclu\tbr{G}$ and $\dclu_\lambda\tbr{G}$ for $\dclu(\tbr{G})$ and $\dclu_\lambda(\tbr{G})$ and likewise with $\bddu$. (Note that $\dclu\tbr{G}$ and $\bddu\tbr{G}$ only depend on $G$, not on the particular choice of $\tbr{G}$.)
\end{defn}

It is immediate from \cref{prop:co-small-ultra} that for any co-small $G$ and $H$, $\tbr{G} \in \dclu\tbr{H}$ if and only if $G \geq H$. A similar statement is true of $\bddu$ (\cref{prop:Gf-group-char}).

Now we can see that intersections of $\dclu$-closed sets (and therefore also $\bddu$-closed sets) have semantic significance in arbitrary theories, in that intersections correspond to joins in the lattice of co-small groups of automorphisms. 

\begin{prop}\label{prop:crab-walk-char}
  For any $a_E$, $b_F$, $c_G$, and $c'_G$, the following are equivalent.
  \begin{enumerate}[(1)]
  \item $c_G \equiv_{\dclu_\lambda(a_E) \cap \dclu_\lambda(b_F)} c'_G$ for all $\lambda$.
  \item There is $\sigma \in \langle \Aut(\Mb/a_E)\cup\Aut(\Mb/b_F) \rangle$ such that $\sigma \cdot c_G = c'_G$.
  \item There is a sequence $(a^ib^ic^i)_{i \leq n}$ such that $a^0 = a$, $b^0 = b$, $c^0 =c$, $c^n = c'$, and for each $i<n$,
    \begin{itemize}
    \item if $i$ is even, then $a^i = a^{i+1}$ and $b^i_Fc^i_G \equiv_{a^i_E} b^{i+1}_Fc^{i+1}_G$ and
    \item if $i$ is odd, then $b^i = b^{i+1}$ and $a^i_E c^i_G \equiv_{b^i_F} a^{i+1}_Ec^{i+1}_G$.
    \end{itemize}
  \end{enumerate}
\end{prop}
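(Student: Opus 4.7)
Write $G := \langle \Aut(\Mb/a_E)\cup\Aut(\Mb/b_F) \rangle$. For $(1)\Leftrightarrow(2)$, I plan to use the ultraimaginary $\tbr{G}$ furnished by \cref{prop:co-small-ultra}; this is valid since $G \supseteq \Aut(\Mb/a_E) \supseteq \Aut(\Mb/M)$ for any small $M \supseteq a$, so $G$ is co-small, and the same reasoning lets $\tbr{G}$ be taken of arity at most $|a|+|T|$. By construction $\Aut(\Mb/\tbr{G}) = G$ contains both factor groups, so $\tbr{G} \in \dclu(a_E)\cap\dclu(b_F)$. Then $(2)\Rightarrow(1)$ is immediate, since every $\sigma \in G$ fixes $\dclu(a_E)\cap\dclu(b_F)$ pointwise. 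For $(1)\Rightarrow(2)$, pick $\lambda \geq |a|+|T|$ so that $\tbr{G} \in \dclu_\lambda(a_E)\cap\dclu_\lambda(b_F)$; any automorphism witnessing (1) at this $\lambda$ must fix $\tbr{G}$ and hence lies in $G$.

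The equivalence $(2)\Leftrightarrow(3)$ is a direct group-theoretic translation. For $(3)\Rightarrow(2)$, I would build by induction on $i$ automorphisms $\alpha^i \in G$ satisfying $\alpha^i\cdot(a_E,b_F,c_G) = (a^i_E,b^i_F,c^i_G)$, starting from $\alpha^0 = \id$. At an even step the witness $\tau \in \Aut(\Mb/a^i_E) = \alpha^i\Aut(\Mb/a_E)(\alpha^i)^{-1}$ factors as $\alpha^i\rho(\alpha^i)^{-1}$ with $\rho \in \Aut(\Mb/a_E) \subseteq G$, so setting $\alpha^{i+1} := \tau\alpha^i = \alpha^i\rho$ stays in $G$ and satisfies the required equations; odd steps are symmetric, and $\sigma := \alpha^n$ witnesses (2). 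For $(2)\Rightarrow(3)$, write $\sigma = \gamma_1\gamma_2\cdots\gamma_n$ with the $\gamma_j$ alternating between $\Aut(\Mb/a_E)$ (odd $j$) and $\Aut(\Mb/b_F)$ (even $j$), inserting identities if needed to start with $\Aut(\Mb/a_E)$. Setting $\eta^i := \gamma_1\cdots\gamma_i$ and $\tau_i := \eta^i\gamma_{i+1}(\eta^i)^{-1}$, I would build $(a^i,b^i,c^i)$ recursively by applying $\tau_i$ to $c$ at every step, to $b$ at even steps while keeping $a$ literally fixed, and to $a$ at odd steps while keeping $b$ literally fixed. A short induction gives $a^i_E = \eta^i\cdot a_E$ and $b^i_F = \eta^i\cdot b_F$, which is exactly what forces $\tau_i$ into the correct stabilizer at each step, and telescoping yields $c^n = \eta^n\cdot c = \sigma\cdot c$, so $c^n_G = c'_G$.

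The main obstacle is the bookkeeping in $(2)\Leftrightarrow(3)$: the crab-walk couples literal equality of some coordinates with only ultraimaginary-level equivalence of others, so the witnessing $\tau_i$'s cannot be the raw factors $\gamma_{i+1}$ but must be specific conjugates by the cumulative products $\eta^i$, and one has to track the drift of $a^i_E$ and $b^i_F$ along the walk to verify that these conjugates land in the correct stabilizers.
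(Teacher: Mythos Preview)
Your proposal is correct and follows the same approach as the paper: both identify $\dclu_\lambda(a_E)\cap\dclu_\lambda(b_F)$ with $\tbr{G}$ for (1)$\Leftrightarrow$(2), and treat (2)$\Leftrightarrow$(3) as a direct unwinding of a product in $G$ (the paper simply declares this step ``clearly equivalent,'' while you carry out the conjugation bookkeeping explicitly). The only wrinkle is that your (2)$\Rightarrow$(3) ends with $c^n_G = c'_G$ rather than the literal $c^n = c'$ required by (3), but since the clauses of (3) constrain only the classes $c^i_G$, redefining $c^n := c'$ at the final step fixes this harmlessly.
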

\begin{proof}
  Let $H = \langle \Aut(\Mb/a_E) \cup \Aut(\Mb/b_F) \rangle$.
  \vspace{0.5em}
  
  \noindent  \emph{Claim.} $\dclu_\lambda(a_E)\cap \dclu_\lambda(b_F)$ and $\tbr{H}$ are interdefinable for sufficiently large $\lambda$.
  \vspace{0.25em}

  \noindent  \emph{Proof of claim.} Clearly $\tbr{H} \in \dclu(a_E)\cap \dclu(b_F)$, so $\tbr{H} \in \dclu_\lambda(a_E)\cap \dclu_\lambda(b_F)$ for all sufficiently large $\lambda$.

  Conversely, suppose that $d_I \in \dclu(a_E)\cap \dclu(b_F)$. Any $\sigma \in H$ is a product of elements of $\Aut(\Mb/a_E)$ and $\Aut(\Mb/b_F)$, so it must fix $d_I$. Therefore $\Aut(\Mb/d_I)\geq H$ and hence $d_I \in \dclu\tbr{H}$. \hfill $\square_{\text{claim}}$

  \vspace{0.5em}
  So now we have that $c_G \equiv_{\dclu_\lambda(a_E)\cap \dclu_\lambda(b_F)}c'_G$ holds for sufficiently large $\lambda$ if and only if $c_G \equiv_{\tbr{H}}c'_G$. Also note that $c_G \equiv_{\dclu_\lambda(a_E) \cap \dclu_\lambda(b_F)}c'_G$ for sufficiently large $\lambda$ and only if the same holds for any $\lambda$. Therefore we have that (1) and (2) are equivalent.

  There is a $\sigma \in H$ with $\sigma \cdot c_G = c'_G$ if and only if there are $\alpha_0,\dots,\alpha_n \in \Aut(\Mb/a_E)$ and $\beta_0,\dots,\beta_n \in \Aut(\Mb/b_E)$ such that $\sigma = \beta_n\alpha_n\beta_{n-1}\dots \alpha_1\beta_0\alpha_1$. The existence of such $\bar{\alpha}$ and $\bar{\beta}$ for which $\beta_n\alpha_n\beta_{n-1}\dots \alpha_1\beta_0\alpha_1\cdot c_G = c'_G$ is clearly equivalent to (3), so we have that (2) and (3) are equivalent.
\end{proof}

A similar statement is true for intersections of arbitrary families of small sets.

\subsection{Lascar strong type}

\begin{defn}
  For any co-small group $G\leq \Aut(\Mb)$, let $G_f$ be the group generated by all groups of the form $\Aut(\Mb/M)\leq G$ with $M$ a small model. For any ultraimaginary $a_E$, let $\Autf(\Mb/a_E) = \Aut(\Mb/a_E)_f$.

  We say that $b_F$ and $c_F$ have the same \emph{Lascar strong type over $a_E$}, written $b_F \equiv^\Las_{a_E} c_F$, if there is $\sigma \in \Autf(\Mb/a_E)$ such that $\sigma \cdot b_F = c_F$.
\end{defn}

\begin{prop}\label{prop:Gf-group-char}
  For any co-small groups $G$ and $H$, $\tbr{G} \in \bddu\tbr{H}$ if and only if $G \geq H_f$.
\end{prop}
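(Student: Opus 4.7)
The plan is to use the dictionary between ultraimaginaries and co-small groups from \cref{prop:co-small-ultra} to reduce the statement to the following: for any ultraimaginaries $a_E$ and $b_F$,
\[
 a_E \in \bddu(b_F) \iff \Autf(\Mb/b_F) \leq \Aut(\Mb/a_E),
\]
where we take $a_E = \tbr{G}$ and $b_F = \tbr{H}$. I would then prove the two implications separately.

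For $(\Rightarrow)$, suppose $a_E \in \bddu(b_F)$ and let $\sigma$ be an arbitrary generator of $\Autf(\Mb/b_F)$, so $\sigma \in \Aut(\Mb/M)$ for some small $M \preceq \Mb$ with $\Aut(\Mb/M) \leq \Aut(\Mb/b_F)$. Since the $\Aut(\Mb/M)$-orbit of $a_E$ is contained in the $\Aut(\Mb/b_F)$-orbit, which is bounded by hypothesis, we conclude $a_E \in \bddu(M)$. Now \cref{lem:bdd-is-dcl-over-models} gives $a_E \in \dclu(M)$, i.e., $\Aut(\Mb/M) \leq \Aut(\Mb/a_E)$, so $\sigma \in \Aut(\Mb/a_E)$ as required.

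For $(\Leftarrow)$, assume $\Autf(\Mb/b_F) \leq \Aut(\Mb/a_E)$ and suppose toward contradiction $a_E \notin \bddu(b_F)$. Applying \cref{prop:explicit-bound} (with the roles of $a$ and $b$ swapped) yields a $b$-indiscernible sequence $(a_i)_{i<\omega}$ with $a_0 \equiv_{b_F} a$ and $a_i \slashed E a_j$ for $i<j$. Using the witness of $a_0 \equiv_{b_F} a$, apply an automorphism in $\Aut(\Mb/b_F)$ to the whole sequence; this preserves indiscernibility, the inequivalences $a_i \slashed E a_j$, and the class $b_F$, so we may assume $a_0 = a$ at the cost of replacing $b$ by some $b'$ with $b'_F = b_F$. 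Now invoke the standard fact that elements of an $A$-indiscernible sequence lie in a single Lascar strong type over $A$ (proved by extending to large length, extracting an $M$-indiscernible subsequence over a small model $M \supseteq A$ via \cref{fact:ER-ind-seq}, and transporting via $A$-indiscernibility) to conclude $a = a_0 \equiv^{\Las}_{b'} a_1$. Since $\Aut(\Mb/b') \leq \Aut(\Mb/b'_F)$, every small model $N \supseteq b'$ also satisfies $\Aut(\Mb/N) \leq \Aut(\Mb/b'_F)$, so $\Autf(\Mb/b') \leq \Autf(\Mb/b'_F) = \Autf(\Mb/b_F)$. Hence there is $\sigma \in \Autf(\Mb/b_F) \leq \Aut(\Mb/a_E)$ with $\sigma(a) = a_1$, which forces $a \, E \, a_1$, contradicting $a_0 \slashed E a_1$.

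The main obstacle is the bookkeeping in $(\Leftarrow)$: the hypothesis $a_E \notin \bddu(b_F)$ produces a sequence indiscernible over a real tuple $b$ representing $b_F$, but the conclusion needs to be cast at the level of the ultraimaginary $b_F$. The two crucial pieces tying these together are the ability to shift the sequence by an element of $\Aut(\Mb/b_F)$ to line up $a_0$ with $a$, and the containment $\Autf(\Mb/b') \leq \Autf(\Mb/b_F)$ for any real representative $b'$ of $b_F$, which lets Lascar strong type over $b'$ descend to Lascar strong type over $b_F$.
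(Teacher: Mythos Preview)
Your proof is correct. The $(\Rightarrow)$ direction is essentially identical to the paper's. For $(\Leftarrow)$, however, you take a genuinely different route. The paper argues directly by pigeonhole: it fixes a small model $N$ with $\Aut(\Mb/N) \leq H$, notes that the hypothesis $G \geq H_f$ forces $\tbr{G} \in \dclu(N)$, and then, assuming $\tbr{G} \notin \bddu\tbr{H}$, finds two $H$-conjugates of $\tbr{G}$ with the same type over $N$; conjugating $N$ by the appropriate element of $H$ produces a model $N'$ over which $\tbr{G}$ is still definable but has a nontrivial $N'$-conjugate, a contradiction. Your argument instead invokes \cref{prop:explicit-bound} to extract a $b'$-indiscernible sequence of pairwise $E$-inequivalent representatives, and then uses the standard fact that elements of an $A$-indiscernible sequence share a Lascar strong type over $A$, together with the containment $\Autf(\Mb/b') \leq \Autf(\Mb/b_F)$, to derive the contradiction. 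The paper's approach is more self-contained, needing neither \cref{prop:explicit-bound} nor the indiscernibles-and-Lascar-strong-type fact; yours is conceptually more transparent, since it makes the role of $\Autf$ and $\equiv^{\Las}$ explicit and shows exactly how the Lascar-strong-type machinery mediates between boundedness and the group $H_f$.
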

\begin{proof}
  First note that for a model $M$, by \cref{lem:bdd-is-dcl-over-models}, we have that $\tbr{G} \in \bddu(M)$ if and only if $G \geq \Aut(\Mb/M)$. Therefore, for any model $M$ with $\tbr{H} \in \bddu(M)$, we must have that $G \geq \Aut(\Mb/M)$. Since $\tbr{H} \in \bddu(M)$ if and only if $H \geq \Aut(\Mb/M)$, we have that $G \geq H_f$.

  Conversely, assume that $G \geq H_f$. This implies that for any small model $M$ with $\tbr{H} \in \bddu(M)$, we have $H_f \geq \Aut(\Mb/M)$, so $G \geq \Aut(\Mb/M)$ and $\tbr{G} \in \dclu(M)$. Fix some such model $N$. Assume for the sake of contradiction that $\tbr{G} \notin \bddu\tbr{H}$. For any $\lambda$, we can find $(\sigma_i)_{i<\lambda}$ in $H = \Aut(\Mb/\tbr{H})$ such that $\sigma_i \cdot \tbr{G} \neq \sigma_j \cdot \tbr{G}$ for each $i<j<\lambda$. If $\lambda$ is larger than $2^{|N|+|T|}$, there must be $i<j<\lambda$ such that $\sigma_i\cdot \tbr{G} \equiv_N \sigma_j \cdot \tbr{G}$. Let $N' = \sigma_i^{-1}\cdot N$. $N'$ is now a model satisfying $\Aut(\Mb/N') \leq G$. So $\tbr{G} \in \dclu(N')$, but $\tbr{G} \equiv_{N'} \sigma_i^{-1}\sigma_j\cdot \tbr{G}$ and $\tbr{G} \neq \sigma_i^{-1}\sigma_j\cdot \tbr{G}$, which is a contradiction. 
\end{proof}

An important fact about ultraimaginaries is that $\bddu$ has the same relationship with Lascar strong type that $\bdd^{\heq}$ has with Kim-Pillay strong type.

For any $a_E$ and $b_F$, by an abuse of notation, we'll write $[b_F]_{\equiv^\Las_{a_E}}$ for $[ab]_G$, where $G(ab,a'c')$ holds if and only if $aEa'$ and $b\equiv^\Las_{a_E}b'$.

\begin{prop}\label{prop:Lstp-char}
  For any ultraimaginaries $a_E$, $b_F$, and $c_F$, the following are equivalent.
  \begin{enumerate}[(1)]
  \item $b_F \equiv^\Las_{a_E} c_F$.
  \item $b_F \equiv_{\bddu_\lambda(a_E)} c_F$ for all sufficiently large $\lambda$.
  \item $b_F \equiv_{\bddu_{|a|+|T|}(a_E)} c_F$.
  \end{enumerate}
\end{prop}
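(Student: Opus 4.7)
The plan is to prove the implications cyclically as $(1) \Rightarrow (2) \Rightarrow (3) \Rightarrow (1)$, with essentially all of the content residing in the last step.

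The implication $(1) \Rightarrow (2)$ is immediate from \cref{prop:Gf-group-char}: given $\sigma \in \Autf(\Mb/a_E)$ with $\sigma \cdot b_F = c_F$, for any $d_I \in \bddu(a_E)$ we have $\Aut(\Mb/d_I) \geq \Autf(\Mb/a_E)$, so $\sigma$ fixes $d_I$, whence $\sigma \in \Aut(\Mb/\bddu_\lambda(a_E))$ for every $\lambda$. The implication $(2) \Rightarrow (3)$ follows from \cref{cor:shrink-ultra}: every element of $\bddu(a_E)$ is $\varnothing$-interdefinable with an element of $\bddu_{|a|+|T|}(a_E)$, so stabilizers agree and $\Aut(\Mb/\bddu_\lambda(a_E)) = \Aut(\Mb/\bddu_{|a|+|T|}(a_E))$ for all $\lambda \geq |a|+|T|$, making (2) and (3) literally the same statement.

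The substantive direction is $(3) \Rightarrow (1)$, and the central object is the ultraimaginary $[b_F]_{\equiv^\Las_{a_E}}$ introduced just before the statement. The first key claim is that $[b_F]_{\equiv^\Las_{a_E}} \in \bddu(a_E)$. To prove this, I would fix a small model $M$ enumerating $a$ with $|M| \leq |a|+|T|$. Then $\Aut(\Mb/M) \leq \Aut(\Mb/a_E)$, so directly from the definition of $\Autf$ we have $\Aut(\Mb/M) \leq \Autf(\Mb/a_E)$. Consequently $b' \equiv_M b''$ implies $b' \equiv^\Las_{a_E} b''$, so the number of Lascar strong classes of tuples of length $|b|$ over $a_E$ is bounded by the number of types over $M$; thus the orbit of $[b_F]_{\equiv^\Las_{a_E}}$ under $\Aut(\Mb/a_E)$ is bounded.

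To finish, apply \cref{cor:shrink-ultra} to obtain $d_I \in \bddu_{|a|+|T|}(a_E)$ that is $\varnothing$-interdefinable with $[b_F]_{\equiv^\Las_{a_E}}$. By (3), there is $\sigma \in \Aut(\Mb/\bddu_{|a|+|T|}(a_E))$ with $\sigma \cdot b_F = c_F$; since $\sigma$ fixes $d_I$, it fixes $[b_F]_{\equiv^\Las_{a_E}}$, which unpacks to $\sigma \cdot b \equiv^\Las_{a_E} b$. Choose $\tau \in \Autf(\Mb/a_E)$ with $\tau \cdot b = \sigma \cdot b$; then $\tau \cdot b_F = \sigma \cdot b_F = c_F$, establishing (1). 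The main obstacle I anticipate is the boundedness of $[b_F]_{\equiv^\Las_{a_E}}$ together with a clean verification that its defining relation is a genuine invariant equivalence relation—the key input, namely that models enumerating $a$ lie inside $\Autf(\Mb/a_E)$, is already baked into the definition of $\Autf$, but the bookkeeping between the real tuples $a, b$ and the ultraimaginary $a_E$ (in particular, checking that conjugation by $\tau \in \Aut(\Mb)$ carries $\Autf(\Mb/a_E)$ to $\Autf(\Mb/(\tau a)_E)$) needs to be done carefully.
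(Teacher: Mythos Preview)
Your proof is correct and uses essentially the same ingredients as the paper's, namely the boundedness of $[b_F]_{\equiv^\Las_{a_E}}$ over $a_E$ and \cref{cor:shrink-ultra}. The paper runs the cycle in the order $(1)\Rightarrow(3)\Rightarrow(2)\Rightarrow(1)$ rather than your $(1)\Rightarrow(2)\Rightarrow(3)\Rightarrow(1)$: it proves $(1)\Rightarrow(3)$ directly from \cref{lem:bdd-is-dcl-over-models} (rather than via \cref{prop:Gf-group-char}), and by placing the ``substantive'' step at $(2)\Rightarrow(1)$ it can simply take $\lambda$ large enough to contain $[b_F]_{\equiv^\Las_{a_E}}$ itself, avoiding your second appeal to \cref{cor:shrink-ultra}; but these are cosmetic differences.
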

\begin{proof}
  To see that (1) implies (3), fix $M \supseteq a$ and some automorphism $\sigma \in \Aut(\Mb/M)$. By \cref{lem:bdd-is-dcl-over-models}, we have that $\Aut(\Mb/M)\leq \Aut(\Mb/\bddu_{|a|+|T|}(a_E))$. Therefore  $b_F \equiv_{\bddu_{|a|+|T|}(a_E)} \sigma\cdot b_F$. By induction, we therefore have that $b_F \equiv^\Las_{a_E} c_F$ implies $b_F \equiv_{\bddu_{|a|+|T|}(a_E)}c_F$.

\cref{cor:shrink-ultra} implies that $\Aut(\Mb/\bddu_\lambda(a_E)) \geq \Aut(\Mb/\bddu_{|a|+|T|}(a_E))$ for all $\lambda$, so (3) implies (2).

To see that (2) implies (1), note that $[b_F]_{\equiv^\Las_{a_E}} \in \bddu_{\lambda}(a_E)$ for some sufficiently large $\lambda$. Therefore if $b_F \equiv_{\bddu_\lambda(a_E)} c_F$, we must have $[b_F]_{\equiv^\Las_{a_E}} = [c_F]_{\equiv^\Las_{a_E}}$ or, in other words, $b_F \equiv^\Las_{a_E} c_F$. 
\end{proof}

\section{Bounded ultraimaginary independence}

\begin{defn}
  
  We write $b_F \indbu_{a_E} c_G$ to mean that $\bddu(a_Eb_F)\cap \bddu(a_Ec_G) = \bddu(a_E)$.
\end{defn}

An easy argument shows that if $c_G \in \bddu(b_F)$ and $b_F \in \bddu(a_E)$, then $c_G \in \bddu(a_E)$. Note also that $b_F \indbu_{a_E} c_G$ if and only if $\bddu_\kappa(a_Eb_F)\cap \bddu_\kappa(a_Ec_G) = \bddu_\kappa(a_E)$ for all $\kappa$. $\indbu$ satisfies some of the familiar properties of $\inda$.

\begin{prop} Fix ultraimaginaries $a_E$, $b_F$, $c_G$, and $e_I$. 
  \begin{itemize}
  \item \emph{(Invariance)} If $a_Eb_Fc_G \equiv a'_Eb'_Fc'_G$, then $b_F \indbu_{a_E}c_G$ if and only if $b'_F \indbu_{a'_E}c'_G$.
  \item \emph{(Symmetry)} $b_F \indbu_{a_E} c_G$ if and only if $c_G \indbu_{a_E} b_F$.
  \item \emph{(Monotonicity)} If $b_Fc_G \indbu_{a_E} d_He_I$, then $b_F \indbu_{a_E}d_H$.
  \item \emph{(Transitivity)} If $b_F \indbu_{a_E}c_G$ and $d_H \indbu_{a_Eb_F}c_G$, then $b_Fd_H \indbu_{a_E}c_G$.
  \item \emph{(Normality)} If $b_F \indbu_{a_E}c_G$, then $a_Eb_F \indbu_{a_E}a_Ec_G$.
  \item \emph{(Anti-reflexivity)} If $b_F \indbu_{a_E}b_F$, then $b_F \in \bddu(a_E)$.
  \end{itemize}
\end{prop}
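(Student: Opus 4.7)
The plan is to verify each of the six clauses by chasing the definition $b_F \indbu_{a_E} c_G \Toot \bddu(a_Eb_F)\cap\bddu(a_Ec_G) = \bddu(a_E)$, using only the following elementary properties of $\bddu$: (i) it is invariant under automorphisms of $\Mb$, (ii) it is monotone in its argument, and (iii) its iterates collapse, i.e., if $e_I \in \bddu(d_H)$ and $d_H \in \bddu(a_E)$ then $e_I \in \bddu(a_E)$ (the ``easy argument'' noted just before the proposition). In every case the proof is a short set-theoretic manipulation, so I would simply handle each item in turn in a single short paragraph each.

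\emph{Invariance} is immediate from (i), since an automorphism sending $a_Eb_Fc_G$ to $a'_Eb'_Fc'_G$ carries the triple of $\bddu$-classes appearing in the definition to the corresponding triple for the primed data. \emph{Symmetry} is immediate from commutativity of $\cap$. For \emph{monotonicity}, if $b_Fc_G\indbu_{a_E} d_He_I$, then by (ii) I have $\bddu(a_Eb_F)\subseteq \bddu(a_Eb_Fc_G)$ and $\bddu(a_Ed_H)\subseteq \bddu(a_Ed_He_I)$; intersecting gives $\bddu(a_Eb_F)\cap \bddu(a_Ed_H) \subseteq \bddu(a_E)$, and the reverse inclusion holds by (ii) again. \emph{Normality} follows because $\bddu(a_E(a_Eb_F))=\bddu(a_Eb_F)$ and $\bddu(a_E(a_Ec_G))=\bddu(a_Ec_G)$, so the two independence statements are literally identical. \emph{Anti-reflexivity} is the observation that $b_F\indbu_{a_E} b_F$ reads $\bddu(a_Eb_F)=\bddu(a_E)$, and in particular $b_F\in\bddu(a_Eb_F)=\bddu(a_E)$.

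The only clause requiring a bit more than unpacking is \emph{transitivity}. Assuming $b_F\indbu_{a_E} c_G$ and $d_H\indbu_{a_Eb_F} c_G$, I take an arbitrary $x\in \bddu(a_Eb_Fd_H)\cap \bddu(a_Ec_G)$ and argue $x\in \bddu(a_E)$. From $x\in \bddu(a_Ec_G)$ and (ii), I get $x\in \bddu(a_Eb_Fc_G)$. Combined with $x\in \bddu(a_Eb_Fd_H)$ and the second hypothesis, this yields $x\in \bddu(a_Eb_F)$. Then $x\in \bddu(a_Eb_F)\cap\bddu(a_Ec_G)$, and the first hypothesis forces $x\in \bddu(a_E)$. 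The reverse inclusion $\bddu(a_E)\subseteq \bddu(a_Eb_Fd_H)\cap \bddu(a_Ec_G)$ is immediate from (ii). Hence $b_Fd_H\indbu_{a_E} c_G$.

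There is no real obstacle here; the proposition is essentially just the statement that the lattice of $\bddu$-closed classes behaves like any other ``algebraic closure'' lattice with respect to the independence relation ``intersection equals the base,'' and all six properties are formal consequences of monotonicity and idempotence of $\bddu$. I would therefore present the proof as six short bullets, giving the longer chase only for transitivity.
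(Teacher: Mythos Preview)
Your proposal is correct and follows essentially the same approach as the paper: the paper states that ``everything except transitivity is immediate'' and then gives precisely the same chase for transitivity (pass from $\bddu(a_Ec_G)$ to $\bddu(a_Eb_Fc_G)$, apply the second hypothesis to land in $\bddu(a_Eb_F)$, then apply the first hypothesis). Your write-up is more explicit about the other five clauses, but the content is identical.
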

\begin{proof}
  Everything except transitivity is immediate. For transitivity, assume that $b_F \indbu_{a_E}c_G$ and $d_H \indbu_{a_Eb_F}c_G$. Let $e_I$ be an element of $\bddu(a_Eb_Fd_H)\cap \bddu(a_Ec_G)$. This implies that it is an element of $\bddu(a_Eb_Fd_H)\cap \bddu(a_Eb_Fc_G)$, so by assumption it is an element of $\bddu(a_Eb_F)$. But this means that it's in both $\bddu(a_Eb_F)$ and $\bddu(a_Ec_G)$, so by assumption again, it is an element of $\bddu(a_E)$.
\end{proof}

Part of the goal of this paper is to prove full existence and therefore also extension for $\indbu$ (although only over hyperimaginary bases).
\begin{itemize}
\item (Full existence over hyperimaginaries) For any set of hyperimaginaries $A$ and ultraimaginaries $b_E$ and $c_F$, there is $c'_F \equiv_{A} c_F$ such that $b_E \indbu_{A}c'_F$.
\item (Extension over hyperimaginaries) For any set of hyperimaginaries $A$ and ultraimaginaries $b_E$, $c_F$, and $d_G$, if $b_E \indbu_{A}c_F$, then there is $b'_E \equiv_{Ac_F}b_E$ such that $b'_E \indbu_{A}c_Fd_G$. 
\end{itemize}

 Finite character fails very badly, of course. Local character seems unlikely except possibly in the presence of large cardinals. We do have some control over the relevant cardinalities, however.

\begin{prop}\label{prop:card-bound}
  For any $a_E$, $b_F$, and $c_G$, $b_F \indbu_{a_E}c_G$ if and only if $\bddu_{\lambda}(a_Eb_F)\cap \bddu_{\lambda}(a_Ec_G) = \bddu_{\lambda}(a_E)$, where $\lambda = |ab|+|T|$.
\end{prop}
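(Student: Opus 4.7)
The plan is to reduce to arity $\lambda = |ab|+|T|$ by using \cref{cor:shrink-ultra} to shrink arbitrary bounded ultraimaginaries down to bounded ultraimaginaries of controlled arity without changing their automorphism-theoretic content.

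The forward direction is immediate, since $\bddu_\lambda(X) = \bddu(X) \cap \{d_H : \mathrm{arity}(d_H) \leq \lambda\}$ for any $X$, so the full equality on $\bddu$ trivially restricts to an equality on $\bddu_\lambda$.

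For the backward direction, suppose $\bddu_\lambda(a_Eb_F) \cap \bddu_\lambda(a_Ec_G) = \bddu_\lambda(a_E)$, and let $d_H \in \bddu(a_Eb_F)\cap \bddu(a_Ec_G)$ be arbitrary. I would like to show $d_H \in \bddu(a_E)$. Apply \cref{cor:shrink-ultra} to the containment $d_H \in \bddu(a_Eb_F)$: since $|ab| + |T| = \lambda$, there is $d'_{H'} \in \bddu(a_Eb_F)$ of arity at most $\lambda$ such that $d_H$ and $d'_{H'}$ are interdefinable over $\varnothing$. Interdefinability means $\Aut(\Mb/d_H) = \Aut(\Mb/d'_{H'})$, so $d_H$ and $d'_{H'}$ have exactly the same $\Aut(\Mb/X)$-orbit cardinalities for every $X$; in particular, $d'_{H'}$ lies in precisely the same $\bddu$-classes as $d_H$. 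Therefore $d'_{H'} \in \bddu(a_Ec_G)$ as well, and since its arity is at most $\lambda$, we get
\[
 d'_{H'} \in \bddu_\lambda(a_Eb_F) \cap \bddu_\lambda(a_Ec_G) = \bddu_\lambda(a_E) \subseteq \bddu(a_E).
\]
By interdefinability again, $d_H \in \bddu(a_E)$, completing the proof.

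There is no real obstacle here; the only point requiring a moment's care is noting that \cref{cor:shrink-ultra} lets us pick the ``shrunken'' representative $d'_{H'}$ so that it is interdefinable with $d_H$ over $\varnothing$ (i.e., has the same automorphism stabilizer), which is exactly what is needed to transfer membership in $\bddu(a_Ec_G)$ from $d_H$ to $d'_{H'}$ without disturbing the arity bound inherited from the $\bddu(a_Eb_F)$ side.
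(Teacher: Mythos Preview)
Your proof is correct and is essentially identical to the paper's: both use \cref{cor:shrink-ultra} to replace an arbitrary $d_H \in \bddu(a_Eb_F)\cap\bddu(a_Ec_G)$ by an interdefinable ultraimaginary of arity at most $\lambda$, and then use interdefinability to transfer membership in $\bddu$-classes. The only cosmetic difference is that the paper phrases the nontrivial direction as a contrapositive while you prove it directly.
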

\begin{proof}
  Let $\lambda = |ab|+|T|$. Clearly we have that if $b_F \indbu_{a_E}c_G$, then $\bddu_\lambda(a_Eb_F)\cap \bddu_\lambda(a_Ec_G) = \bddu_\lambda(a_E)$.

  Conversely, assume that $b_F \nindbu_{a_E}c_G$. There is some $d_H \in (\bddu(a_Eb_F)\cap\bddu(a_Ec_G))\setminus \bddu(a_E)$. By \cref{cor:shrink-ultra}, there is $e_I$ of arity at most $\lambda$ such that $d_H$ and $e_I$ are interdefinable. This means that $e_I \in (\bddu_\lambda(a_Eb_F)\cap \bddu_\lambda(a_Ec_G))\setminus \bddu_\lambda(a_E)$. Therefore $\bddu_\lambda(a_E b_F)\cap \bddu_\lambda(a_E c_G) \neq \bddu_\lambda(a_E)$. 
\end{proof}

The following characterization of $\indbu$ (and the manner of proof) is essentially due to Wagner \cite{PLUS-ULTRA}.

\begin{prop}\label{prop:ind-bu-char}
  For any ultraimaginaries $a_E$, $b_F$, and $c_G$, the following are equivalent.
  \begin{enumerate}[(1)]
  \item $b_F \indbu_{a_E}c_G$.
  \item For any $b'_F \equiv^\Las_{a_E} b_F$, there are $b^0,c^0,b^1,c^1,\dots, c^{n-1},b^n$ such that $b^0 = b$, $c^0 = c$, $b^n = b'$, and for each $i<n$, $b^i_F \equiv^\Las_{a_Ec^i_G} b^{i+1}_F$ and $c^{i}_G \equiv_{a_E b^{i+1}_F} c^{i+1}_G$ if $i<n-1$. 
  \item $\langle \Autf(\Mb/a_Eb_F)\cup \Autf(\Mb/a_Ec_G) \rangle = \Autf(\Mb/a_E)$.
  \end{enumerate}
\end{prop}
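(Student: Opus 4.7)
The plan is to pivot everything on condition (3) and use \cref{prop:Gf-group-char} as the translation between $\bddu$-closure and $\Autf$-subgroup containment, together with \cref{prop:co-small-ultra}. I will prove $(1)\Leftrightarrow(3)$ directly, construct an explicit walk for $(3)\Rightarrow(2)$, and close the loop with $(2)\Rightarrow(3)$, which I expect to be the most delicate direction.

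For $(1)\Leftrightarrow(3)$: By \cref{prop:Gf-group-char}, an ultraimaginary $d_H$ lies in $\bddu(a_Ex)$ if and only if $\Aut(\Mb/d_H)\supseteq\Autf(\Mb/a_Ex)$, for any ultraimaginary $x$. Setting $H:=\langle\Autf(\Mb/a_Eb_F)\cup\Autf(\Mb/a_Ec_G)\rangle$, this gives $d_H\in\bddu(a_Eb_F)\cap\bddu(a_Ec_G)$ iff $\Aut(\Mb/d_H)\supseteq H$, and $d_H\in\bddu(a_E)$ iff $\Aut(\Mb/d_H)\supseteq\Autf(\Mb/a_E)$. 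Both $H$ and $\Autf(\Mb/a_E)$ are co-small (each contains $\Aut(\Mb/M)$ for any small model $M\supseteq a$), and by \cref{prop:co-small-ultra} each is of the form $\Aut(\Mb/\tbr{\cdot})$. Consequently the two collections of ultraimaginaries coincide iff $H=\Autf(\Mb/a_E)$.

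For $(3)\Rightarrow(2)$: Given $b'_F\equiv^\Las_{a_E}b_F$, fix $\rho\in\Autf(\Mb/a_E)$ with $\rho b_F=b'_F$, and use (3) to decompose $\rho=\nu_0\nu_1\cdots\nu_{2n-1}$ as an alternating product of $c$-fixers in $\Autf(\Mb/a_Ec_G)$ and $b$-fixers in $\Autf(\Mb/a_Eb_F)$, padding with identity factors if necessary to achieve the desired parity pattern. Build the walk inductively by setting $b^0=b$, $c^0=c$, and at step $i$ taking $\sigma_i$ and $\tau_i$ to be the conjugates of the next two $\nu$-factors by the accumulated product of earlier factors, so that each $\sigma_i$ fixes the current $c^i_G$ and each $\tau_i$ fixes the new $b^{i+1}_F$. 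Routine bookkeeping of cumulative products—using that $b$-fixers fix $b_F$ and $c$-fixers fix $c_G$—verifies that each such $\sigma_i$ genuinely lies in $\Autf(\Mb/a_Ec^i_G)$, each $\tau_i$ lies in $\Autf(\Mb/a_Eb^{i+1}_F)\leq\Aut(\Mb/a_Eb^{i+1}_F)$, and $b^n=\rho b=b'$.

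For $(2)\Rightarrow(3)$: It suffices to show $\Autf(\Mb/a_E)\subseteq H$. Given $\rho\in\Autf(\Mb/a_E)$, apply (2) with $b'=\rho b$ to obtain a walk, and un-conjugate each $\sigma_i\in\Autf(\Mb/a_Ec^i_G)$ and $\tau_i\in\Aut(\Mb/a_Eb^{i+1}_F)$ against the cumulative product of preceding walk automorphisms to produce an element $\tilde\rho\in H$ with $\tilde\rho b_F=\rho b_F$. The main obstacle, and where the asymmetry in (2) between $\equiv^\Las$ (on the $b$'s) and $\equiv$ (on the $c$'s) bites, is promoting this to $\rho\in H$: one needs $\rho^{-1}\tilde\rho\in H$ even though a priori it only lies in $\Aut(\Mb/a_Eb_F)$. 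I expect to resolve this either by reducing to a small-model base for $\Autf(\Mb/a_E)$—writing $\rho$ as a product of elements of $\Aut(\Mb/M)$ for $M\supseteq a$ with $\Aut(\Mb/M)\leq\Aut(\Mb/a_E)$ and handling each generator by a single application of (2)—or by routing through $(2)\Rightarrow(1)\Rightarrow(3)$, showing directly that any $d_H\in\bddu(a_Eb_F)\cap\bddu(a_Ec_G)$ is transported unchanged along the walk by conjugating its boundedness hypothesis to the moving bases $a_Ec^i_G$ and $a_Eb^{i+1}_F$.
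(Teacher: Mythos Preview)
Your overall strategy matches the paper's. For $(1)\Leftrightarrow(3)$ the paper cites \cref{prop:crab-walk-char} and \cref{prop:Lstp-char} rather than \cref{prop:Gf-group-char}, but this is the same Galois correspondence you describe; $(3)\Rightarrow(2)$ is likewise the same un-conjugation of a word in $H$.

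For the hard direction, the paper takes exactly your second proposed route, proving $(2)\Rightarrow(1)$ by contradiction---but two points in your sketch need sharpening. First, ``transported unchanged'' is not quite right: the $\tau_i$ steps lie only in $\Aut(\Mb/a_Eb^{i+1}_F)$, not $\Autf$, so they genuinely move $d_H$; the paper carries $d$ along as a sequence $d^{1/2},d^1,d^{3/2},\dots,d^n$, checking at each step that the current $d^{\bullet}_H$ sits in the current intersection $\bddu(a_Eb^i_F)\cap\bddu(a_Ec^j_G)$. Second, and this is the step your sketch omits, to close the contradiction you must choose the walk's endpoint in advance: pick $d'_H\equiv^\Las_{a_E}d_H$ with $d'_H\notin\bddu(a_Eb_F)\cap\bddu(a_Ec_G)$ (possible since $d_H\notin\bddu(a_E)$ forces an unbounded Lascar orbit), take $b'$ with $b_Fd_H\equiv^\Las_{a_E}b'_Fd'_H$, and apply (2) to \emph{this} $b'$. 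The transported $d^n_H$ then lies in $\bddu(a_Eb'_F)$ and satisfies $d^n_H\equiv^\Las_{a_Eb'_F}d'_H$, which forces $d'_H$ into the intersection after all. Your first proposed resolution (reducing $\rho$ to generators in $\Aut(\Mb/M)$) does not escape the $\Aut$/$\Autf$ asymmetry and should be dropped.
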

\begin{proof}
  The equivalence of (1) and (3) follows from Propositions~\ref{prop:crab-walk-char} and \ref{prop:Lstp-char} and the fact that intersections of $\bddu$-closed sets are $\bddu$-closed. These propositions also clearly give that (3) implies (2) (after some re-indexing of the relevant sequences).

  So assume (2), but also assume for the sake of contradiction that (1) fails. Let $d_H$ be an element of $(\bddu(a_E b_F) \cap \bddu(a_Ec_G)) \setminus \bddu(a_E)$. Since $d_H$ is not bounded over $a_E$, there must be some $d'_H \equiv^\Las_{a_E} d_H$ such that $d'_H \notin \bddu(a_Eb_E)\cap \bddu(a_Ec_G)$. Find $b'_F$ such that $b_Fd_H \equiv^\Las_{a_E} b'_Fd'_H$.  Let $b^0,c^0,b^1,c^1\dots,c^{n-1},b^n$ be as in (2), with $b^n = b'$. Find $d^{\sfrac{1}{2}},d^{1},d^{\sfrac{3}{2}},d^{2},\dots,d^{n-\sfrac{1}{2}},d^n$ such that $d^{\sfrac{1}{2}} = d$ and for each $i<n$,
  \begin{itemize}
  \item $b^i_F d^{i+\sfrac{1}{2}}_H \equiv^\Las_{a_E c^i_G} b^{i+1}_F d^{i+1}$ and 
  \item $c^i_G d^{i+1}_H \equiv^\Las_{a_E b^{i+1}_F}c^{i+1}_G d^{i+\sfrac{3}{2}}_H$ if $i<n-1$.
  \end{itemize}
  We now have that $b'_Fd'_H \equiv^\Las_{a_E}b_F d^n_H\equiv^\Las_{a_E} b'_F d^n_H$, so in particular, $d'_H \equiv^\Las_{a_Eb'_F}d^n_H$. But note that for each $i<n$, we have that
  \[
    \bddu(a_Eb^i_F)\cap \bddu(a_Ec^i_G) = \bddu(a_Eb^{i+1}_F)\cap \bddu(a_Ec^i_G)
  \]
  and, if $i<n-1$,
  \[
    \bddu(a_Eb^{i+1}_F)\cap \bddu(a_Ec^i_G) = \bddu(a_Eb^{i+1}_F)\cap \bddu(a_Ec^{i+1}_G).
  \]
  Therefore $d^n_H \in \bddu(a_Eb^n_F)\cap \bddu(a_Ec_G^{n-1})$, so since $d^n_H \equiv^\Las_{a_Eb^n_F} d'_H$, we must also have $d'_H \in \bddu(a_Eb^n_F)\cap \bddu(a_Ec_G^{n-1}) = \bddu(a_Eb_F)\cap\bddu(a_Ec_G)$, which is a contradiction.
\end{proof}

\section{Full existence}
\noindent We will use the tree bookkeeping machinery from \cite{KaplanRamseyOnKim}, with some minor extensions (the notation $\Tc_\alpha^\ast$ and $\Fc_\alpha$).  

\begin{defn}
  For any ordinal $\alpha$, $\Lc_{s,\alpha}$ is the language 
  \[
  \{\tleq,\wedge,<_{lex},P_0,P_1,\dots,P_{\beta}(\beta<\alpha),\dots\},
  \]
   with $\tleq$ and $<_{lex}$ binary relations, $\wedge$ a binary function, and each $P_\beta$ a unary relation.
  
   For any ordinal $\alpha$, we write $\Tc_\alpha^\ast$ for the set of functions $f$ with codomain $\omega$ and finite support such that $\dom(f)$ is an end segment of $\alpha$. We write $\Tc_\alpha$ for the set of functions $f \in \Tc_\alpha^\ast$ with $\dom(f) = [\beta,\alpha)$ and $\beta$ is not a limit ordinal. We write $\Fc_{\alpha+1}$ (for forest) for $\Tc_{\alpha+1}\setminus \{\varnothing\}$. 

  We interpret $\Tc_\alpha^\ast$ and $\Tc_\alpha$ as $\Lc_{s,\alpha}$-structures by
  \begin{itemize}
  \item $f \tleq g$ if and only if $f\subseteq g$;
  \item $f \wedge g = f\res[\beta,\alpha)=g\res [\beta,\alpha)$, where $\beta=\min\{\gamma : f\res [\gamma,\alpha)=g\res [\gamma,\alpha)\}$ (with the understanding that $\min \varnothing = \alpha$);
  \item $f <_{lex} g$ if and only if either $f \lhd g$ or $f$ and $g$ are $\tleq$-incomparable, $\dom(f\wedge g) =[\gamma,\alpha)$, and $f(\gamma) < g(\gamma)$; and
  \item $P_\beta(f)$ holds if and only if $\dom(f) = [\beta,\alpha)$.
  \end{itemize}

  We write $\langle  i  \rangle$ for the element $\{(\alpha,i)\}$ of $\Tc_\alpha^\ast$. For $f \in \Tc_\alpha^\ast$ with $\dom(f) = [\beta+1,\alpha)$ and $i < \delta$, we write $f \frown \langle i\rangle$ for the function $f\cup\{(\beta,i)\}$. In general, we write $\langle i\rangle\frown f$ for the element of $\Tc_{\alpha+1}^\ast$ given by $f \cup \{(\alpha,i)\}$.\footnote{Note that when $\alpha$ is a non-limit ordinal, $\langle i \rangle$ is an element of $\Tc_\alpha$. When $f \in \Tc_\alpha$ and $f \frown \langle i \rangle$ is defined, then $f \frown \langle  i \rangle \in \Tc_\alpha$. Likewise, if $f \in \Tc_{\alpha+1}$, then $\langle i \rangle\frown f \in \Tc_\alpha$.}
  
  For $\alpha < \beta$, we define the canonical inclusion map $\iota_{\alpha\beta} : \Tc_\alpha \to \Tc_\beta$ by $\iota_{\alpha\beta}(f) = f \cup \{(\gamma,0) : \gamma \in \beta \setminus \alpha\}$. (Note that $\iota_{\alpha,\alpha+1}(f)=\langle 0\rangle\frown f$.)

  For $\beta < \alpha$, we write $\zeta_\beta$ for the function whose domain is $[\beta,\alpha)$ with the property that $\zeta_\beta(\gamma) = 0$ for all $\gamma \in [\beta,\alpha)$.
\end{defn}

Given a family $(b_f)_{f \in X}$, we may refer to it briefly as $b_{\in X}$. 

\begin{defn}
  We say that a tree $(b_f)_{f \in \Tc_\alpha}$ is \emph{$s$-indiscernible over $A$} if for any tuples $f_0\dots f_{n-1}$ and $g_0\dots g_{n-1}$ in $\Tc_\alpha$ with $f_0\dots f_{n-1} \equiv^{\mathrm{qf}} g_0\dots g_{n-1}$, $b_{f_0}\dots b_{f_{n-1}}\equiv_A b_{g_0}\dots b_{g_{n-1}}$, where quantifier-free type is in the language $\Lc_{s,\alpha}$. (Note that this does not entail that $b_f$'s on different levels have the same length as tuples.) We also say that $(b_f)_{f \in \Fc_\alpha}$ is $s$-indiscernible over $A$ (for $\alpha$ a non-limit) if it is $s$-indiscernible after adding a root node $b_{\zeta_\alpha} = \varnothing$.
\end{defn}

Given $f \in \Tc_\alpha$, we write $b_{\tgeq f}$ to refer to some fixed enumeration of the set $\{b_g : g \in \Tc_\alpha,~f \tleq g\}$. In particular, we choose this enumeration in a uniform way so that if $(b_f)_{f \in \Tc_\alpha}$ is $s$-indiscernible over $A$, then for any $f$ of successor length, the sequence $(b_{f \frown \langle  i \rangle})_{i<\omega}$ is $A$-indiscernible. When $f$ is an element of $\Tc^\ast_\alpha$, we will also write, by an abuse of notation, $b_{\tgeq f}$ for some fixed enumeration of the set $\{b_g : g \in \Tc_\alpha,~f \subseteq g\}$. One particular example of this will be sequences of the form $(b_{\tgeq \zeta_{\alpha+1}\frown \langle i \rangle})_{i <\omega}$, where $\alpha$ is a limit ordinal. This is essentially the only situation in which we need to consider $\Tc_\alpha^\ast$.

Note that for a limit ordinal $\alpha$, $(b_f)_{f \in \Tc_\alpha}$ is $s$-indiscernible over $A$ if and only if $(b_f)_{f \in \iota_{\beta,\alpha}(\Tc_\beta)}$ is $s$-indiscernible over $A$ for every $\beta < \alpha$.

We will also need the following fact.

\begin{fact}[{Modeling property for $s$-indiscernibles \cite[Thm.~4.3]{KKS2013}}]\label{fact:modeling-for-s-ind}
  Let $X$ be $\Tc_\alpha$ or $\Fc_\alpha$. For any $(b_f)_{f \in X}$ and any set $A$ of \textbf{hyper}imaginaries, there is a family of tuples $(c_f)_{f \in X}$ that is $s$-indiscernible over $A$ and \emph{locally based on $b_{\in X}$} (i.e., for any finite tuple $f_0\dots f_{n-1}$ from $X$ and any neighborhood $U$ of $\tp(c_{f_0}\dots c_{f_{n-1}}/A)$ (in the appropriate type space), there is a tuple $g_0\dots g_{n-1}$ from $X$ such that $f_0\dots f_{n-1} \equiv^{\mathrm{qf}} g_0\dots g_{n-1}$ and $\tp(b_{g_0}\dots b_{g_{n-1}}) \in U$).
\end{fact}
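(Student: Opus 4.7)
The plan is to derive this modeling theorem from a Ramsey-type partition theorem for the tree structure $(\Tc_\alpha,\Lc_{s,\alpha})$, mimicking the classical passage from infinite Ramsey to Ehrenfeucht--Mostowski indiscernibility for sequences. Specifically, the key input I want is that for any coloring of finite tuples of $\Tc_\alpha$ (sharing a common quantifier-free $\Lc_{s,\alpha}$-type) by boundedly many colors, there is a subcopy of $\Tc_\alpha$ inside $\Tc_\alpha$ on which the coloring is constant on each qf-type orbit. For non-limit $\alpha$ this is essentially Milliken's tree theorem, suitably adapted to the extra structure (the lex order $<_{\mathrm{lex}}$ and the level predicates $P_\beta$); for limit $\alpha$ one passes to the limit along the filtration $\Tc_\alpha=\bigcup_{\beta<\alpha}\iota_{\beta,\alpha}(\Tc_\beta)$, using that $s$-indiscernibility of the whole tree over $A$ is equivalent to $s$-indiscernibility of each $\iota_{\beta,\alpha}(\Tc_\beta)$ over $A$.

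First I would recast the conclusion as a compactness statement. Introduce fresh variables $y_f$ for $f\in X$ and let $\Sigma(y_{\in X})$ be the partial type over $A$ asserting (i) equality of types on qf-equivalent tuples, and (ii) for every finite tuple $\bar f$ from $X$, that $\tp(y_{\bar f}/A)$ lies in the closure of $\{\tp(b_{\bar g}/A):\bar g\equiv^{\mathrm{qf}}\bar f\}$. A realization of $\Sigma$ is precisely an $s$-indiscernible family locally based on $b_{\in X}$. To establish finite satisfiability, a finite fragment mentions only finitely many positions of a fixed quantifier-free pattern together with finitely many approximation conditions; applying the Ramsey theorem for $\Tc_\alpha$ to the coloring that sends an embedding of this pattern to the corresponding type of $b$-tuples produces a monochromatic subcopy of $\Tc_\alpha$ inside $\Tc_\alpha$, and the image of the pattern under this subcopy witnesses the fragment.

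The main obstacle is the tree Ramsey theorem in the enriched language $\Lc_{s,\alpha}$. The cleanest route is to invoke it as a black box (which is exactly what is done in the cited paper and why this statement is quoted rather than reproved), but a direct argument proceeds by iterated Erd\H{o}s--Rado on levels: first stretch $(b_f)_{f\in X}$ via compactness to a tree indexed by a sufficiently wide and tall $\Tc'$ carrying many disjoint copies of each qf-pattern of $\Tc_\alpha$, then proceed top-down through the levels, thinning each set of siblings (over any fixed ancestry profile) to an $A$-indiscernible subsequence via \cref{fact:ER-ind-seq}. Since the linear order on each sibling set is precisely $<_{\mathrm{lex}}$ and the wedge of any tuple is determined by ancestry data together with within-level information, level-wise sequence-indiscernibility assembles into full $\Lc_{s,\alpha}$-quantifier-free indiscernibility. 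For limit $\alpha$ one must carry out these thinnings coherently across all $\beta<\alpha$, which is handled by over-stretching by a cardinal exceeding $\beth_{(2^{|A|+|T|+|\alpha|})^+}$ and extracting by a diagonal/inverse-limit argument; hyperimaginary parameters cause no additional trouble because \cref{fact:ER-ind-seq} already permits an arbitrary parameter set $A$.
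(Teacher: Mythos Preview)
The paper does not prove this statement; it is quoted as a fact from \cite{KKS2013}, and the only argument supplied is the short paragraph immediately following it: pass to a discretization of the (continuous/hyperimaginary) theory, apply the discrete-logic modeling theorem of \cite{KKS2013} there, and transport the resulting $s$-indiscernible family back. Your proposal instead sketches a from-scratch proof of the underlying result, so the two are not really comparable as proofs of \emph{this} paper's contribution; you are reproving the cited black box.

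That said, your outline is sound in its first half. The reduction to finite satisfiability of a type $\Sigma(y_{\in X})$ and the appeal to a structural Ramsey theorem for finite $\Lc_{s,\alpha}$-trees is exactly the mechanism behind \cite{KKS2013}. Calling the Ramsey input ``essentially Milliken'' is a little loose: because the predicates $P_\beta$ are in the language, qf-embeddings must preserve levels pointwise, so one needs a Ramsey statement for level-preserving tree embeddings rather than Milliken's strong-subtree theorem as usually stated; this is what \cite{KKS2013} actually establishes. Your handling of hyperimaginaries is also different from the paper's: rather than arguing that compactness and \cref{fact:ER-ind-seq} go through unchanged, the paper sidesteps the issue entirely via discretization.

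Your second route, iterated Erd\H{o}s--Rado level-by-level, has a genuine gap as written. \cref{fact:ER-ind-seq} does not produce a \emph{sub}sequence of the given siblings; it produces a new sequence locally based on them. After thinning the siblings above a node $f$ this way, the resulting elements need not live in the stretched tree at all, so there is no obvious way to continue ``top-down'' to the next level while keeping a coherent tree structure. One can repair this by replacing the Erd\H{o}s--Rado extraction with an actual pigeonhole/Ramsey argument on sibling sets (which \emph{does} yield sub-branches), but at that point you are back to the tree Ramsey theorem and the first route.
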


Note that while \cref{fact:modeling-for-s-ind} is normally formulated for discrete logic, the corresponding statement in continuous logic (and therefore also for hyperimaginaries) follows easily from a very soft general argument: Given a metric structure $M$ and a tree $(b_f)_{f \in X}$ of elements of $M$, pass to a discretization of the theory and apply \cite[Thm.~4.3]{KKS2013} there. Then take the resulting $s$-indiscernible family back to the original continuous theory. (See \cite{1905.10673} for details regarding this kind of construction.)

Now we are ready to prove full existence for $\indbu$, but we will take the opportunity to prove a certain technical strengthening which we will need later, in the construction of $\indbu$-Morley trees.

\begin{lem}\label{lem:full-existence-technical}
  If $(b_f)_{f \in \Tc_\alpha}$ is a tree of real elements that is $s$-indiscernible over a set of \textbf{hyper}imaginaries $A$, then there is a $\gamma > \alpha$ and a tree $(e_f)_{f \in \Tc_{\gamma+1}}$ such that
  \begin{itemize}
  \item $e_{\in \Tc_{\gamma+1}}$ is $s$-indiscernible over $A$,
  \item for each $f \in \Tc_\alpha$, $b_f = e_{\iota_{\alpha,\gamma+1}(f)}$, and
  \item $e_{\tgeq \zeta_\alpha} \indbu_A e_{\tgeq h}$, where $\dom(h) = [\alpha,\gamma+1)$, $h(\delta)=0$ for all $\delta \in [\alpha,\gamma)$, and $h(\gamma)=1$. 
  \end{itemize}
  (Note that $e_{\tgeq \zeta_\alpha}$ is the original tree.)
\end{lem}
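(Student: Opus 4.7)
The plan is to construct $(e_f)_{f \in \Tc_{\gamma+1}}$ by transfinite recursion on its height, using the modeling property for $s$-indiscernibles \cref{fact:modeling-for-s-ind} at each step, and then establish the independence via a pigeonhole argument on bounded-arity ultraimaginaries together with the cardinality bound that follows \cref{prop:explicit-bound}.

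\emph{Construction.} For each ordinal $\beta \geq \alpha$, I would build inductively an $s$-indiscernible (over $A$) tree $(e^\beta_f)_{f \in \Tc_{\beta+1}}$ satisfying $e^\beta_{\iota_{\alpha,\beta+1}(f)} = b_f$ for every $f \in \Tc_\alpha$. At a successor stage $\beta \to \beta+1$, take a long $A$-indiscernible sequence of conjugates of $e^\beta$ (of length exceeding the bound in \cref{fact:ER-ind-seq}), arrange them as $\omega$ many sibling subtrees under a new root layer, and apply \cref{fact:modeling-for-s-ind} with base parameters $A \cup b_{\in \Tc_\alpha}$ to produce an $s$-indiscernible $\Tc_{\beta+2}$-tree locally based on this configuration. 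Since one of the resulting siblings is $Ab_{\in \Tc_\alpha}$-conjugate to $e^\beta$, a final $A$-automorphism realigns it to make $e^{\beta+1}_{\iota_{\alpha, \beta+2}(f)} = b_f$ hold literally. At limit stages, take the direct limit: $s$-indiscernibility at a limit is implied by $s$-indiscernibility on every embedded $\iota_{\beta',\beta}(\Tc_{\beta'})$-subtree for $\beta' < \beta$, which the inductive hypothesis supplies.

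\emph{Independence.} By \cref{prop:card-bound}, it suffices to show $\bddu_\kappa(Ae_{\tgeq \zeta_\alpha}) \cap \bddu_\kappa(Ae_{\tgeq h}) = \bddu_\kappa(A)$, where $\kappa := |A| + |b_{\in \Tc_\alpha}| + |T|$. Suppose $c_G$ lies in this intersection. For each $i < \omega$, let $h^i$ be the path with $h^i(\gamma) = i$ and $h^i(\delta) = 0$ for $\delta \in [\alpha,\gamma)$, so $h^0 = \zeta_\alpha$ and $h^1 = h$. By $s$-indiscernibility, there is an $A$-automorphism $\tau_i$ fixing $e_{\tgeq h^0}$ pointwise and sending $e_{\tgeq h^1}$ to $e_{\tgeq h^i}$. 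Setting $c_G^i := \tau_i \cdot c_G$, we obtain $c_G^i \in \bddu_\kappa(Ae_{\tgeq h^0}) \cap \bddu_\kappa(Ae_{\tgeq h^i})$. The cardinality bound on arity-$\kappa$ bounded ultraimaginaries (the corollary to \cref{prop:explicit-bound}) forces only boundedly many distinct $c_G^i$, so a coincidence $c_G^i = c_G^j$ with $i\neq j$ must occur; propagating this through $s$-indiscernibility (swapping siblings) places $c_G$ in $\bigcap_{i<\omega}\bddu_\kappa(Ae_{\tgeq h^i})$. Since $(e_{\tgeq h^i})_{i<\omega}$ is $A$-indiscernible, the long-sequence criterion of \cref{prop:explicit-bound} then forces $c_G \in \bddu_\kappa(A)$, as desired.

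\emph{Main obstacle.} The delicate step is the pigeonhole-plus-stretching in the independence argument. Because $\indbu$ has no finite character, one cannot argue finitely but must work with the full bounded-arity class of ultraimaginaries over $Ab_{\in\Tc_\alpha}$, using the global cardinality bound to make a coincidence unavoidable and then converting that coincidence, via the $s$-indiscernible tree structure and \cref{prop:explicit-bound}, into full boundedness over $A$. Choosing $\gamma$ large enough to accommodate both the sibling pigeonhole and the subsequent stretching requires a careful bookkeeping of the two cardinal thresholds involved.
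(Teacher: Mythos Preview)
Your independence argument has a genuine gap. As written, it would show that for \emph{any} $A$-indiscernible sequence $(e^i)_{i<\omega}$ one has $e^0 \indbu_A e^1$, which is false. Take $T$ to be the theory of an equivalence relation with infinitely many infinite classes, $A=\varnothing$, and $(e^i)_{i<\omega}$ a non-constant indiscernible sequence of elements all lying in the same class $C$. Then $C$ (as an imaginary) is in $\bddu(e^0)\cap\bddu(e^1)\setminus\bddu(\varnothing)$. Running your pigeonhole: each $\tau_i$ fixes $e^0$ and hence fixes $C$, so $c_G^i=C$ for every $i$; the coincidence is automatic and you correctly get $C\in\bigcap_i\bddu(e^i)$, but this does not force $C\in\bddu(\varnothing)$. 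The final appeal to \cref{prop:explicit-bound} is in the wrong direction: that proposition detects $c_G\notin\bddu(A)$ via an indiscernible sequence of pairwise $G$-\emph{inequivalent} conjugates of $c$, not via an indiscernible sequence of parameters over each of which $c_G$ happens to be bounded. Note too that your argument never actually uses the height: the whole pigeonhole takes place among the top-level siblings $h^i$, so ``choosing $\gamma$ large enough'' does no work.

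The paper's construction is not blind in this way; it interleaves building and checking. At each successor stage one asks whether some $d_E\in\bddu_\lambda(Ab_{\in\Tc_\alpha})\setminus\bddu_\lambda(A)$ still has the current forest $s$-indiscernible over $Ad$. If so, $d$ is recorded as the new root $e_{\zeta_{\beta+1}}$, and the next level is produced from automorphisms chosen specifically so that the resulting siblings are pairwise $E$-\emph{in}equivalent. This makes $\beta\mapsto[e_{\zeta_{\beta+1}}]_{E_{\beta+1}}$ an injection into the bounded set $\bddu_\lambda(Ab_{\in\Tc_\alpha})\setminus\bddu_\lambda(A)$, so the process halts at some $\gamma$; halting means precisely that no obstructing $d_E$ remains, and that is what delivers $e_{\tgeq\zeta_\alpha}\indbu_A e_{\tgeq h}$. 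The idea you are missing is exactly this: each new level must be built to kill a specific obstructing ultraimaginary, not merely to extend by a generic indiscernible sequence of conjugates.
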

\begin{proof}
  If $b_{\in \Tc_\alpha} \in \acl(A)$, then the statement is trivial, so assume that $b_{\in \Tc_\alpha}\notin \acl(A)$.
  
  Fix $\lambda = |Ab_{\in \Tc_\alpha}|+|T|$.  By \cref{prop:card-bound}, we have that that $b_{\in \Tc_{\alpha}} \indbu_A c$ if and only if $\bddu_\lambda(Ab_{\in \Tc_{\alpha}})\cap \bddu_\lambda(Ac) = \bddu_\lambda(A)$ for any $c$. Let $\mu = |\bddu_\lambda(Ab_{\in \Tc_{\alpha}})\setminus \bddu_\lambda(A)|$.

  We will build a family $(e_f : f \in \iota_{\gamma+1,\mu^+}(\Tc_{\gamma+1}))$ inductively, where $\gamma$ is some successor ordinal less than $\mu^+$. By an abuse of notation, we will systematically conflate the sets $\iota_{\alpha\mu^+}(\Tc_\alpha)$ and $\Tc_\alpha$ (and likewise for $\iota_{\alpha\mu^+}(\Fc_{\alpha+1})$ and $\Fc_{\alpha+1}$) for all $\alpha <\mu^+$.\footnote{Note that the $\zeta_\beta$ and $f\frown \langle i\rangle$ notations are consistent with this conflation but also that the $\langle i \rangle\frown f$ notation is \emph{not}.} Note that in general this will mean that $e_{\tgeq \zeta_\beta}$ is the same thing as $e_{\in \Tc_{\beta}}$. 

  Let $e_{f}=b_f$ for all $f \in \Tc_\alpha$. Since $b_{\in \Tc_\alpha} \notin \acl(A)$, we can find a family $(d_{f})_{f \in \Fc_{\alpha+1}}$ extending $e_{\in \Tc_\alpha}$ such that $(d_{\tgeq \zeta_{\alpha+1}\frown \langle i \rangle})_{i<\omega}$ is a non-constant $A$-indiscernible sequence. By \cref{fact:modeling-for-s-ind}, we can define $e_f$ for all $f \in \Fc_{\alpha+1}$ in such a way that the family $e_{\in \Fc_{\alpha+1}}$ is locally based on $d_{\in \Fc_{\alpha+1}}$. In particular, $(e_{\tgeq \zeta_{\alpha+1}\frown \langle i \rangle})_{i < \omega}$ will be a non-constant $A$-indiscernible sequence.

  At successor stage $\beta+1 \geq \alpha$, assume that we have defined $e_{f}$ for all $f \in \Fc_{\beta+1}$ and that the family $(e_f)_{f \in \Fc_{\beta+1}}$ is $s$-indiscernible over $A$. If there is no $d_E \in \bddu_\lambda(Ab_{\in \Tc_\alpha})\setminus \bddu_\lambda(A)$ such that the family $(e_f)_{f \in \Fc_{\beta+1}}$ is $s$-indiscernible over $Ad$, let $e_{\zeta_{\beta+1}} = \varnothing$ and $\gamma = \beta$ and halt the construction. Otherwise, let $e_{\zeta_{\beta+1}} = d$. For later reference, let $E_{\beta+1}$ be $E$. Note that the family $e_{\in \Tc_{\beta+1}}$ is $s$-indiscernible over $A$. Since $d_E \notin \bddu_\lambda(A)$, we can find, by \cref{prop:explicit-bound}, a sequence $(\sigma_i)_{i<\omega}$ of elements of $\Aut(\Mb/A)$ such that for each $i<j<\omega$, $(\sigma_i \cdot d)\slashed{E}_{\beta+1}(\sigma_j\cdot d)$. Now chose $(e_f)_{f \in \Fc_{\beta+2}}$ in such a way that $e_{\in \Fc_{\beta+2}}$ extends what was already defined, is $s$-indiscernible over $A$, and is locally based on the family $(c_f)_{f \in \Fc_{\beta+2}}$ defined by $c_{\langle i \rangle \frown f}=\sigma_i \cdot e_f$ for all $f \in \Tc_{\beta+1}$ (which is possible by \cref{fact:modeling-for-s-ind}). In particular, note that for any $i<j<\omega$, we still have that $(e_{\zeta_{\beta+2}\frown \langle i \rangle},e_{\zeta_{\beta+2}\frown \langle j \rangle}) \equiv_A (d,\sigma_1\cdot d)$ and so, in particular, $e_{\zeta_{\beta+2}\frown \langle i \rangle}\slashed{E}_{\beta+1}e_{\zeta_{\beta+2}\frown \langle j \rangle}$.

  At limit stage $\beta$, given $e_{\in \Tc_{\beta}}$, note that this family is automatically $s$-indiscernible over $A$. Extend it to a family $e_{\in \Fc_{\beta+1}}$ that is $s$-indiscernible over $A$. (This is always possible.)
  
  \vspace{0.5em}  
  \noindent \emph{Claim.} For any $\beta < \delta < \mu^+$, if $E_{\beta+1} = E_{\delta+1}$, then $e_{\zeta_{\beta+1}}\slashed{E}_{\beta+1}e_{\zeta_{\delta+1}}$.
  \vspace{0.25em}

  \noindent \emph{Proof of claim.} The sequence $(e_{\zeta_{\beta+2}\frown \langle i \rangle})_{i<\omega}$ is $e_{\zeta_{\delta+1}}$-indiscernible. Since
  \[
    e_{\zeta_{\beta+2}\frown \langle 0 \rangle}\slashed{E}_{\beta+1}e_{\zeta_{\beta+2}\frown \langle 1 \rangle},
  \]
   it must be the case that $e_{\zeta_{\delta+1}}\slashed{E}_{\beta+1}e_{\zeta_{\beta+2}\frown \langle  i \rangle}$ for all $i<\omega$. \hfill $\square_{\text{claim}}$
   
   \vspace{0.5em}  
   Let $g$ be the partial function taking $\beta$ to $[e_{\zeta_{\beta+1}}]_{E_{\beta+1}}$. By the claim, this is an injection into $\bddu_\lambda(Ab_{\in \Tc_\alpha})\setminus \bddu_\lambda(A)$. By the choice of $\mu$, $g$'s domain cannot be cofinal in $\mu^+$, so the construction must have halted at some $\gamma < \mu^+$. 

   Extend $e_{\in \Tc_\gamma}$ to $e_{\in \Fc_{\gamma+1}}$ in such a way that the resulting family is $s$-indiscernible over $A$. Set $e_{\zeta_{\gamma+1}} = \varnothing$. Let $h$ be the function with domain $[\alpha,\gamma+1)$ such that $h(\delta) = 0$ for all $\delta \in [\alpha,\gamma)$ and $h(\gamma) = 1$.

   \vspace{0.5em}  
   \noindent \emph{Claim.} For any $d_E \in \bddu_\lambda(Ae_{\tgeq \zeta_\alpha}) \setminus \bddu_\lambda(A)$, $d_E \notin \bddu_\lambda(Ae_{\tgeq h})$.
   \vspace{0.25em}

   \noindent \emph{Proof of claim.} Assume that there is some $d_E \in  (\bddu_\lambda(Ae_{\tgeq \zeta_\alpha})\cap \bddu_\lambda(Ae_{\tgeq h})) \setminus \bddu_\lambda(A)$. By basic properties of $s$-indiscernibility, $e_{\in \Tc_\gamma}$ is $s$-indiscernible over $Ae_{\tgeq h}$.  By \cref{fact:modeling-for-s-ind}, we can find a tuple $e' \equiv_A e_{\in\Tc_\gamma}$ such that $e_{\in \Fc_{\gamma+1}}$ is $s$-indiscernible over $Ae'$. We can also find a model $M \supseteq Ae'$ with $|M|=\lambda$ such that $e_{\in \Fc_{\gamma+1}}$ is $s$-indiscernible over $M$. By \cref{cor:shrink-ultra}, there is an invariant equivalence relation $F$ such that $[M]_F$ and $d_E$ are interdefinable over $\varnothing$. This implies that $[M]_F \in \bddu_\lambda(Ae_{\tgeq \zeta_\alpha})\setminus \bddu_\lambda(A)$, but this contradicts the fact that the construction halted at step $\gamma$. \hfill $\square_{\text{claim}}$

   \vspace{0.5em}
   So, by the claim, we have that $\bddu_\lambda(Ae_{\tgeq \zeta_\alpha})\cap \bddu_\lambda(Ae_{\tgeq h}) = \bddu_\lambda(A)$. Therefore, by the choice of $\lambda$, $e_{\tgeq \zeta_\alpha}\indbu_A e_{\tgeq h}$, as required.
\end{proof}

\begin{thm}[Full existence]\label{thm:full-existence}
  For any set of hyperimaginaries $A$ and real tuples $b$ and $c$, there is $b' \equiv_A b$ such that $b' \indbu_A c$.
\end{thm}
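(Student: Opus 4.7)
The plan is to derive full existence directly from the tree-level \cref{lem:full-existence-technical} by applying it to a carefully chosen $s$-indiscernible initial tree that encodes both $b$ and $c$. The key feature of the lemma is that its output contains a ``sibling'' subtree $e_{\tgeq h}$ that is $\indbu$-independent from the original subtree $e_{\tgeq \zeta_\alpha}$ over $A$, while $s$-indiscernibility simultaneously matches the types of corresponding nodes across the two subtrees; the desired $b'$ will sit at the appropriate position inside $e_{\tgeq h}$.

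Concretely, I would take $\alpha = 1$ and consider the tree $(b_f)_{f \in \Tc_1}$ defined by placing $c$ at the root and $b$ at every child: $b_{\zeta_1} = c$ and $b_{\langle i \rangle} = b$ for all $i < \omega$. Because every $n$-tuple of nodes from $\Tc_1$ yields a tuple of values that depends only on which levels occur and not on the indices themselves, this tree is trivially $s$-indiscernible over $A$. Applying \cref{lem:full-existence-technical} then produces an $s$-indiscernible extension $(e_f)_{f \in \Tc_{\gamma+1}}$ over $A$, agreeing with the original tree along the embedding $\iota_{1,\gamma+1}$ and satisfying $e_{\tgeq \zeta_1} \indbu_A e_{\tgeq h}$ for the specific sibling node $h$ described there.

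Now set $b' = e_{h \frown \langle 0 \rangle}$. Both $h \frown \langle 0 \rangle$ and $\iota_{1,\gamma+1}(\langle 0 \rangle)$ are leaves of the extended tree with domain $[0, \gamma+1)$, so they share a quantifier-free $\Lc_{s,\gamma+1}$-type, and $s$-indiscernibility over $A$ forces $b' \equiv_A e_{\iota_{1,\gamma+1}(\langle 0 \rangle)} = b$. Meanwhile $c = e_{\zeta_1}$ is a sub-element of $e_{\tgeq \zeta_1}$ and $b'$ is a sub-element of $e_{\tgeq h}$, so monotonicity and symmetry of $\indbu$ promote the lemma's tree-level independence to $b' \indbu_A c$. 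The real obstacle---building an $s$-indiscernible tree extension in which a controlled $\indbu$-independent copy is genuinely produced---has already been handled inside \cref{lem:full-existence-technical}, so the work here reduces to choosing an initial tree whose shape places $b$ and $c$ on different levels and then reading off the right node from the lemma's output.
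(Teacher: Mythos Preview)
Your argument is correct. Both your proof and the paper's deduce the theorem from \cref{lem:full-existence-technical} and then use monotonicity, but they feed the lemma different initial trees. The paper first observes that it suffices to treat the diagonal case $b=c$ (since from $d'e' \equiv_A de$ with $d'e' \indbu_A de$ one extracts $d' \indbu_A e$ by monotonicity), and then applies the lemma to the one-point tree $\Tc_0$ with $b_\varnothing = b$; an automorphism at the end swaps the roles so that the conjugate lands over the given $c=b$. You instead skip the reduction and encode both elements at once, placing $c$ at the root and $b$ at the leaves of $\Tc_1$, so that the original subtree $e_{\tgeq \zeta_1}$ already contains $c$ and the sibling subtree $e_{\tgeq h}$ supplies a conjugate $b'$ of $b$ directly. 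Your route is marginally more direct and avoids the final automorphism move; the paper's route keeps the tree as small as possible. Neither gains anything substantive over the other.
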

\begin{proof}
  It is sufficient to show this in the special case that $b=c$. Specifically, given $d$ and $e$, if we can find $d'e'\equiv_A de$ such that $d'e'\indbu_A de$, then we have $d' \indbu_A e$ by monotonicity. So fix a set of hyperimaginaries $A$ and a real tuple $b$. We can now apply \cref{lem:full-existence-technical} to the family $(b_f)_{f \in \Tc_0}$ with $b_\varnothing = b$ to get a family $(e_f)_{f \in \Tc_{\gamma+1}}$ such that $e_{\zeta_0}=b$ for some $f \in \Tc_{\gamma+1}$, $b \equiv_A b_f$ and $b \indbu_A b_f$. Let $\sigma$ be an automorphism fixing $A$ taking $b_f$ to $b$. We then have that $b' = \sigma \cdot b$ is the required element, and we are done.
\end{proof}

\begin{cor}\label{cor:full-existence-with-ultra}
  For any set of hyperimaginaries $A$ and any ultraimaginaries $b_E$ and $c_F$, there is $b'_E \equiv_A b_E$ such that $b'_E \indbu_A c_F$.
\end{cor}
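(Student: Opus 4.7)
The plan is to reduce immediately to the real-tuple case handled by \cref{thm:full-existence} and then push the resulting independence down to the ultraimaginary level using a one-line consequence of the definition of $\indbu$. First I would pick real tuples $b$ and $c$ (of whatever arities $E$ and $F$ demand) such that $b_E = [b]_E$ and $c_F = [c]_F$. Since \cref{thm:full-existence} imposes no cardinality restriction on its real tuples, it supplies some $b' \equiv_A b$ with $b' \indbu_A c$.

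Next, I would transport $E$ along the automorphism witnessing $b' \equiv_A b$: any $\sigma \in \Aut(\Mb/A)$ with $\sigma \cdot b = b'$ satisfies $\sigma \cdot b_E = b'_E$ by invariance of $E$, so in particular $b'_E \equiv_A b_E$. It then remains to upgrade $b' \indbu_A c$ to $b'_E \indbu_A c_F$. Since $b'_E$ is definable over $b'$ and $c_F$ is definable over $c$, we have $\bddu(Ab'_E) \subseteq \bddu(Ab')$ and $\bddu(Ac_F) \subseteq \bddu(Ac)$; intersecting and invoking $\bddu(Ab') \cap \bddu(Ac) = \bddu(A)$ yields
\[
\bddu(A) \;\subseteq\; \bddu(Ab'_E) \cap \bddu(Ac_F) \;\subseteq\; \bddu(Ab') \cap \bddu(Ac) \;=\; \bddu(A),
\]
which is exactly $b'_E \indbu_A c_F$.

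There is no genuine obstacle at this stage: all the real work is absorbed into \cref{thm:full-existence}, and the lifting step is just the observation that $\bddu$-intersections respect $\dclu$-containment, which is an immediate consequence of the definitions. The only thing to double-check is that the arity of the real representatives $b$ and $c$ causes no issue, but \cref{thm:full-existence} is stated for arbitrary real tuples, so this is automatic.
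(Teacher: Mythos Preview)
Your proposal is correct and is essentially the paper's own argument: both apply \cref{thm:full-existence} to real representatives $b$ and $c$, then push the resulting independence down to $b'_E$ and $c_F$. The only cosmetic difference is that the paper packages your containment chain $\bddu(Ab'_E)\cap\bddu(Ac_F)\subseteq\bddu(Ab')\cap\bddu(Ac)=\bddu(A)$ as an appeal to monotonicity, whereas you spell it out directly.
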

\begin{proof}
  Apply \cref{thm:full-existence} to $b$ and $c$ to get $b' \equiv_A b$ such that $b'\indbu_A c$. We then have that $\bddu(b') \indbu_A \bddu(c)$, so by monotonicity, $b'_E \indbu_A c_F$.
\end{proof}

\begin{cor}[Extension]\label{cor:extension}
  For any set of hyperimaginaries $A$ and any ultraimaginaries $b_E$, $c_F$, and $d_G$, if $b_E \indbu_A c_F$, then there is $b'_E \equiv_{Ac_F} b_E$ such that $b'_E \indbu_A c_Fd_G$. 
\end{cor}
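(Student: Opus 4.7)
The plan is to derive extension as an immediate consequence of \cref{cor:full-existence-with-ultra} by an automorphism argument. Observe first that the sought conclusion is equivalent, via an $Ac_F$-automorphism, to producing some $d'_G \equiv_{Ac_F} d_G$ with $b_E \indbu_A c_Fd'_G$: once such a $d'_G$ is constructed, any $\tau \in \Aut(\Mb/Ac_F)$ sending $d'_G$ to $d_G$ will carry $b_E$ to the required $b'_E$. So the task is to produce this $d'_G$.

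To this end, I would apply \cref{cor:full-existence-with-ultra} with the hyperimaginary base $A$ to the ultraimaginary $b_E$ and the composite ultraimaginary $c_Fd_G$, obtaining $(c_Fd_G)^* = c^*_Fd^*_G \equiv_A c_Fd_G$ with $c^*_Fd^*_G \indbu_A b_E$; symmetry then gives $b_E \indbu_A c^*_Fd^*_G$. Since $c^*_F \equiv_A c_F$, choose $\sigma \in \Aut(\Mb/A)$ with $\sigma(c^*_F) = c_F$ and set $d'_G := \sigma(d^*_G)$. Then $c_Fd'_G = \sigma(c^*_Fd^*_G) \equiv_A c_Fd_G$, so in particular $d'_G \equiv_{Ac_F} d_G$, and by invariance applied to $b_E \indbu_A c^*_Fd^*_G$ we get $\sigma(b_E) \indbu_A c_Fd'_G$.

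To finish, we need the independence to hold with $b_E$ in place of $\sigma(b_E)$. This reduces to showing $\sigma(b_E) \equiv_{Ac_F} b_E$, after which any element of $\Aut(\Mb/Ac_F)$ sending $\sigma(b_E)$ to $b_E$ (and hence also sending $d'_G$ to some $d''_G \equiv_{Ac_F} d_G$) gives the desired conclusion. We know $\sigma(b_E) \equiv_A b_E$ and have $\sigma(b_E) \indbu_A c_F$ (by monotonicity from the previous step) together with $b_E \indbu_A c_F$ (by hypothesis). The main obstacle is the type-equality $\sigma(b_E) \equiv_{Ac_F} b_E$, a stationarity-like property for $\indbu$. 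I expect it to follow from the automorphism-group characterization \cref{prop:ind-bu-char}(3): the decomposition $\Autf(\Mb/A) = \langle \Autf(\Mb/Ab_E), \Autf(\Mb/Ac_F) \rangle$ afforded by the hypothesis should allow any $A$-automorphism sending $b_E$ to $\sigma(b_E)$ to be rewritten as a product whose net effect fixes $c_F$, at least modulo an element of $\Aut(\Mb/Ac_F)$, which is precisely what is needed.
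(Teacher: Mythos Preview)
The proposal has a genuine gap in its final step. The ``stationarity-like property'' you invoke---that $\sigma(b_E) \equiv_A b_E$, $\sigma(b_E) \indbu_A c_F$, and $b_E \indbu_A c_F$ together force $\sigma(b_E) \equiv_{Ac_F} b_E$---is simply false for $\indbu$, as it is for essentially every nontrivial independence relation. For a concrete counterexample, work in the random graph with $A = \varnothing$ and real elements: take distinct vertices $b$, $b'$, $c$ with $b$ adjacent to $c$ and $b'$ not. Then $b \equiv_\varnothing b'$, and both $b \indbu_\varnothing c$ and $b' \indbu_\varnothing c$ hold (non-dividing implies $\indbu$ by the paper's own result, and any two distinct vertices are non-forking-independent over $\varnothing$), yet $b \not\equiv_c b'$. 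The characterization in \cref{prop:ind-bu-char}(3) does not rescue the argument: the identity $\Autf(\Mb/A) = \langle \Autf(\Mb/Ab_E), \Autf(\Mb/Ac_F)\rangle$ says only that elements of $\Autf(\Mb/A)$ factor as alternating products of automorphisms fixing $b_E$ and automorphisms fixing $c_F$. It does not say that some automorphism taking $b_E$ to $\sigma(b_E)$ can be chosen to fix $c_F$; in the random-graph example no such automorphism exists. Note too that your $\sigma$ is only known to lie in $\Aut(\Mb/A)$, not in $\Autf(\Mb/A)$, so the decomposition need not even apply to it.

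The paper's route avoids this entirely by applying full existence over the \emph{larger} base $Ac_F$ rather than over $A$: one obtains $b'_E \equiv_{Ac_F} b_E$ with $b'_E \indbu_{Ac_F} d_G$ directly. The hypothesis $b_E \indbu_A c_F$ then transfers to $b'_E \indbu_A c_F$ via the $Ac_F$-type equality, and symmetry together with transitivity combine $b'_E \indbu_A c_F$ and $b'_E \indbu_{Ac_F} d_G$ to give $b'_E \indbu_A c_Fd_G$. The key conceptual point you missed is that the hypothesis is meant to be consumed by \emph{transitivity}, not by a stationarity property that $\indbu$ does not possess.
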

\begin{proof}
 By \cref{cor:full-existence-with-ultra}, we can find $b'_E \equiv_{Ac_F} b$ such that $b'_E \indbu_{Ac_F} d_G$. By symmetry and transitivity, we have that $b'_E \indbu_{A}c_Fd_G$.   
\end{proof}

Compactness is very essential in the proof of \cref{fact:modeling-for-s-ind} and therefore also \cref{thm:full-existence}, which raises the following question.

\begin{quest}
  Does \cref{thm:full-existence} hold when $A$ is a set of ultraimaginaries?
\end{quest}

\section{Total \texorpdfstring{$\indbu$}{bu}-Morley sequences}

\begin{defn}
  A \emph{$\indbu$-Morley sequence over $A$} is an $A$-indiscernible sequence $(b_i)_{i<\omega}$ such that $b_i \indbu_A b_{<i}$ for each $i<\omega$.

  A \emph{weakly total $\indbu$-Morley sequence over $A$} is an $A$-indiscernible sequence $(b_i)_{i<\omega}$ such that for any finite $I$ and any $J$ (of any order type), if $I+J \equiv^{\EM}_{A} b_{<\omega}$, then $I \indbu_A J$.

  A \emph{total $\indbu$-Morley sequence over $A$} is an $A$-indiscernible sequence $(b_i)_{i<\omega}$ such that for any $I$ and $J$ (of any order type), if $I+J \equiv^{\EM}_Ab_{<\omega}$, then $I \indbu_A J$.
\end{defn}

We could write down stronger and weaker forms of the $\indbu$-Morley condition, but we are really only interested in total $\indbu$-Morley sequences, as they seem to be a fairly robust class (see \cref{thm:ind-bu-Morley-char}). Weakly total $\indbu$-Morley sequences seem to be the best we can get without large cardinals, however,  which does raise the following question.

\begin{quest}
  Is every weakly total $\indbu$-Morley sequence a total $\indbu$-Morley sequence? 
\end{quest}

One immediate property of total $\indbu$-Morley sequences is that they act as universal witnesses of Lascar strong type in a strong way. 

\begin{prop}\label{prop:Lstp-strong-witness}
  For any $A$ and $b$, if there is a total $\indbu$-Morley sequence $(b_i)_{i<\omega}$ over $A$ with $b_0 = b$, then for any $b'$, $b' \equiv^\Las_A b$ if and only if there are $I_0,J_0,I_1,\dots,J_{n-1},I_n$ such that $b \in I_0$,  $b' \in I_n$, and, for each $i<n$, $I_i + J_i$ and $I_{i+1} +J_i$ are both $A$-indiscernible and have the same EM-type as $b_{<\omega}$.
\end{prop}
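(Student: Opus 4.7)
The plan is to prove the two directions separately.

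For the ($\Leftarrow$) direction (which does not actually require the total Morley hypothesis), I would invoke the standard fact that any two elements of an infinite $A$-indiscernible sequence share the same Lascar strong type over $A$. Applying this to each $I_i+J_i$ and each $I_{i+1}+J_i$ shows that all elements of $I_i$, $J_i$, and $I_{i+1}$ share a single $\equiv^\Las_A$-class. Chaining through $i=0,1,\dots,n-1$ using the common $J_i$'s then gives $b\equiv^\Las_A b'$.

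For the ($\Rightarrow$) direction, first pick $\sigma\in\Autf(\Mb/A)$ with $\sigma\cdot b=b'$ and set $b'_i:=\sigma\cdot b_i$. By the fourth bullet of \cref{thm:ind-bu-Morley-char}, we have $[b_{<\omega}]_{\approx_A}\in\bddu(A)$, which by \cref{prop:Gf-group-char} is equivalent to saying this ultraimaginary is fixed by every element of $\Autf(\Mb/A)$. In particular $\sigma$ fixes it, so $b'_{<\omega}\approx_A b_{<\omega}$. Unwinding the definition of $\approx_A$, there is a finite chain $b_{<\omega}=K_0,K_1,\dots,K_m=b'_{<\omega}$ in which each adjacent pair $(K_i,K_{i+1})$ is connected via an $A$-indiscernible concatenation (in one order or the other). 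Each $K_i$ inherits the EM-type of $b_{<\omega}$ by propagating along the chain, since a subsequence of an $A$-indiscernible sequence is $A$-indiscernible.

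The heart of the argument is then converting this raw $\approx_A$-chain into the required diamond structure. For each step, whether $K_i+K_{i+1}$ or $K_{i+1}+K_i$ is $A$-indiscernible, I would extend it on the right by an $A$-indiscernible continuation $R_i$ of order type $\omega$ (by compactness, or equivalently via the EM-functor applied to the common EM-type). Setting $I_i:=K_i$, $J_i:=R_i$, and $I_{i+1}:=K_{i+1}$, both $I_i+J_i$ and $I_{i+1}+J_i$ appear as subsequences of the extended $A$-indiscernible sequence $K_i+K_{i+1}+R_i$ (respectively $K_{i+1}+K_i+R_i$), so each is itself $A$-indiscernible with the EM-type of $b_{<\omega}$. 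Assembling these steps with $n=m$ gives the required chain, and $b\in K_0=I_0$, $b'\in K_m=I_n$ since $b_0=b$ and $b'_0=b'$.

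The main obstacle is the diamond conversion, specifically having to handle the two possible orientations of the $\approx_A$-step. This is resolved cleanly by the right-extension trick above: after stretching on the right, both $K_i$ and $K_{i+1}$ sit as $A$-indiscernible left-extensions of the common tail $R_i$ regardless of which concatenation was originally indiscernible, so a single construction covers both cases.
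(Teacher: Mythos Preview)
Your argument is correct. The $(\Leftarrow)$ direction is handled as the paper implicitly does (the paper simply says ``we only need to prove'' the forward direction), and your $(\Rightarrow)$ direction is valid, with the diamond-conversion step being exactly the content of \cref{lem:approx-special-witness}.

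However, your route differs from the paper's. The paper argues directly from the \emph{definition} of total $\indbu$-Morley sequence: pick any $J$ with $I+J$ $A$-indiscernible, so $I\indbu_A J$; then, given $I'\equiv^\Las_A I$ containing $b'$, invoke the crab-walk characterization of $\indbu$ (\cref{prop:ind-bu-char}, condition (2)) to obtain $I_0,J_0,\dots,I_n$ with $I_i\equiv^\Las_{AJ_i}I_{i+1}$ and $J_i\equiv^\Las_{AI_{i+1}}J_{i+1}$, from which $A$-indiscernibility of each $I_i+J_i$ and $I_{i+1}+J_i$ follows by propagation from $I_0+J_0$. Your route instead passes through \cref{thm:ind-bu-Morley-char}(4) to get $[b_{<\omega}]_{\approx_A}\in\bddu(A)$, deduces $b'_{<\omega}\approx_A b_{<\omega}$, and then massages a raw $\approx_A$-chain into diamond form. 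This is a forward reference in the paper's ordering (though not circular, since \cref{thm:ind-bu-Morley-char} does not use \cref{prop:Lstp-strong-witness}), and it is more circuitous: you are effectively re-deriving part of the $(1)\Rightarrow(4)\Rightarrow(3)$ chain in \cref{thm:ind-bu-Morley-char} together with \cref{lem:approx-special-witness}, whereas the paper's direct use of \cref{prop:ind-bu-char} gives the diamond structure in one step with no orientation case-split needed.
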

\begin{proof}
  Let $I = (b_i)_{i<\omega}$. We only need to prove that if $b' \equiv^\Las_A b$, then the required configuration exists. Choose $I'$ so that $bI \equiv^\Las_A b'I'$. Then the required configuration exists by \cref{prop:crab-walk-char}. 
\end{proof}
A similar statement is true for weakly total $\indbu$-Morley sequences, which we will state in \cref{cor:Lascar-strong-type-clean-witness} after we have shown that weakly total $\indbu$-Morley sequences always exist without set-theoretic hypotheses.

\subsection{ Characterization of total \texorpdfstring{$\indbu$}{bu}-Morley sequences}

\begin{defn}\label{defn:approx-A}
  For any set of parameters $A$, we write $\approx_A$ for the transitive closure of the relation $I \sim_A J$ that holds if and only if $I$ and $J$ are infinite and either $I+J$ or $J+I$ is an $A$-indiscernible sequence.

  By an abuse of notation, we write $[I]_{\approx_A}$ for the ultraimaginary $[AI]_E$, where $E$ is the equivalence relation on tuples of the same length as $AI$ such that $E(AI,BJ)$ holds if and only if $A=B$ in our fixed enumeration and $I\approx_A J$.
\end{defn}

Note that we do not in general require that $I$ and $J$ have the same order type.

 We will also need an appropriate Lascar strong type generalization of Ehrenfeucht-Mostowski type. 

\begin{defn}\label{defn:LEM-type}
  Given two $A$-indiscernible sequences $I$ and $J$, we say that $I$ and $J$ have the same \emph{Lascar-Ehrenfeucht-Mostowski type} (or \emph{LEM-type}) \emph{over $A$}, written $I \equiv^{\LEM}_A J$, if there is some $J' \equiv^\Las_A J$ such that $I+J'$ is $A$-indiscernible.
\end{defn}

\begin{lem}\label{lem:approx-special-witness}
  For any infinite order types $O$ and $O'$, $I \approx_A J$ if and only if there are $K_0, L_0, K_1, \dots, L_{n-1}, K_n$ such that
  \begin{itemize}
  \item $K_0 = I$ and $K_n = J$,
  \item for $0 < i < n$, $K_i$ is a sequence of order type $O$,
  \item for $i<n$, $L_i$ is a sequence of order type $O'$, and
  \item for $i<n$, $K_i + L_i$ and $K_{i+1} + L_i$ are $A$-indiscernible.
  \end{itemize}
\end{lem}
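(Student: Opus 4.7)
The ($\Leftarrow$) direction is immediate: given a chain of the prescribed form, each indiscernibility $K_i + L_i$ and $K_{i+1} + L_i$ witnesses $K_i \sim_A L_i$ and $K_{i+1} \sim_A L_i$, so $K_i \approx_A K_{i+1}$ via $L_i$ for each $i<n$, and transitivity of $\approx_A$ yields $I = K_0 \approx_A K_n = J$.

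For ($\Rightarrow$), my plan is to first unfold $\approx_A$ as a finite $\sim_A$-chain $I = N_0 \sim_A N_1 \sim_A \cdots \sim_A N_m = J$. The first step is to construct bridging $L$'s: for each $k<m$, one of $N_k + N_{k+1}$ or $N_{k+1} + N_k$ is $A$-indiscernible, and by standard EM-type compactness I can extend the relevant sequence by an $A$-indiscernible tail of order type $O'$, which I call $L^{(k)}$. Since $N_k + L^{(k)}$ and $N_{k+1} + L^{(k)}$ both arise as sub-sequences of the extension, they are $A$-indiscernible.

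The key step is then to produce, for each intermediate index $k$ with $0<k<m$, a replacement sequence $N^\ast_k$ of order type $O$ such that both $N^\ast_k + L^{(k-1)}$ and $N^\ast_k + L^{(k)}$ are $A$-indiscernible. My plan is to do this by compactness: consider the partial type $p(\bar x)$ in variables $\bar x = (x_i)_{i \in O}$ over $A \cup L^{(k-1)} \cup L^{(k)}$ asserting that both $\bar x + L^{(k-1)}$ and $\bar x + L^{(k)}$ are $A$-indiscernible. Any finite restriction of $p$ constrains only finitely many positions of $\bar x$ jointly with finitely many positions of $L^{(k-1)}$ and $L^{(k)}$, and is realized by taking $\bar x$ to be a suitable finite ordered sub-tuple of $N_k$, because $N_k + L^{(k-1)}$ and $N_k + L^{(k)}$ are already $A$-indiscernible by Step 1. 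Hence $p$ is consistent and has a realization $N^\ast_k$.

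Finally, I assemble the chain by setting $n = m$, $K_0 = I$, $K_m = J$, $K_k = N^\ast_k$ for $0 < k < m$, and $L_k = L^{(k)}$ for $k<m$; all the required indiscernibility conditions $K_i + L_i$ and $K_{i+1} + L_i$ hold by Steps 1 and 2 (treating $m=0$ and $m=1$ as degenerate cases in which no intermediate $K$'s are produced). I expect the main conceptual obstacle to be the compactness argument of the third paragraph: one might naively worry that $N^\ast_k$ must simultaneously satisfy indiscernibility requirements with respect to two potentially quite different sequences $L^{(k-1)}$ and $L^{(k)}$, whose joint type over $A$ is not understood. The resolution is that $N_k$ itself already simultaneously witnesses both conditions on every finite sub-tuple, so finite satisfiability is automatic and compactness does the rest.
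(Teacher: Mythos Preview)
Your proof is correct and follows essentially the same approach as the paper: both arguments obtain the $L_i$'s by extending an $A$-indiscernible concatenation with a tail of order type $O'$, and both use compactness (with the original intermediate sequence witnessing finite satisfiability) to replace each intermediate node by one of order type $O$ compatible with its two neighboring $L$'s. The only difference is organizational---the paper runs an induction on the length of the $\sim_A$-chain and performs the compactness replacement one step at a time, whereas you construct all the $L^{(k)}$'s first and then replace all intermediate $N_k$'s in a single pass---but the underlying ideas are identical.
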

\begin{proof}
  The $\Leftarrow$ direction is obvious.

  For the $\To$ direction, we will proceed by induction. First assume that $I \sim_A J$. If $I+J$ is $A$-indiscernible, then find $L$ of order type $O'$ such that $I+J+L$ is $A$-indiscernible. We then have that $I+L$ and $J+L$ are $A$-indiscernible. If $J+I$ is $A$-indiscernible, then find $L$ of order type $O'$ such that $J+I+L$ is $A$-indiscernible. We then have that $I+L$ and $J+L$ are $A$-indiscernible.

  Now assume that we know the statement holds for any $I$ and $J$ such that there is a sequence $I'_0,\dots,I'_n$ with $I'_0 = I$, $I'_n = J$, and $I'_i \sim_A I'_{i+1}$ for each $i<n$. Now assume that there is a sequence $I'_0, \dots, I'_{n+1}$ with $I'_0 = I$, $I'_{n+1} = J$, and $I'_i \sim_A I'_{i+1}$ for each $i\leq n$. Apply the induction hypothesis to get $K_0,L_0,K_1,\dots,L_{m-1},K_m$ satisfying the properties in the statement of the lemma with $K_0 = I$ and $K_m = I'_n$. Now since $I'_n \sim_A I'_{n+1} = J$, we can apply the $n=1$ case to get $L_m$ such that $I'_n + L_m$ and $I'_{n+1} + L_m$ are both $A$-indiscernible. By compactness, we can find $K_m$ of order type $O$ such that $K_m +L_m$ and $K_m+L_{m-1}$ are both $A$-indiscernible. We then have that $K_0,L_0,K_1,\dots,K_m,L_m,K_{m+1}$ is the require sequence, where $K_{m+1}=J$.
  \end{proof}

\begin{prop}\label{prop:LEM-basic-properties}
  Fix a set of hyperimaginary parameters $A$.
  \begin{enumerate}[(1)]
  \item $\equiv^{\LEM}_A$ is an equivalence relation on the class of $A$-indiscernible sequences.
  \item If $I \equiv^\Las_A J$, then $I \equiv^{\LEM}_A J$.
  \item If $I$ and $J$ have the same order type, then $I \equiv^\Las_A J$ if and only if $I \equiv^{\LEM}_A J$.
  \item If $I \equiv^{\LEM}_A J$, then $I \equiv^{\EM}_A J$.
  \item If $I \approx_A J$, then $I \equiv^{\LEM}_A J$. 
  \end{enumerate}
\end{prop}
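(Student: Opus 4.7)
The plan is to prove parts (2), (3), (4) as short corollaries of the definition together with one preliminary block lemma, then use this lemma and an automorphism alignment to handle (1), and finally deduce (5) from (1) using \cref{lem:approx-special-witness}.

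The preliminary fact I will set up first is what I will call the \emph{block lemma}: if $X + Y + Z$ is an $A$-indiscernible sequence (over a set of hyperimaginary parameters) with $X$ and $Z$ of the same order type $O$ (and $Y$ possibly empty), then $X \equiv^\Las_A Z$. This is proved by extending to $X + Y + Z + X^{(1)} + Y^{(1)} + Z^{(1)} + \dots$ $A$-indiscernible, and then extracting the sub-sequence of blocks $(X, Z, X^{(1)}, Z^{(1)}, \dots)$ grouped as tuples of arity $|O|$; since any two finite concatenations of $n$ such blocks sit inside the big $A$-indiscernible sequence as sub-tuples of length $|O|\cdot n$, they have the same type over $A$, so this sequence of tuples is itself $A$-indiscernible and $X \equiv^\Las_A Z$ as two of its consecutive entries.

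With the block lemma in hand, (2) follows by extending $I$ on the right to $I + J'$ $A$-indiscernible with $|J'| = \text{ot}(I) = \text{ot}(J)$: by the block lemma $J' \equiv^\Las_A I$, and transitivity of $\equiv^\Las_A$ gives $J' \equiv^\Las_A J$. For (4), $I$ and $J'$ are blocks of an $A$-indiscernible sequence, so $I \equiv^{\EM}_A J'$, and $J' \equiv^\Las_A J$ forces $J' \equiv^{\EM}_A J$. The direction (2)$\To$(3) is part of (2), and the reverse direction will be routine once symmetry of $\equiv^{\LEM}_A$ is available: given $I \equiv^{\LEM}_A J$ with $\text{ot}(I) = \text{ot}(J)$, one extracts $I' \equiv^\Las_A I$ with $I' + J$ $A$-indiscernible, and the block lemma applied to $I' + J$ gives $I' \equiv^\Las_A J$. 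Reflexivity and symmetry of $\equiv^{\LEM}_A$ both follow quickly: reflexivity by extending $I$ to $I + I'$; symmetry by taking $\sigma \in \Autf(\Mb/A)$ with $\sigma(J') = J$, which yields $\sigma(I) + J$ $A$-indiscernible with $\sigma(I) \equiv^\Las_A I$, and then extending on the right to $\sigma(I) + J + I^*$ $A$-indiscernible to obtain a witness for $J \equiv^{\LEM}_A I$.

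Transitivity in (1) will be the main obstacle. Given $I + J_1$ $A$-indiscernible with $J_1 \equiv^\Las_A J$ and $J + K_1$ $A$-indiscernible with $K_1 \equiv^\Las_A K$, I first apply an $\Autf$-automorphism carrying $J$ to $J_1$ to reduce to the case $I + J_1$ and $J_1 + L$ both $A$-indiscernible with $L \equiv^\Las_A K$. Extending $J_1 + L$ on the left by an $\text{ot}(I)$-block produces $\tilde I + J_1 + L$ $A$-indiscernible, and $\tilde I + L$ is $A$-indiscernible as a sub-sequence; if $\tilde I \equiv^\Las_A I$, then transporting by some $\rho \in \Autf(\Mb/A)$ with $\rho(\tilde I) = I$ produces the desired $I + \rho(L)$ $A$-indiscernible with $\rho(L) \equiv^\Las_A K$. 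The real work is in showing $\tilde I \equiv^\Las_A I$. The plan is to stretch $J_1$ (via compactness-preserving extensions of $A$-indiscernibles applied to both $I + J_1$ and $J_1 + L$ simultaneously, keeping $J_1$ as a common middle block) until $A J_1$ enumerates a small model $M \supseteq A$, and then use the fact that the two $A$-indiscernible sequences $I + J_1$ and $\tilde I + J_1$ with the same $J_1$ at the end give an automorphism fixing $AJ_1$ pointwise and taking $I$ to $\tilde I$, so living in $\Aut(\Mb/M) \subseteq \Autf(\Mb/A)$.

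Finally, (5) follows directly from (1) together with \cref{lem:approx-special-witness}: a witnessing chain $K_0 = I, L_0, K_1, \dots, L_{n-1}, K_n = J$ gives $K_i \equiv^{\LEM}_A L_i$ and $L_i \equiv^{\LEM}_A K_{i+1}$ at each step (since the $A$-indiscernible concatenations $K_i + L_i$ and $K_{i+1} + L_i$ are themselves direct witnesses of $\equiv^{\LEM}_A$, using symmetry for the second), and transitivity from (1) chains these together to $I \equiv^{\LEM}_A J$. I expect the stretch-$J_1$-to-contain-a-model step in transitivity to be the most delicate piece; if containment of a model cannot be directly ensured, a back-up plan is to argue via the characterization of Lascar strong type by bounded $A$-invariant equivalence relations, showing $E(I, \tilde I)$ for every such $E$ by a pigeonhole on a sufficiently long $A$-indiscernible left extension of $J_1$.
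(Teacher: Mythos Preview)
Your overall strategy parallels the paper's closely: your block lemma is a mild generalization of the paper's stated ``fact,'' and your treatments of (2)--(4), of reflexivity and symmetry in (1), and of (5) via (1) and \cref{lem:approx-special-witness} are essentially the same as (or harmless variants of) the paper's arguments. For transitivity you take the route dual to the paper's---extending $J_1 + L$ on the left rather than extending $I + J'$ on the right---and this leads to exactly the same underlying sublemma: if $I + J_1$ and $\tilde I + J_1$ are both $A$-indiscernible with $J_1$ infinite and $I,\tilde I$ of the same order type, then $I \equiv^\Las_A \tilde I$. The paper needs the mirror-image statement (with the shared infinite block on the left) and passes over it in the line ``$K \equiv^\Las_A K^\ast$''; you, correctly, flag it as the crux.

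However, your primary plan for this sublemma has a genuine gap: stretching $J_1$ cannot produce a model. No matter how long an $A$-indiscernible sequence is, $A \cup J_1$ need not be an elementary substructure---for instance, in the theory of an infinite set with a unary predicate picking out an infinite and co-infinite subset, every indiscernible sequence lies entirely inside or entirely outside the predicate. Your backup plan via pigeonhole on a left extension of $J_1$ is closer but, as stated, still does not connect $I$ and $\tilde I$: the left extensions compatible with $I + J_1$ and with $\tilde I + J_1$ are built separately and may sit in distinct $E$-classes. The missing ingredient is a Ramsey step exploiting that $J_1$ is both $AI$-indiscernible and $A\tilde I$-indiscernible (inherited from the two concatenations): extract $\hat J$ that is $AI\tilde I$-indiscernible and finitely based on $J_1$ over $AI\tilde I$, so that $I + \hat J$ and $\tilde I + \hat J$ are both $A$-indiscernible; then extend $\hat J$ on the right over $AI\tilde I$ by a block $\hat J^+$ of order type $|I|$, giving $I + \hat J + \hat J^+$ and $\tilde I + \hat J + \hat J^+$ both $A$-indiscernible, whence two applications of your block lemma yield $I \equiv^\Las_A \hat J^+ \equiv^\Las_A \tilde I$.
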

\begin{proof}
  Recall the following fact: If $I$ and $J$ have the same order type and $I+J$ is $A$-indiscernible, then $I\equiv^\Las_A J$.
  
  For (1), it follows easily from the fact that $\equiv^{\LEM}_A$ is reflexive. For symmetry, assume that $I \equiv^{\LEM}_A J$, and let $I$, $J$, and $J'$ be as in the definition of $\equiv^{\LEM}$. Find $I'$ such that $IJ' \equiv^\Las_A I'J$. Then extend $I'+J$ to $I'+J+I''$, where $I''$ has the same order type as $I$. We then have that $I'' \equiv^\Las_A I' \equiv^\Las_A I$, so $J \equiv^{\LEM}_A I$. Finally, assume that $I \equiv^{\LEM}_A J$ and $J \equiv^{\LEM}_A K$. Let this be witnessed by $J'$ and $K'$ such that $I+J'$ and $J+ K'$ are $A$-indiscernible.  Find $K''$ with the same order type as $K$ such that $I+J'+K''$ is $A$-indiscernible. Then find $K^\ast$ such that $J'K'' \equiv^\Las_A JK^\ast$. We have that $K \equiv^\Las_A K^\ast \equiv^\Las_A K''$, so $I \equiv^{\LEM}_A K$.

  (2) and (3) are immediate from the fact. (4) is obvious. 
  
  For (5), let $I$ and $J$ be such that $I \approx_A J$. Let $K_0,L_0,K_1,\dots,L_{n-1},K_{n}$ be as in \cref{lem:approx-special-witness} where the $K_i$'s and $L_i$'s (other than $K_0$) have the same order type as $J$. We have that $K_i \equiv^\Las_A L_i$ and $K_i \equiv^\Las_A L_{i-1}$ for each $i > 0$. Therefore we can take $L_0$ to be the required $J'$. 
\end{proof}

Now we will see that total $\indbu$-Morley sequences over $A$ are precisely those which are `as generic as possible' in terms of $\approx_A$ (i.e., their $\equiv^{\LEM}_A$-equivalence class decomposes into a single $\approx_A$-equivalence class).

\begin{thm}\label{thm:ind-bu-Morley-char}
  For any $A$-indiscernible sequence $(b_i)_{i<\omega}$ (with $A$ a set of hyperimaginary parameters), the following are equivalent.
  \begin{enumerate}[(1)]
  \item $b_{<\omega}$ is a total $\indbu$-Morley sequence over $A$.
  \item There exists a pair of infinite sequences $I$ and $J$ (of any, possibly distinct order types) such that $I+J \equiv^{\EM}_A b_{<\omega}$ and $I \indbu_A J$. 
  \item For any $I$, $I \approx_A b_{<\omega}$ if and only if $I \equiv^{\LEM}_A b_{<\omega}$.
  \item $[b_{<\omega}]_{\approx_A} \in \bddu(A)$. 
  \end{enumerate}
\end{thm}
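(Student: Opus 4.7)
My plan is to close the cycle $(1) \Rightarrow (2) \Rightarrow (4) \Rightarrow (1)$ and prove $(3) \Leftrightarrow (4)$ separately. The implication $(1) \Rightarrow (2)$ is immediate: extend $(b_i)_{i<\omega}$ to an $A$-indiscernible sequence $(b_i)_{i<\omega+\omega}$ and take $I = b_{<\omega}$, $J = (b_i)_{\omega \leq i < \omega+\omega}$; then $I+J \equiv^{\EM}_A b_{<\omega}$ and $I \indbu_A J$ by (1). For $(2) \Rightarrow (4)$, use monotonicity of $\indbu$ to shrink $I, J$ to subsequences of order type $\omega$. Since an $A$-indiscernible $\omega$-sequence with the same EM-type as $b_{<\omega}$ is $A$-conjugate to $b_{<\omega}$, after an $A$-automorphism we may assume $I = b_{<\omega}$. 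Then $b_{<\omega}+J$ is $A$-indiscernible, giving $b_{<\omega} \sim_A J$, so $[b_{<\omega}]_{\approx_A} = [J]_{\approx_A}$ as ultraimaginaries of arity $|A|+\omega$. This common element lies in $\dclu(Ab_{<\omega}) \cap \dclu(AJ) \subseteq \bddu(Ab_{<\omega}) \cap \bddu(AJ) = \bddu(A)$.

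For $(4) \Rightarrow (3)$, the forward part of the biconditional is \cref{prop:LEM-basic-properties}(5). For the reverse, given $I \equiv^{\LEM}_A b_{<\omega}$ witnessed by $b' \equiv^\Las_A b_{<\omega}$ with $I+b'$ $A$-indiscernible, I get $I \approx_A b'$; and by \cref{prop:Lstp-char} there is an automorphism fixing $\bddu_\lambda(A)$ (for $\lambda \geq |A|+\omega+|T|$) that sends $b_{<\omega}$ to $b'$, and this automorphism must fix $[b_{<\omega}]_{\approx_A} \in \bddu(A)$, hence $b_{<\omega} \approx_A b'$. For $(3) \Rightarrow (4)$, note that $\approx_A$ and $\equiv^{\LEM}_A$ are both $A$-invariant, so (3) plus transport by $\Aut(\Mb/A)$ forces $\approx_A$ and $\equiv^{\LEM}_A$ to coincide on the entire $\equiv_A$-class of $b_{<\omega}$; since $\equiv^\Las_A$ refines $\equiv^{\LEM}_A$ by \cref{prop:LEM-basic-properties}(2), this class contains at most $|\Aut(\Mb/A)/\Autf(\Mb/A)|$ many $\equiv^{\LEM}_A$-classes, a bounded quantity.

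The principal step is $(4) \Rightarrow (1)$. Given $[b_{<\omega}]_{\approx_A} \in \bddu(A)$ and $I+J \equiv^{\EM}_A b_{<\omega}$, invariance of $\indbu$ lets me replace $(I,J)$ by an $A$-conjugate and assume $b_{<\omega}+I+J$ is $A$-indiscernible. The same ``chase $\sim_A$'' argument as in $(4) \Rightarrow (3)$ shows that any automorphism fixing $A$ and $[b_{<\omega}]_{\approx_A}$ also fixes $[I]_{\approx_A}$ and $[J]_{\approx_A}$, so both lie in $\dclu(A[b_{<\omega}]_{\approx_A}) \subseteq \bddu(A)$. Via \cref{prop:ind-bu-char}(2) it suffices to produce, for every $I' \equiv^\Las_A I$, a zigzag $I=I^0,J^0,I^1,\dots,J^{n-1},I^n=I'$ with the required Lascar/plain-type relations. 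By \cref{prop:Lstp-char} applied to $[I]_{\approx_A} \in \bddu(A)$ there is an automorphism fixing $A$ and $[I]_{\approx_A}$ sending $I$ to $I'$, so $I \approx_A I'$; \cref{lem:approx-special-witness} with intermediate order type equal to that of $J$ then furnishes $I=K_0,L_0,K_1,L_1,\dots,L_{m-1},K_m=I'$ with $K_j+L_j$ and $K_{j+1}+L_j$ both $A$-indiscernible. Each $L_j$ is an infinite $A$-indiscernible sequence of the same order type and EM-type as $J$, hence $L_j \equiv_A J$; and the standard fact that Lascar strong type agrees with type over parameter sets containing an infinite $A$-indiscernible sequence gives $K_j \equiv^\Las_{AL_j} K_{j+1}$. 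I then inductively translate each $L_j$ to a successor $J^j$-position via an $A$-automorphism fixing the current $I$-position (using that $K_{j+1}+J$ and $K_{j+1}+L_j$ have the same type over $AK_{j+1}$), producing the required alternation of Lascar moves over $AJ^j$ and plain moves over $AI^{j+1}$. The main obstacle is to align the cumulative effect of these translations so that the zigzag lands precisely on $I'$ rather than on some $A$-conjugate of it; this is handled by choosing the final translating automorphism using $[I]_{\approx_A} \in \bddu(A)$ together with $I \approx_A I'$, so that the mismatch is absorbed by an additional Lascar move over $A$ followed by one additional $\sim_A$-step.
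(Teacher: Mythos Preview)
Your reorganization into the cycle $(1)\Rightarrow(2)\Rightarrow(4)\Rightarrow(1)$ together with $(3)\Leftrightarrow(4)$ is perfectly legitimate, and your direct argument for $(2)\Rightarrow(4)$ is in fact cleaner than the paper's route through $(3)$: once you have $b_{<\omega}+J$ $A$-indiscernible with $b_{<\omega}\indbu_A J$, the observation that $[b_{<\omega}]_{\approx_A}$ is interdefinable over $A$ with $[J]_{\approx_A}$ and hence lies in $\bddu(Ab_{<\omega})\cap\bddu(AJ)=\bddu(A)$ is a genuine shortcut. Your $(3)\Leftrightarrow(4)$ matches the paper.

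The gap is in your $(4)\Rightarrow(1)$, specifically in how you force the zigzag of \cref{prop:ind-bu-char}(2) to start with $c^0=J$. Your parenthetical ``using that $K_{j+1}+J$ and $K_{j+1}+L_j$ have the same type over $AK_{j+1}$'' is not justified: for $j\geq 0$ nothing tells you $K_{j+1}+J$ is $A$-indiscernible, so you cannot compare it to $K_{j+1}+L_j$. If instead you translate by a single $\tau\in\Aut(\Mb/AI)$ taking $L_0$ to $J$, the whole chain $K_0,L_0,\dots,K_m$ is carried to one ending at $\tau(I')$ rather than $I'$, and your proposed absorption of this mismatch by ``an additional Lascar move over $A$ followed by one $\sim_A$-step'' does not produce moves of the form demanded by \cref{prop:ind-bu-char}(2), which are $\equiv^\Las_{AJ^i}$ and $\equiv_{AI^{i+1}}$, not $\equiv^\Las_A$.

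The repair is far simpler than what you attempt. You already know $I+J$ and $I+L_0$ are both $A$-indiscernible with $J,L_0$ of the same order type, so $J\equiv_{AI}L_0$. Now just \emph{prepend} one trivial step: take $b^0=I$, $c^0=J$, $b^1=I$ (the move $I\equiv^\Las_{AJ}I$ is trivial), $c^1=L_0$ (allowed since $J\equiv_{AI}L_0$), and then continue with $b^2=K_1$, $c^2=L_1$, \dots, $b^{m+1}=K_m=I'$. Each subsequent step is exactly $K_j\equiv^\Las_{AL_j}K_{j+1}$ or $L_j\equiv_{AK_{j+1}}L_{j+1}$, both of which follow from the general fact (used in the paper) that if $P+Q$ and $P+Q'$ are $A$-indiscernible then $Q\equiv^\Las_{AP}Q'$, applied in both orientations. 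This is precisely what the paper does in its $(3)\Rightarrow(1)$, where it writes ``$J_0=J$'' when invoking \cref{lem:approx-special-witness}; that equality is not literally delivered by the lemma, but the prepending makes it immediate. No inductive translation is needed, and there is no obstacle to overcome.
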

\begin{proof}
  (1)$\To$(2). This is immediate from the definition.
\vspace{0.5em}
  
\noindent  (2)$\To$(3). Let $I$ and $J$ be as in the statement of (2). By \cref{prop:LEM-basic-properties}, we obviously have that if $K \approx_A I$, then $K \equiv^{\LEM}_A I$. Conversely, assume that $K \equiv^{\LEM}_A I$, so there is $I'\equiv^\Las_A I$ such that $K+I'$ is $A$-indiscernible. Since $I \indbu_A J$, \cref{prop:ind-bu-char} allows us to find $I_0, J_0, I_1,\dots, J_{n-1}, I_n$ such that $I_0 = I$, $J_0 = J$, $I_n = I'$, and, for each $i<n$, $I_i \equiv^\Las_{AJ_i} I_{i+1}$ and $J_i \equiv^\Las_{AI_{i+1}}J_{i+1}$, if $i<n-1$. In particular, this implies that for each $i < n$, $I_i + J_i$ and $I_{i+1} + J_i$ are both $A$-indiscernible, so $I \approx_A I' \sim_A K$, whence $I \approx_A K$.
\vspace{0.5em}

\noindent (3)$\To$(1). Since $I\approx_A J$ always implies $I \equiv^{\LEM}_A J$, we only need to prove one direction.

Assume (3), and let $I$ and $J$ be such that $I+J \equiv^{\EM}_{A} b_{<\omega}$. Let $I'$ be some sequence of the same order type as $I$. Suppose furthermore that $I \equiv^\Las_A I'$. This implies that $I \equiv^{\LEM}_A I'$, so, by assumption, we have that $I \approx_A I'$. By \cref{lem:approx-special-witness}, we can find $I_0,J_0,I_1,\dots,J_{n-1},I_n$ such that $I_0 = I$, $J_0 = J$, $I_n = I'$, each $I_i$ has the same order type as $I$, each $J_i$ has the same order type as $J$, and $I_i + J_i$ and $I_{i+1}+J_i$ are both $A$-indiscernible for each $i<n$.

It is generally true that if $K+L$ and $K+L'$ are both $A$-indiscernible, then $L \equiv^\Las_{AK} L'$. Therefore, we have that for each $i<n$, $I_i \equiv^\Las_{AJ_i} I_{i+1}$ and, for each $i<n-1$, $J_{i}\equiv^\Las_{AI_{i+1}} J_{i+1}$. Since we can do this for any $I' \equiv^\Las_A I$, \cref{prop:ind-bu-char} yields that $I \indbu_A J$.
\vspace{0.5em}

\noindent (3)$\To$(4). This follows immediately from \cref{prop:LEM-basic-properties} and the fact that $[b_{<\omega}]_{\equiv^\Las_A}$ is always in $\bddu(A)$.
\vspace{0.5em}

\noindent (4)$\To$(3). Let $I \equiv^{\LEM}_A b_{<\omega}$. Find $I'$ such that $I \equiv^\Las_A I'$ and $b_{<\omega} + I'$ is $A$-indiscernible. Since $[b_{<\omega}]_{\approx_A} \in \bddu(A)$, we must also have that $[I']_{\approx_A} \in \bddu(A)$. By \cref{prop:Lstp-char}, there must be an automorphism $\sigma \in \Aut(\Mb/A,[I']_{\approx A})$ such that $\sigma \cdot I' = I$. Therefore $I\approx_A I'$ and hence $I \approx_A b_{<\omega}$.
\end{proof}

\subsection{Building ((weakly) total) \texorpdfstring{$\indbu$}{bu}-Morley sequences}

\noindent Given that $\indbu$ satisfies full existence, an immediate, familiar Erd\"os-Rado argument gives  that $\indbu$-Morley sequences exist, but in the end we will need a technical strengthening of this result.

  \begin{prop}\label{prop:Mor-seq-exist-technical}
    If $(b_f)_{f \in \Tc_\alpha}$ is $s$-indiscernible over $A$, then there is a family $(c_f)_{f\in\Fc_{\alpha+1}}$ such that
    \begin{itemize}
    \item $c_{\in \Fc_{\alpha+1}}$ is $s$-indiscernible over $A$,
    \item $c_{\iota_{\alpha,\alpha+1}(f)} = b_f$ for each $f \in \Tc_\alpha$, and
    \item the sequence $(c_{\tgeq \langle i \rangle})_{i < \omega}$ is an $\indbu$-Morley sequence over $A$. 
    \end{itemize}
  \end{prop}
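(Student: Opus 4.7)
The plan is to build a long $\indbu$-Morley sequence of tree-copies of $b$ over $A$, extract an $A$-indiscernible sub-sequence that inherits the Morley property, place these as the top-level branches of a labeled forest on $\Fc_{\alpha+1}$, and apply tree modeling to obtain the desired $s$-indiscernible tree $c$.

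First, I would iterate \cref{thm:full-existence} (viewing each tree as a real tuple enumerating its values) to construct a sequence $(b^i_{\in \Tc_\alpha})_{i<\lambda}$ with $b^0 = b$, each $b^i \equiv_A b$, and $b^i \indbu_A b^{<i}$ for every $i<\lambda$, taking $\lambda$ large enough to feed \cref{fact:ER-ind-seq}. That fact then produces an $A$-indiscernible sequence $(\hat b^i)_{i<\omega}$ for which, for each $n$, one has $(\hat b^0, \ldots, \hat b^n) \equiv_A (b^{j_0}, \ldots, b^{j_n})$ for some $j_0 < \ldots < j_n < \lambda$. By monotonicity of $\indbu$ we get $b^{j_n} \indbu_A b^{j_0}\cdots b^{j_{n-1}}$, and transferring along the witnessing $A$-automorphism yields $\hat b^n \indbu_A \hat b^{<n}$. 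Since each $\hat b^i$ is $A$-conjugate to $b$, it is itself an $s$-indiscernible tree over $A$; a further $A$-automorphism lets us arrange $\hat b^0 = b$.

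Next, I would form the $\Fc_{\alpha+1}$-indexed labeled array $d$ defined by $d_{f \cup \{(\alpha,i)\}} = \hat b^i_f$ for $f \in \Tc_\alpha$ and $i<\omega$, and invoke \cref{fact:modeling-for-s-ind} to obtain an $s$-indiscernible tree $(c_g)_{g \in \Fc_{\alpha+1}}$ over $A$ locally based on $d$. The 0-th branch of $c$ has the same $A$-type as $\hat b^0 = b$ (by local basing together with the $s$-indiscernibility of both $b$ and $c$), so an $A$-automorphism then arranges $c_{\iota_{\alpha,\alpha+1}(f)} = b_f$ for every $f \in \Tc_\alpha$ while preserving $s$-indiscernibility.

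The main obstacle is verifying that $(c_{\tgeq \langle i \rangle})_{i<\omega}$ is an $\indbu$-Morley sequence. The approach is to argue that the $A$-indiscernibility of $(\hat b^i)_{i<\omega}$ and the internal $s$-indiscernibility of each $\hat b^i$ combine to give enough uniformity in $d$ that, together with local basing and the $s$-indiscernibility of $c$, we obtain the infinite-tuple type equality $(c_{\tgeq \langle 0 \rangle}, \ldots, c_{\tgeq \langle n \rangle}) \equiv_A (\hat b^0, \ldots, \hat b^n)$ for each finite $n$; transferring $\hat b^n \indbu_A \hat b^{<n}$ along the witnessing $A$-automorphism then gives $c_{\tgeq \langle n \rangle} \indbu_A c_{\tgeq \langle <n \rangle}$. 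The delicate point is the $d$-uniformity claim $d_{\bar g} \equiv_A d_{\bar g'}$ for quantifier-free equivalent $\bar g, \bar g' \in \Fc_{\alpha+1}$: an $A$-automorphism witnessing sequence-indiscernibility of the $\hat b^i$'s respects the internal $\Tc_\alpha$-indexing of each tree, which reduces the task to simultaneous within-branch inner-position shifts, where one must check that the individual $s$-indiscernibilities of the $\hat b^i$'s remain compatible with the joint structure of the whole forest.
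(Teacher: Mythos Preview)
Your approach has a genuine gap at exactly the point you flag as ``delicate'': the forest $d$ is \emph{not} in general $s$-indiscernible over $A$. Sequence-indiscernibility of $(\hat b^i)_{i<\omega}$ over $A$ together with the individual $s$-indiscernibility of each $\hat b^i$ over $A$ does not give joint $s$-indiscernibility. After using sequence-indiscernibility to align branch indices, you must simultaneously shift positions within several branches; this would require each $\hat b^{i}$ to be $s$-indiscernible over $A$ \emph{together with the other $\hat b^j$'s}, and nothing in your construction ensures that. When you built $b^i \equiv_A b$ with $b^i \indbu_A b^{<i}$ via \cref{thm:full-existence}, you placed no constraint on $\tp(b^i/Ab^{<i})$ beyond independence, so there is no reason the extracted $\hat b^1$ should be $s$-indiscernible over $A\hat b^0$. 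Once $d$ fails to be $s$-indiscernible, local basing gives only finite approximations with varying qf-equivalent witnesses, and since $\indbu$ lacks finite character you cannot assemble these into the infinite-tuple type equality needed to transfer the Morley property to $c$. (Note too that if $d$ \emph{were} $s$-indiscernible, the modeling step would be superfluous: $c=d$ would already work.)

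This is precisely why the paper iterates \cref{lem:full-existence-technical} rather than \cref{thm:full-existence}: that lemma produces the independent copy \emph{inside an $s$-indiscernible extension of the given tree}, so the entire growing tree remains $s$-indiscernible at every stage. After Erd\H{o}s--Rado extraction, the $s$-indiscernibility of the resulting forest $c_{\in \Fc_{\alpha+1}}$ is then verified directly from the $s$-indiscernibility of the ambient tree, sidestepping the obstruction you hit.
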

  \begin{proof}
    Let $\kappa$ be sufficiently large to apply Erd\"os-Rado to a sequence of tuples of the same length as $b_{\in \Tc_\alpha}$ over the set $A$.

    Let $\gamma(0) = \alpha$. Let $c^0_f = b_f$ for all $f \in \Tc_{\gamma(0)} = \Tc_\alpha$. Let $g_0 = \varnothing$ (as an element of $\Tc_\alpha$).

    At successor stage $\beta+1$, assume we have $(c^\beta_f)_{\Tc_{\gamma(\beta)}}$ which is $s$-indiscernible over $A$ and which satisfies $c^\beta_{\iota_{\delta,\beta}(f)} = c^\delta_f$ for all $\delta \in [\alpha,\beta)$. By \cref{lem:full-existence-technical}, we can build a family $(c^{\beta+1}_f)_{\Tc_{\gamma(\beta+1)}}$ (for some successor ordinal $\gamma(\beta+1) > \gamma(\beta)$) such that
    \begin{itemize}
    \item $(b^{\beta+1}_f)_{f \in \Tc_{\gamma(\beta+1)}}$ is $s$-indiscernible over $A$,
    \item for each $f \in \Tc_{\gamma(\beta)}$, $c^{\beta}_{f} = c^{\beta+1}_{\iota_{\gamma(\beta),\gamma(\beta+1)}(f)}$, and
    \item $c^{\beta+1}_{\tgeq \zeta_(\gamma(\beta))} \indbu_A c^{\beta+1}_{\tgeq h}$, where $\dom(h) = [\gamma(\beta),\gamma(\beta+1))$, $h(\delta) = 0$ for all $\delta \in [\gamma(\beta),\gamma(\beta+1)-1)$, and $h(\gamma(\beta+1)-1) = 1$. 
    \end{itemize}
    Let $g_{\beta+1} \in \Tc_{\gamma(\beta+1)}$ be the function with domain $[\alpha,\gamma(\beta+1))$ which has $h(\delta) = 0$ for all $\delta \in [\alpha,\gamma(\beta+1)-1)$ and $h(\gamma(\beta+1)-1) =1$. Note that $g_{\beta+1}\tgeq h$. Also note that by induction we have that
    \[
\textstyle      c^{\beta+1}_{\tgeq g_{\beta+1}} \indbu_A \{c^{\beta+1}_{\tgeq \iota_{\gamma(\delta),\gamma(\beta+1)}(g_{\delta})} : \delta < \beta+1\},
\]
since $\iota_{\gamma(\delta),\gamma(\beta+1)}(g_\delta)\tgeq \zeta_{\gamma(\beta)}$ for all $\delta < \beta+1$.
    
At limit stage $\beta$, let $\gamma(\beta) = \sup_{\delta<\beta}\gamma(\delta)$ and let $(c^\beta_f)_{f \in \Tc_{\gamma(\beta)}}$ be the direct limit of $(c^{\delta}_f)_{f \in\Tc_{\gamma(\delta)}}$ for $\delta < \beta$. Leave $g_\beta$ undefined.

Stop once we have $(b^\kappa_{f})_{f \in \Tc_{\gamma(\kappa)}}$. Consider the sequence $(b^\kappa_{\tgeq \iota_{\gamma(\beta),\kappa}(g_\beta)})_{\beta \in \kappa \setminus \lim \kappa}$.\footnote{We write $\lim \alpha$ for the set of limit ordinals in $\alpha$.} By our choice of $\kappa$ and a standard application of the Erd\"os-Rado theorem, we can find a family $(c_f)_{f\in \Fc_{\alpha+1}}$ such that the sequence $(c_{\tgeq \langle i \rangle})_{i<\omega}$ is $A$-indiscernible and for every increasing tuple $\bar\imath < \omega$, there is $\bar\beta \in \kappa\setminus \lim\kappa$ such that $c_{\tgeq\langle  i_0\rangle}\dots c_{\tgeq \langle  i_{k} \rangle} \equiv_A b^\kappa_{\tgeq \iota_{\gamma(\beta_0),\kappa}(g_{\beta_0})}\dots b^\kappa_{\tgeq \iota_{\gamma(\beta_{k}),\kappa}(g_{\beta_k})}$.

In particular, note that this implies that
\[
\textstyle  c_{\tgeq \langle i \rangle} \indbu_A \{c_{\tgeq \langle j \rangle} : j < i\}
\]
for every $i<\omega$. Clearly by applying an automorphism, we may assume that $c_{\iota_{\alpha,\alpha+1}}(f) = b_f$ for each $f \in \Tc_\alpha$, so all we need to do is show that the family $c_{\in \Fc_{\alpha+1}}$ is $s$-indiscernible over $A$.

Since the sequence $(c_{\tgeq \langle i \rangle})_{i < \omega}$ is $A$-indiscernible, it is sufficient, by induction, to show the following statement: For any sequence $\bar{f}_0,\bar{f}_1,\dots,\bar{f}_k,\dots,\bar{f}_\ell$ of tuples of elements of $\Fc_{\alpha+1}$ satisfying $\bar{f}_i \tgeq \langle i \rangle$ for all $i\leq \ell$ and any $\bar{h} \tgeq \langle k \rangle$ such that $\bar{f}_k$ and $\bar{h}$ realize the same quantifier-free type, we have that $c_{\bar{f}_k}$ and $c_{\bar{h}}$ realize the same type over $Ac_{\bar{f}_0}\dots c_{\bar{f}_{k-1}}c_{\bar{f}_{k+1}}\dots c_{\bar{f}_\ell}$.

So let $\bar{f}_0,\dots,\bar{f}_\ell$ and $\bar{h}$ be as in the statement. By construction, there are $\beta_0,\dots,\beta_{\ell}$ such that $c_{\tgeq \langle i \rangle} \equiv_A b^\kappa_{\tgeq \iota_{\gamma(\beta_i),\kappa}(g_{\beta_i})}$ for each $i \leq \ell$. Let $\bar{f}'_0,\dots,\bar{f}'_\ell,\bar{h}'$ be the corresponding elements of $\Tc_\kappa$. (So, in particular, $\bar{f}'_i \tgeq g_{\beta_i}$ for each $i \leq \ell$ and $\bar{h}' \tgeq g_{\beta_k}$). We now have that $\bar{f}'_k$ and $\bar{h}'$ realize the same quantifier-free type. Therefore, by the $s$-indiscernible of $b^\kappa_{\in \Tc_\kappa}$, we have that $b^\kappa_{\bar{f}'_k}$ and $b^\kappa_{\bar{h}'}$ realize the same type over $Ab^\kappa_{\bar{f}'_0}\dots b^\kappa_{\bar{f}'_{k-1}}b^\kappa_{\bar{f}'_{k+1}}\dots b^\kappa_{\bar{f}'_\ell}$. From this the required statement follows, and we have that $c_{\in \Fc_{\alpha+1}}$ is $s$-indiscernible over $A$.
\end{proof}

\begin{cor}\label{cor:Mor-seq-exists}
  For any set of hyperimaginaries $A$ and any real tuple $b$, there is an $\indbu$-Morley sequence $(b_i)_{i<\omega}$ over $A$ with $b_0 = b$. 
\end{cor}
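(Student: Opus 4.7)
The plan is to derive this as an essentially immediate specialization of Proposition~\ref{prop:Mor-seq-exist-technical}, applied to the one-node tree consisting of $b$ itself. There is no real combinatorics left to do at this stage; the work was already packed into the tree-indexed statement.

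Concretely, I would start by noting that $\Tc_0 = \{\varnothing\}$, so the singleton family $(b_f)_{f \in \Tc_0}$ with $b_\varnothing = b$ is vacuously $s$-indiscernible over $A$. Apply Proposition~\ref{prop:Mor-seq-exist-technical} with $\alpha = 0$ to this family to obtain a family $(c_f)_{f \in \Fc_1}$ which is $s$-indiscernible over $A$, satisfies $c_{\iota_{0,1}(\varnothing)} = b$, and has the property that the sequence $(c_{\tgeq \langle i \rangle})_{i<\omega}$ is an $\indbu$-Morley sequence over $A$.

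Since $\Tc_1 = \{\varnothing\} \cup \{\langle i \rangle : i < \omega\}$ and $\Fc_1 = \{\langle i \rangle : i<\omega\}$, for each $i<\omega$ the only $f \in \Tc_1$ with $\langle i \rangle \tleq f$ is $\langle i \rangle$ itself, so $c_{\tgeq \langle i \rangle}$ is just (an enumeration of) the singleton tuple $c_{\langle i \rangle}$. Moreover $\iota_{0,1}(\varnothing) = \{(0,0)\} = \langle 0 \rangle$, so $c_{\langle 0 \rangle} = b$. Setting $b_i := c_{\langle i \rangle}$ then gives the required $\indbu$-Morley sequence $(b_i)_{i<\omega}$ over $A$ with $b_0 = b$.

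Since Proposition~\ref{prop:Mor-seq-exist-technical} does all the work, there is no genuine obstacle in the corollary itself; the only thing to be careful about is simply unwinding the tree notation to confirm that what the proposition delivers really is a sequence (not a tree) and that the base point in the hypothesis ends up as the initial term of that sequence.
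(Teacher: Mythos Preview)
Your proposal is correct and follows exactly the same approach as the paper: apply Proposition~\ref{prop:Mor-seq-exist-technical} to the trivial tree $(b_f)_{f\in\Tc_0}$ with $b_\varnothing=b$. The paper's proof is a single sentence to this effect; your additional unwinding of the tree notation to check that $c_{\tgeq\langle i\rangle}=c_{\langle i\rangle}$ and $c_{\langle 0\rangle}=b$ is accurate and arguably a helpful sanity check, but not something the paper spells out.
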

\begin{proof}
  Apply \cref{prop:Mor-seq-exist-technical} to the tree $(b_f)_{f \in \Tc_{0}}$ defined by $b_\varnothing = b$. 
\end{proof}

The order type $\omega$ is essential, however; Erd\"os-Rado only guarantees the existence of sequences that satisfy the relevant condition on finite tuples.  Fortunately, this is more than sufficient for the following weak `chain condition.'

\begin{lem}\label{lem:weak-chain-condition}
  If $(b_i)_{i<\omega}$ is an $\indbu$-Morley sequence over $A$ that is moreover $Ac$-indiscernible, then $b_0 \indbu_A c$.
\end{lem}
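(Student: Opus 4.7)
The plan is to argue by contradiction, using the bounded $\Aut(\Mb/Ac)$-orbit of a witness to dependence together with pigeonhole on a stretched copy of the sequence, and then exploiting that the sequence is an $\indbu$-Morley sequence to produce a contradiction with $A$-indiscernibility.

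Suppose $b_0 \nindbu_A c$, so there is an ultraimaginary $d_E \in (\bddu(Ab_0)\cap \bddu(Ac))\setminus \bddu(A)$. Since $d_E \in \bddu(Ac)$, the $\Aut(\Mb/Ac)$-orbit of $d_E$ is a bounded set of ultraimaginaries; fix a cardinal $\lambda$ strictly greater than the cardinality of this orbit. By the standard compactness argument, I can extend $(b_i)_{i<\omega}$ to an $Ac$-indiscernible sequence $(b_i)_{i<\lambda}$ (this sequence is then also $A$-indiscernible).

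Next, for each $i<\lambda$, pick $\sigma_i \in \Aut(\Mb/Ac)$ with $\sigma_i\cdot b_0 = b_i$ (possible by $Ac$-indiscernibility). Set $d^i := \sigma_i\cdot d$. Because $\sigma_i$ fixes $A$ and sends $b_0$ to $b_i$, we have $d^i_E \in \bddu(Ab_i)$. All the $d^i_E$ lie in the bounded orbit $\Aut(\Mb/Ac)\cdot d_E$, so by pigeonhole there exist $i<j<\lambda$ with $d^i_E = d^j_E$. This common ultraimaginary then lies in $\bddu(Ab_i)\cap \bddu(Ab_j)$.

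Now I would use the $\indbu$-Morley hypothesis. By hypothesis $b_1 \indbu_A b_0$, and by $A$-indiscernibility of the extended sequence $(b_i, b_j) \equiv_A (b_0, b_1)$, so by invariance $b_j \indbu_A b_i$; that is, $\bddu(Ab_i)\cap \bddu(Ab_j) = \bddu(A)$. Hence $d^i_E \in \bddu(A)$. But $d^i_E$ is an $\Aut(\Mb/A)$-conjugate of $d_E$, so $d_E \in \bddu(A)$, contradicting our choice of $d_E$. The only nontrivial ingredient here is the pigeonhole step, which requires that we be allowed to extend an $Ac$-indiscernible sequence to an arbitrary length while preserving $Ac$-indiscernibility; assuming $c$ is a tuple of (possibly hyper)imaginaries this is a routine compactness argument, and if $c$ is ultraimaginary one can reduce to a co-small model capturing $c$ via \cref{cor:shrink-ultra} and run the same pigeonhole there.
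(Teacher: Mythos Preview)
Your argument is correct and follows essentially the same idea as the paper's proof: extend the sequence to a long $Ac$-indiscernible sequence, use that any two terms are $\indbu$-independent over $A$, and apply a pigeonhole/counting argument against the boundedness of something over $Ac$. The only cosmetic difference is that the paper phrases it as a direct proof---the sets $\bddu_\lambda(Ab_i)\setminus\bddu_\lambda(A)$ are pairwise disjoint, so for $\mu^+$ many indices one of them misses the $\mu$-sized set $\bddu_\lambda(Ac)\setminus\bddu_\lambda(A)$, and $Ac$-indiscernibility transfers this to $b_0$---whereas you track a single witness $d_E$ and find a collision among its conjugates; the underlying mechanism is the same.
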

\begin{proof}
  Fix $\lambda$. Let $\mu = |\bddu_\lambda(Ac)\setminus \bddu_\lambda(A)|$. Extend $b_{<\omega}$ to $(b_i)_{i<\mu^+}$. We still have that for any $i<j<\mu^+$, $b_i \indbu_A b_j$ (since this is only a property of $\tp(b_ib_j/A)$). Therefore the sets $\bddu_\lambda(Ab_i)\setminus \bddu_\lambda(A)$ are pairwise disjoint. Since there are $\mu^+$ many of them, one of them must be disjoint from $\bddu_\lambda(Ac)\setminus \bddu_\lambda(A)$. Therefore by indiscernibility, we must have $b_0 \indbu_A c$.  
\end{proof}

We will not use the following corollary of \cref{lem:weak-chain-condition}, but it is worth pointing out.

  \begin{cor}
      If $I$ is a total $\indbu$-Morley sequence over $A$ that is $Ac$-indiscernible, then $I \indbu_A c$.
\end{cor}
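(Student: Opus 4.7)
The plan is to verify $I \indbu_A c$ directly from the definition, i.e., to show that any $d_E \in \bddu(AI)\cap\bddu(Ac)$ lies in $\bddu(A)$. Fix such a $d_E$ and let $\mu = |\Aut(\Mb/Ac)\cdot d_E|$, a bounded cardinal by assumption. The strategy is to use the $Ac$-indiscernibility of $I$ to produce many shifted copies of $I$ together with corresponding $Ac$-conjugates of $d$, and then combine a pigeonhole argument with the total $\indbu$-Morley property.

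Concretely, I would extend $I$ to an $Ac$-indiscernible sequence $(b_i)_{i<\lambda}$ for some regular $\lambda$ with $\lambda > \mu + \omega$. For each $\alpha<\lambda$, the tail $I^\alpha := (b_{\alpha+i})_{i<\omega}$ has the same type over $Ac$ as $I$ (both are $Ac$-indiscernible sequences of order type $\omega$ drawn from a common $Ac$-indiscernible sequence), so one can pick $\sigma^\alpha \in \Aut(\Mb/Ac)$ with $\sigma^\alpha\cdot I = I^\alpha$ and set $d^\alpha := \sigma^\alpha\cdot d$. Then $d^\alpha_E \in \bddu(AI^\alpha)\cap\bddu(Ac)$, and $d^\alpha_E$ lies in $\Aut(\Mb/Ac)\cdot d_E$, a set of size at most $\mu$. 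By pigeonhole over the regular $\lambda>\mu$, some fiber of $\alpha\mapsto d^\alpha_E$ has size $\lambda$ and is therefore unbounded in $\lambda$; since $\lambda>\omega$, I can choose $\alpha<\alpha'$ in this fiber with $\alpha+\omega\leq\alpha'$.

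Totality then closes the argument. Because $\alpha+\omega\leq\alpha'$, the sequences $I^\alpha$ and $I^{\alpha'}$ are disjoint and $I^\alpha+I^{\alpha'}$ appears in order as a subsequence of the $Ac$-indiscernible sequence $(b_i)_{i<\lambda}$, hence is $A$-indiscernible with the same EM-type over $A$ as $I$. The total $\indbu$-Morley property gives $I^\alpha \indbu_A I^{\alpha'}$, so $\bddu(AI^\alpha)\cap\bddu(AI^{\alpha'})=\bddu(A)$. Since $d^\alpha_E = d^{\alpha'}_E$ lies in both, $d^\alpha_E \in \bddu(A)$, and since $\sigma^\alpha \in \Aut(\Mb/Ac) \subseteq \Aut(\Mb/A)$ fixes $\bddu(A)$ setwise, pulling back along $(\sigma^\alpha)^{-1}$ yields $d_E \in \bddu(A)$.

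The only mild obstacle is the cardinal bookkeeping needed to force a pigeonhole fiber that is unbounded in $\lambda$, so that two ordinals in the fiber can be chosen at least $\omega$ apart; any regular $\lambda>\mu+\omega$ suffices. The essential point is semantic: totality forbids two disjoint shifted copies $I^\alpha, I^{\alpha'}$ of $I$ inside a single $A$-indiscernible sequence from sharing any nontrivial bounded ultraimaginary over $A$, which collapses the orbit-sized family of candidate obstructions $d^\alpha_E$ into $\bddu(A)$.
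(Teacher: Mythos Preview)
Your argument is correct. It differs from the paper's proof mainly in packaging: the paper extends $I$ to an $Ac$-indiscernible sequence $I_0+I_1+I_2+\cdots$ with $I_0=I$, observes that $(I_i)_{i<\omega}$ is itself an $\indbu$-Morley sequence over $A$ (by totality), and then invokes \cref{lem:weak-chain-condition} as a black box to conclude $I_0 \indbu_A c$. You instead fix a putative witness $d_E$ and run a pigeonhole on its $Ac$-orbit over a long stretched sequence, finding two disjoint shifted copies $I^\alpha$, $I^{\alpha'}$ sharing the same conjugate $d^\alpha_E=d^{\alpha'}_E$, which totality then forces into $\bddu(A)$. In effect you have inlined (a variant of) the proof of \cref{lem:weak-chain-condition} rather than citing it; the paper's version is shorter and more modular, while yours is self-contained and makes the role of the single obstructing ultraimaginary explicit.
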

\begin{proof}
  Extend $I$ to an $Ac$-indiscernible sequence $I_0 + I_1 + I_2 + \dots$ with $I_0 = I$. Since $I$ is totally $\indbu$-Morley, we have that $(I_i)_{i<\omega}$ is an $\indbu$-Morley sequence over $A$. So by \cref{lem:weak-chain-condition}, we have $I = I_0 \indbu_A c$.
\end{proof}

Part (2) of following definition is equivalent to \cite[Def.~2.1, 3.4]{KKS2013} in our context. The rest of it is based on \cite[Def.~5.7]{KaplanRamseyOnKim}.

  \begin{defn}\label{defn:spread-out}
  Fix a family $(b_f)_{f \in \Tc_\alpha}$.
  \begin{enumerate}[(1)]
  \item For $w \subseteq \alpha \setminus \lim \alpha$, the \emph{restriction of $\Tc_\alpha$ to the set of levels $w$} is given by
    \[
      \Tc_\alpha \res w = \{f \in \Tc_\alpha : \min \dom (f) \in w,~\beta \in \dom(f)\setminus w \To f(\beta)=0\}.
    \]
  \item     A family $(b_f)_{f \in \Tc_\alpha}$ is \emph{$str$-indiscernible over $A$} if it is $s$-indiscernible over $A$ and satisfies that for any $w,v \in [\alpha \setminus \lim \alpha]^{<\omega}$ with $|w|=|v|$, $b_{\in \Tc_\alpha\res w}$ and $b_{\in \Tc_\alpha\res v}$ realize the same type over $A$. 
  \item     We say that $b_{\in \Tc_\alpha}$ is \emph{$\indbu$-spread out over $A$} if for any $f \in \Tc_\alpha$ (with $\dom(f) = [\beta+1,\alpha)$ for some $\beta < \alpha$), the sequence $(b_{\tgeq f \frown \langle i \rangle})_{i < \omega}$ is an $\indbu$-Morley sequence over $A$.
  \item $b_{\in \Tc_\alpha}$ is an \emph{$\indbu$-Morley tree over $A$} if it is $\indbu$-spread out and $str$-indiscernible over $A$.
  \end{enumerate}
\end{defn}

Note that if $b_{\in \Tc_\alpha}$ is $\indbu$-spread out over $A$, then any restriction $b_{\in \Tc_\alpha \res w}$ is also $\indbu$-spread out over $A$ (even for infinite $w$). Also note that, by a basic compactness argument, if $\alpha$ is infinite and $(b_f)_{f \in \Tc_\alpha}$ is $str$-indiscernible over $A$, then for any $\beta$, we can find a tree $(c_f)_{f \in \Tc_\beta}$ which is $str$-indiscernible over $A$ such that for any $w \in [\alpha]^{<\omega}$ and $v \in [\beta]^{<\omega}$ with $|w|=|v|$, $b_{\in \Tc_\alpha \res w} \equiv_A c_{\in \Tc_\beta \res v}$.

\begin{prop}\label{prop:weakly-ind-bu-spread-out-trees-exist}
  For any $A$, $b$, and $\kappa$, there is a tree $(b_f)_{f \in \Tc_\kappa}$ that is $\indbu$-spread out and $s$-indiscernible over $A$ such that for each $f \in \Tc_\kappa$, $b_f \equiv_A b$. 
\end{prop}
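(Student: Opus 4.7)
The plan is to iterate \cref{prop:Mor-seq-exist-technical} transfinitely for $\kappa$ stages, interleaving each application with the modeling property \cref{fact:modeling-for-s-ind} to ensure the tree's root remains a realization of $\tp(b/A)$ at every stage, rather than the empty virtual root that the unmodified iteration would produce.

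By transfinite induction on $\alpha \leq \kappa$, I construct families $(b^\alpha_f)_{f \in \Tc_\alpha}$ that are $s$-indiscernible and $\indbu$-spread out over $A$, with $b^\alpha_f \equiv_A b$ for every $f$. Base: $b^0_\varnothing = b$. Successor $\alpha+1$: first apply \cref{prop:Mor-seq-exist-technical} to $(b^\alpha_f)_{f \in \Tc_\alpha}$ to obtain $(c_f)_{f \in \Fc_{\alpha+1}}$ that is $s$-indiscernible over $A$, agrees with $b^\alpha$ along $\iota_{\alpha,\alpha+1}$, and makes $(c_{\tgeq \langle i \rangle})_{i<\omega}$ an $\indbu$-Morley sequence over $A$; then apply \cref{fact:modeling-for-s-ind} to $(c_f)_{f \in \Fc_{\alpha+1}}$ over the parameter set $A \cup \{b^\alpha_\varnothing\}$, obtaining $(d_f)_{f \in \Fc_{\alpha+1}}$ that is $s$-indiscernible over $Ab^\alpha_\varnothing$ and locally based on $(c_f)$ over $Ab^\alpha_\varnothing$. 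Set $b^{\alpha+1}_f = d_f$ for $f \in \Fc_{\alpha+1}$ and $b^{\alpha+1}_\varnothing = b^\alpha_\varnothing$, so that the new root is literally the same element as the old root. At a limit $\alpha$, use a compactness argument over the type diagrams of the $b^\beta$'s for $\beta<\alpha$ to realize a consistent $(b^\alpha_f)_{f \in \Tc_\alpha}$.

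The verifications at a successor step are routine: $s$-indiscernibility over $A$ follows from $s$-indiscernibility over $Ab^{\alpha+1}_\varnothing$ together with the fact that the root's level in $\Tc_{\alpha+1}$ is unique (so no qf-type-equivalence pairs involve the root on one side and a different node on the other); $\indbu$-spread out over $A$ is preserved by the modeling step because each $\indbu$-Morley requirement $d_i \indbu_A d_{<i}$ depends only on the $A$-type of a finite initial segment, and local basedness over $Ab^\alpha_\varnothing$ preserves $A$-types of finite tuples; and the type condition $b^{\alpha+1}_f \equiv_A b$ holds at the root by construction, and at each non-root $f$ because $d_f \equiv_A c_f \equiv_A b^\alpha_g \equiv_A b$ for the appropriate $g \in \Tc_\alpha$ (via $\iota$ on the image of $\Tc_\alpha$ and via $s$-indiscernibility at matching levels otherwise).

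The main obstacle is the limit stage. The modeling step breaks the strict compatibility $b^\alpha_{\iota_{\beta,\alpha}(f)} = b^\beta_f$ that would allow a straightforward direct limit, so the limit must instead be produced by compactness: one writes down the type over $A$ required of $(b^\alpha_f)_{f \in \Tc_\alpha}$, namely that its restriction to any $\iota_{\beta,\alpha}(\Tc_\beta)$ is an $A$-conjugate of $b^\beta$ (plus $s$-indiscernibility), and argues that this is consistent because each finite fragment involves only finitely many $\beta$'s below $\alpha$ and the successor-stage construction made the finite fragments mutually consistent up to $A$-type. Once consistency is in hand, the direct realization gives $b^\alpha$, and the three properties pass to the limit by the usual reductions.
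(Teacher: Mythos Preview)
Your overall strategy---iterate \cref{prop:Mor-seq-exist-technical} and use the modeling property to supply a root with the right type---is the same as the paper's, but your execution introduces an unnecessary complication. At the successor step, \cref{prop:Mor-seq-exist-technical} already hands you a forest $(c_f)_{f\in\Fc_{\alpha+1}}$ that \emph{literally} extends $b^\alpha$ via $\iota_{\alpha,\alpha+1}$; the paper simply keeps this forest and uses modeling (plus an $A$-automorphism) only to locate a new root $b^{\alpha+1}_\varnothing\equiv_A b$ over which the fixed forest is $s$-indiscernible. This preserves $b^{\alpha+1}_{\iota_{\alpha,\alpha+1}(f)}=b^\alpha_f$ on the nose, so the limit stage is a genuine direct limit with nothing to check. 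You instead re-model the whole forest over $Ab^\alpha_\varnothing$, which destroys this literal compatibility and forces the compactness argument at limits that you sketch. That argument does go through (since your $d$ satisfies $d\equiv_A c$ and hence $b^{\gamma+1}\res\iota_{\beta,\gamma+1}(\Tc_\beta)\equiv_A b^\beta$ is maintained inductively, and $\indbu$-spread-outness is $\equiv_A$-invariant), but it is avoidable.

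There is also a small gap: when $\alpha$ is a limit ordinal, $\Tc_\alpha$ has no root, so the element $b^\alpha_\varnothing$ you invoke at stage $\alpha+1$ does not exist. You would need to pick some fresh realization of $\tp(b/A)$ there instead---or simply adopt the paper's maneuver of fixing the forest and finding the root, which sidesteps the issue entirely.
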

\begin{proof}
  Let $(b^0_f)_{f \in \Tc_0}$ be defined by $b^0_\varnothing = b$. This is vacuously $\indbu$-spread out and $s$-indiscernible out over $A$.  

  At successor stage $\alpha+1$, given $(b^\alpha_f)_{f \in \Tc_\alpha}$ which is $\indbu$-spread out and $s$-indiscernible by \cref{prop:Mor-seq-exist-technical}, we can find an extension $(b^{\alpha+1}_f)_{f \in \Fc_{\alpha+1}}$ satisfying $b^{\alpha+1}_{\iota_{\alpha,\alpha+1}(f)} = b^\alpha_f$ for all $f \in \Tc_\alpha$ such that $b^{\alpha+1}_{ \in \Fc_{\alpha+1}}$ is $s$-indiscernible over $A$  and $(b^{\alpha+1}_{\tgeq \langle i \rangle})_{i < \omega}$ is an $\indbu$-Morley sequence over $A$. By \cref{fact:modeling-for-s-ind}, we can find $b^{\alpha+1}_\varnothing \equiv_A b$ such that the tree $(b^{\alpha+1}_f)_{f \in \Tc_{\alpha+1}}$ is $s$-indiscernible over $A$. By construction, we now have that $(b^{\alpha+1}_f)_{f \in \Tc_{\alpha+1}}$ is $\indbu$-spread out over $A$.

  At limit stage $\alpha$, let $(b^\alpha_f)_{f \in \Tc_\alpha}$ be the direct limit of $(b^{\beta}_{f})_{f \in \Tc_\beta}$ for $\beta < \alpha$. It is immediate from the definitions that $b^\alpha_{\in \Tc_\alpha}$ is $\indbu$-spread out and $s$-indiscernible over $A$.

  Once we have constructed $(b^\kappa_f)_{f \in \Tc_\kappa}$, let $b_f = b^\kappa_f$ for each $f \in \Tc_\kappa$. We have that $b_{ \in \Tc_\kappa}$ is the required tree by induction.
  \end{proof}

By the same argument as in \cite[Lem.~5.10]{KaplanRamseyOnKim}, we get the following.

\begin{lem}\label{lem:Extract-weakly-ind-bu-Morley-tree-with-Erdos-Rado}
  Suppose $(b_f)_{f \in \Tc_\kappa}$ is $\indbu$-spread out and $s$-indiscernible over $A$ with all $b_f$ tuples of the same length. If $\kappa$ is sufficiently large, then there is an $\indbu$-Morley tree $(c_f)_{f \in \Tc_\omega}$ such that for any $w \in [\omega]^{<\omega}$, there is $v \in [\kappa]^{<\omega}$ such that
  \[
    (b_f)_{f \in \Tc_\kappa\res v} \equiv_A (c_f)_{f \in \Tc_\omega\res w}. 
  \]
\end{lem}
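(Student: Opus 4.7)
The plan is to extract the desired $\indbu$-Morley tree via an Erd\"os--Rado argument on types of finite-level restrictions of the given tree, mimicking the strategy of \cite[Lem.~5.10]{KaplanRamseyOnKim}. The $\indbu$-spread-out and $s$-indiscernibility data will be inherited for free by passing to a restriction; all that Erd\"os--Rado needs to add is the extra level-homogeneity that upgrades $s$-indiscernibility to $str$-indiscernibility.

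Concretely, for each $n<\omega$ I would define a coloring $\chi_n \colon [\kappa \setminus \lim \kappa]^n \to S_{x_n}(A)$ by sending a set of non-limit levels $w$ to the type $\tp(b_{\in \Tc_\kappa \res w}/A)$, where the arity $|x_n|$ depends only on $n$ and $|b_\varnothing|$ (since each $\Tc_\kappa \res w$ is countable for $|w|<\omega$). The codomain has size at most $2^{|A|+|T|+\aleph_0}$, so for $\kappa$ sufficiently large (e.g.\ $\kappa \geq \beth_\omega((2^{|A|+|T|})^+)$) one application of Erd\"os--Rado (iterated over $n$) yields an infinite set $W = \{\alpha_n : n<\omega\} \subseteq \kappa \setminus \lim\kappa$, enumerated in increasing order, that is monochromatic under every $\chi_n$ simultaneously.

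Next, I would transfer along the order-isomorphism $n \mapsto \alpha_n$ to define $(c_f)_{f \in \Tc_\omega}$. For $f \in \Tc_\omega$ with $\dom f = [n,\omega)$, let $\tilde f \in \Tc_\kappa \res W$ be the function with $\dom \tilde f = [\alpha_n,\kappa)$ given by $\tilde f(\alpha_k) = f(k)$ on $W \cap \dom \tilde f$ and $\tilde f \equiv 0$ on the complement; set $c_f = b_{\tilde f}$. The map $f \mapsto \tilde f$ is an isomorphism of $\Lc_{s,\omega}$-structures onto $\Tc_\kappa \res W$ (relabeling $P_n \mapsto P_{\alpha_n}$). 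Then I would verify the three conditions: (i)~\emph{$s$-indiscernibility} of $c_{\in \Tc_\omega}$ over $A$ follows by pulling back the $s$-indiscernibility of $b_{\in \Tc_\kappa}$ through this isomorphism; (ii)~the \emph{level-homogeneity} clause in $str$-indiscernibility is immediate from monochromaticity of $W$ under $\chi_{|w|}$, since $c_{\in \Tc_\omega \res w}$ is literally $b_{\in \Tc_\kappa \res \pi(w)}$; and (iii)~\emph{$\indbu$-spread-outness} is inherited from $b_{\in \Tc_\kappa \res W}$ by the observation after \cref{defn:spread-out} that restrictions of $\indbu$-spread-out trees to (even infinite) level sets remain $\indbu$-spread-out. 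The local basing clause is the content of taking $v = \pi(w)$.

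The substantive content is entirely loaded into the Erd\"os--Rado extraction; everything else is a bookkeeping check that $\Tc_\kappa \res W$, equipped with the relabeling isomorphism to $\Tc_\omega$, transports the relevant structure faithfully. The one thing I would want to be careful about is confirming that ``$\indbu$-Morley'' for each successor-branch sequence $(c_{\tgeq f \frown \langle i \rangle})_{i<\omega}$ survives the extraction, but this is precisely what is meant by restrictions preserving $\indbu$-spread-outness, so this presents no new difficulty beyond the analogous step in \cite[Lem.~5.10]{KaplanRamseyOnKim}.
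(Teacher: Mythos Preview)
Your proposal is correct and takes essentially the same approach as the paper: apply Erd\H{o}s--Rado to the coloring $w \mapsto \tp(b_{\in\Tc_\kappa\res w}/A)$ to obtain level-homogeneity, and then verify that the extracted tree inherits $s$-indiscernibility and $\indbu$-spread-outness via the observation following \cref{defn:spread-out}. The only difference is cosmetic---where the paper builds $(c_f)_{f\in\Tc_\omega}$ by compactness from the monochromatic family of finite restrictions, you directly set $c_f = b_{\tilde f}$ via the level-isomorphism onto $\Tc_\kappa\res W$, which is slightly more explicit but amounts to the same thing.
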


\begin{proof}
  For any tree $(b_f)_{f \in \Tc_\kappa}$ where all tuples $b_f$ are the same length, define a function $t$ on $[\kappa\setminus \lim \kappa]^{<\omega}$ that takes $w \in [\kappa\setminus \lim\kappa]^{<\omega}$ to $\tp(b_{\in \Tc_\kappa \res w}/A)$. Let $\kappa$ be large enough to apply Erd\"os-Rado with the appropriate number of colors.

  By \cref{prop:weakly-ind-bu-spread-out-trees-exist}, we can find a tree $(b_f)_{f \in \Tc_\kappa}$ that is $s$-indiscernible and $\indbu$-spread out over $A$. By the typical Erd\"os-Rado argument, we can find a family $F$ of finite sets $w \subset \kappa\setminus \lim \kappa$ closed under subsets such that for any $w,v \in F$ with $|w|=|v|$, $b_{\in\Tc_\kappa \res w}\equiv_A b_{\in \Tc_\kappa\res v}$. By compactness, we can build a tree $(c_f)_{f \in \Tc_\omega}$ such that for any $w \in [\omega]^{<\omega}$, if $v \in F$ has $|w| = |v|$, then $c_{\in \Tc_\omega \res w} \equiv_A b_{\in \Tc_\kappa \res v}$. Finally, we have that $c_{\in \Tc_\omega\res [0,n)}$ is an $\indbu$-Morley tree for every $n < \omega$ because $b_{\in \Tc_\kappa}$ was $s$-indiscernible and $\indbu$-spread out over $A$. Therefore $c_{\in \Tc_\omega}$ is $s$-indiscernible and $\indbu$-spread out over $A$, and so $c_{\in \Tc_\omega}$ is an $\indbu$-Morley tree over $A$.
\end{proof}

\begin{prop}\label{prop:weak-ind-bu-Morley-tree-implies-weakly-total-ind-bu-Morley-seq}
  If $(b_f)_{f \in \Tc_\omega}$ is an $\indbu$-Morley tree over $A$, then $(b_{\zeta_\beta})_{\beta<\omega}$ is a weakly total $\indbu$-Morley sequence over $A$.
\end{prop}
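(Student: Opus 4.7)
The plan is to verify the weakly total $\indbu$-Morley condition directly by applying Lemma~\ref{lem:weak-chain-condition} to a carefully chosen sub-sequence of subtrees. Fix a finite $I$ of length $n$ and $J$ of arbitrary order type with $I + J \equiv^{\EM}_A b_{<\omega}$, where $b_{<\omega} = (b_{\zeta_\beta})_{\beta<\omega}$; the goal is $I \indbu_A J$.

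First, I would reduce to a concrete spine configuration. Using the compactness remark after Definition~\ref{defn:spread-out} on extending $str$-indiscernible trees (and observing that $\indbu$-spread-out is preserved, since being an $\indbu$-Morley sequence is a property of finite initial segments), we may pass to an $\indbu$-Morley tree $(b_f)_{f \in \Tc_\kappa}$ over $A$ with $\kappa$ larger than the order type of $J$. Since two $A$-indiscernible sequences of matching EM-type and order type are $\equiv_A$, by $A$-invariance of $\indbu$ we may assume $I = (b_{\zeta_\beta})_{\beta < n}$ and $J = (b_{\zeta_\beta})_{\beta \in [n, n + |J|)}$, both embedded as initial spine segments of the extended tree.

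Next, fix a non-limit ordinal $m$ with $m > n + |J|$ and apply $\indbu$-spread-out at $\zeta_{m+1}$ to obtain the $\indbu$-Morley sequence $(T_i)_{i<\omega}$ over $A$, where $T_i = b_{\tgeq \zeta_{m+1} \frown \langle i \rangle}$. For any $i, j \geq 1$, the permutation of $\Tc_\kappa$ swapping values $i$ and $j$ at position $m$ (fixing all other values) is an $\Lc_{s,\kappa}$-automorphism, so by $s$-indiscernibility it lifts to an $A$-automorphism of $\Mb$ fixing every $b_f$ with $f(m) = 0$. Since all elements of $I \cup J$ are spine nodes below level $m+1$ (and so have value $0$ at position $m$), this auto fixes $I$ and $J$ pointwise while permuting the subtrees $T_i$ with $i \geq 1$. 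Therefore $(T_i)_{i \geq 1}$ is $AIJ$-indiscernible, and as a sub-sequence of an $\indbu$-Morley sequence it is itself $\indbu$-Morley over $A$. Lemma~\ref{lem:weak-chain-condition} then yields $T_1 \indbu_A IJ$.

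The main obstacle is to convert $T_1 \indbu_A IJ$ into the desired $I \indbu_A J$, since $T_1$ is a sibling subtree rather than $I$ itself, and the internal $I$-copy $I^{(1)} \subseteq T_1$ (which by monotonicity satisfies $I^{(1)} \indbu_A J$) has different tree-relations to $J$ than $I$ does. I expect this gap to be closed via Proposition~\ref{prop:ind-bu-char}'s Lascar crab-walk characterization: one builds an explicit chain of Lascar-strong-type equivalences alternating over $AJ$ and $AI^{(k)}$ for sibling copies $I^{(k)}$ at various tree levels, using the Morley structure of sub-sequences at each level together with $s$-indiscernibility to justify each step. A symmetric application with the roles of $I$ and $J$ exchanged (extracting an $AI$-indiscernible Morley sequence of $J$-like spine segments from higher in the tree) may also be used to close the argument.
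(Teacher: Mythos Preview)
Your argument has a genuine gap at the crucial step. You apply the weak chain condition (\cref{lem:weak-chain-condition}) at a level $m$ \emph{above} both $I$ and $J$, obtaining $T_1 \indbu_A IJ$. But this is the wrong statement: it says a sibling subtree is independent from $IJ$ jointly, which carries no information about whether $I$ and $J$ are independent from each other. Your proposed crab-walk repair does not work: the internal copy $I^{(1)} \subseteq T_1$ does satisfy $I^{(1)} \indbu_A J$ by monotonicity, but $I^{(1)}$ and $I$ bear different $\Lc_{s}$-relations to $J$ (one meets $J$'s branch at level $m$, the other lies on the same spine as $J$), so no $AJ$-automorphism carries $I^{(1)}$ to $I$, and there is no evident chain of Lascar moves that fixes $J$ while transporting $I^{(1)}$ to $I$.

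The paper's proof avoids this entirely by applying \cref{lem:weak-chain-condition} at level $n$, the point where $I$ ends and $J$ begins. There the subtree sequence $(b_{\tgeq \zeta_n \frown \langle i\rangle})_{i<\omega}$ is an $\indbu$-Morley sequence over $A$ which is moreover indiscernible over $A$ together with the spine above level $n$; the lemma then yields $b_{\tgeq \zeta_n \frown \langle 0\rangle} \indbu_A (\text{spine above }n)$, and since $c_{<n}$ lives inside $b_{\tgeq \zeta_n \frown \langle 0\rangle}$, monotonicity gives $c_{<n} \indbu_A J$ directly. A secondary issue: your reduction step embeds $J$ into the spine of $\Tc_\kappa$, but $J$ may have arbitrary (non-wellordered) order type, so this embedding need not exist. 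The paper handles this by a compactness extension of the spine alone (not the whole tree) indexed by $\omega + O$ for $O$ the order type of $J$, while preserving the needed indiscernibility of the subtree sequences over the extended spine.
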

\begin{proof}
  Fix a linear order $O$. Let $c_\alpha = b_{\zeta_\alpha}$ for each $\alpha < \omega$.

  For each positive $n<\omega$ and each $i<j<\omega$, we have that $b_{\tgeq \zeta_{n}\frown \langle i \rangle} \indbu_A b_{\tgeq \zeta_{n}\frown \langle j \rangle}$ and that the sequence $(b_{\tgeq \zeta_{n}\frown \langle i\rangle})_{i < \omega}$ is $Ac_{\geq n}$-indiscernible. By compactness, we can find $(c_i)_{i\in O}$ such that $(c_i)_{i \in \omega + O}$ is $A$-indiscernible and such that $(b_{\tgeq \zeta_{n}\frown \langle i\rangle})_{i < \omega}$ is $Ac_{\in [n,\omega)+O}$-indiscernible for each $n<\omega$.

  Therefore, by \cref{lem:weak-chain-condition}, we have that $c_{<n} \indbu_A c_{\in [n,\omega)+O}$. Hence, $(b_{\zeta_\beta})_{\beta < \omega}$ is a weakly total $\indbu$-Morley sequence.
\end{proof}

\begin{cor}\label{cor:Lascar-strong-type-clean-witness}
  For any $A$ and $b$, there is an $A$-indiscernible sequence $(b_i)_{i<\omega}$ with $b_0 = b$ such that for any $b' \equiv^\Las_A b$ and $n<\omega$, there are $I_0,J_0,I_1,J_1\dots,J_{k-1},I_k$ with
  \begin{itemize}
  \item $b$ the first element of $I_0$,
  \item  $b'$ the first element of $I_k$,
  \item  $|I_i| = n$ for all $i\leq k$,
  \item  $J_i$ infinite for all $i<k$, and
  \item  $I_i + J_i$ and $I_{i+1}+J_i$ realizing the same EM-type over $A$ as $b_{<\omega}$ for all $i<k$.
  \end{itemize}
  We can also arrange it so that $I_i$ is infinite for all $i \leq k$, $|J_i| = n$ for all $i<k$, and $I_i+J_i$ and $I_{i+1}+J_i$ realize the same EM-type over $A$ as $b_{<\omega}$ in the reverse order for all $i<k$ (with the same choice of $b_{<\omega}$ but possibly a different $k$).
\end{cor}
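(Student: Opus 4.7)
The plan is to construct a weakly total $\indbu$-Morley sequence $(b_i)_{i<\omega}$ over $A$ with $b_0=b$ and then produce both chains using the characterization of $\indbu$-independence from \cref{prop:ind-bu-char}. First I would combine \cref{prop:weakly-ind-bu-spread-out-trees-exist}, \cref{lem:Extract-weakly-ind-bu-Morley-tree-with-Erdos-Rado}, and \cref{prop:weak-ind-bu-Morley-tree-implies-weakly-total-ind-bu-Morley-seq} to obtain a weakly total $\indbu$-Morley sequence $(b^\ast_i)_{i<\omega}$ over $A$ in which every $b^\ast_i$ realizes $\tp(b/A)$, and then apply an $A$-automorphism carrying $b^\ast_0$ to $b$ to produce the desired $(b_i)_{i<\omega}$.

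For the first conclusion, fix $b'\equiv^{\Las}_A b$ and $n<\omega$, set $I=(b_0,\dots,b_{n-1})$ and $J=(b_n,b_{n+1},\dots)$, and observe that $I+J=b_{<\omega}$, so weak totality gives $I\indbu_A J$. Choose $\sigma\in\Autf(\Mb/A)$ with $\sigma(b)=b'$ and set $I'=\sigma(I)$; this has length $n$ with first element $b'$, and $I\equiv^{\Las}_A I'$. Applying \cref{prop:ind-bu-char} produces a chain $I^0=I,J^0=J,I^1,\dots,J^{k-1},I^k=I'$ with $I^i\equiv^{\Las}_{AJ^i}I^{i+1}$ for each $i<k$ and $J^i\equiv_{AI^{i+1}}J^{i+1}$ for each $i<k-1$. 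Since Lascar strong type equivalence over $AJ^i$ implies ordinary type equivalence over $AJ^i$, a short induction beginning with $I^0+J^0=b_{<\omega}$ shows that every $I^i+J^i$ and every $I^{i+1}+J^i$ is $A$-indiscernible with the EM-type of $b_{<\omega}$. Setting $I_i=I^i$ and $J_i=J^i$ gives the required configuration.

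For the dual, the key observation is that $\bddu$-independence depends only on the underlying sets of its arguments, which lets us convert a ``finite-at-the-start'' weak-total statement into a ``finite-at-the-end'' one via reversal. Extend $(b_i)_{i<\omega}$ by compactness to an $A$-indiscernible sequence $(b_i)_{i\in L}$, where $L$ is $\mathbb{Z}$ with $n$ additional elements $j_1<\dots<j_n$ placed below every integer. Set $I=(b_0,b_{-1},b_{-2},\dots)$, an $\omega$-indexed $A$-indiscernible sequence with first element $b$, and $J=(b_{j_n},b_{j_{n-1}},\dots,b_{j_1})$, a tuple of length $n$. Then $I+J$ has the EM-type of $b_{<\omega}$ read in reverse, and its order-reversal $J^{\text{rev}}+I^{\text{rev}}$ is the $A$-indiscernible sequence $(b_{j_1},\dots,b_{j_n},\dots,b_{-1},b_0)$ indexed in the natural order on $L$, which has the EM-type of $b_{<\omega}$ and finite initial segment $J^{\text{rev}}$. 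Weak totality therefore gives $J^{\text{rev}}\indbu_A I^{\text{rev}}$, and since $\bddu$ depends only on underlying sets this is the same as $I\indbu_A J$. The chain argument from the previous paragraph, applied to this $I\indbu_A J$ and to $I'=\sigma(I)$ for some $\sigma\in\Autf(\Mb/A)$ with $\sigma(b)=b'$, produces a chain satisfying the dual specifications.

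The main obstacle is confirming that the chain supplied by \cref{prop:ind-bu-char}, which only asserts abstract Lascar equivalences between adjacent terms, actually yields $A$-indiscernible concatenations with the prescribed EM-type. This is handled uniformly by the implication $\equiv^{\Las}_{AJ^i}\To\equiv_{AJ^i}$: passing from $I^i+J^i$ to $I^{i+1}+J^i$ (and likewise from $J^i$ to $J^{i+1}$ over $AI^{i+1}$) preserves the full type over $A$ of the concatenated sequence, and so preserves both indiscernibility and EM-type.
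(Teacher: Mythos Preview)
Your proposal is correct and follows essentially the same approach as the paper: build a weakly total $\indbu$-Morley sequence via \cref{prop:weakly-ind-bu-spread-out-trees-exist}, \cref{lem:Extract-weakly-ind-bu-Morley-tree-with-Erdos-Rado}, and \cref{prop:weak-ind-bu-Morley-tree-implies-weakly-total-ind-bu-Morley-seq}, then run the chain argument from \cref{prop:ind-bu-char} exactly as in the proof of \cref{prop:Lstp-strong-witness}. Your treatment of the dual clause via the reversal trick (observing that $\indbu$ depends only on underlying sets, so a weak-totality statement with finite initial segment converts to one with finite terminal segment) is a clean way of making explicit what the paper leaves implicit.
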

\begin{proof}
  This follows immediately from the same reasoning as in \cref{prop:Lstp-strong-witness} together with \cref{lem:Extract-weakly-ind-bu-Morley-tree-with-Erdos-Rado} and \cref{prop:weak-ind-bu-Morley-tree-implies-weakly-total-ind-bu-Morley-seq}, which establish that weakly total $\indbu$-Morley sequences always exist.
\end{proof}

To go further, we will need the following fact from \cite{silver_1966}. Recall that the statement $\kappa \to (\alpha)^{<\omega}_\gamma$ means that whenever $f: [\kappa]^{<\omega} \to \gamma$ is a function, there is a set $X \subseteq \kappa$ of order type $\alpha$ such that for each $n<\omega$, $f$ is constant on $[X]^n$.

\begin{fact}[Silver \cite{silver_1966}]\label{fact:Silver-fact}
 For any limit ordinal $\alpha$, if $\kappa$ is the smallest cardinal satisfying $\kappa \to (\alpha)^{<\omega}_2$, then for any $\gamma < \kappa$, $\kappa \to (\alpha)^{<\omega}_\gamma$. Furthermore, $\kappa$ is inaccessible.
\end{fact}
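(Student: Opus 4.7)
I would prove the Silver Fact in three interleaved stages: (i) $\kappa$ is a strong limit, (ii) $\kappa$ is regular, and (iii) the partition property lifts from $2$ colors to any $\gamma < \kappa$ colors.

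For strong limit: suppose $2^\lambda \geq \kappa$ for some $\lambda < \kappa$ and fix an injection $e : \kappa \to 2^\lambda$. Define the first-difference function $\delta(\xi, \eta) = \min\{\beta < \lambda : e(\xi)(\beta) \neq e(\eta)(\beta)\}$, and build a Sierpi\'nski/Erd\H{o}s--type $2$-coloring from $\delta$ (comparing $\delta$-values on adjacent pairs within an increasing tuple, refined with higher-arity variants) whose homogeneous sets have order type bounded in terms of $\lambda$, contradicting $\kappa \to (\alpha)^{<\omega}_2$. For regularity: if $\kappa = \sup_{i<\mu} \kappa_i$ with $\mu < \kappa$ and each $\kappa_i < \kappa$, pick witnesses $c_i : [\kappa_i]^{<\omega} \to 2$ of $\kappa_i \not\to (\alpha)^{<\omega}_2$ (by minimality of $\kappa$) and patch them into a single coloring on $[\kappa]^{<\omega}$, adding an auxiliary coloring that records which block each element belongs to. Any $\alpha$-homogeneous set would, by a cofinality argument, be confined to one block, contradicting the choice of $c_i$.

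For the upgrade to $\gamma < \kappa$ colors, I would first establish (using strong-limitness) the equivalence of $\kappa \to (\alpha)^{<\omega}_2$ with the following structural statement: every first-order structure $M$ on universe $\kappa$ in a language $L$ of size $< \kappa$ admits a set of $L$-indiscernibles of order type $\alpha$. The forward direction is a standard enumeration: the formulas (of which there are fewer than $\kappa$) each give a finitary coloring, and these are coded into a single $2$-coloring using the strong-limit bound $2^{|L|+\aleph_0} < \kappa$. Given a $\gamma$-coloring $c : [\kappa]^{<\omega} \to \gamma$ with $\gamma < \kappa$, present $c$ as a structure with relations $\{R^\beta_n : n < \omega,~\beta < \gamma\}$ in a language of size $\gamma + \aleph_0 < \kappa$; any order-type-$\alpha$ set of $L$-indiscernibles is then automatically $c$-homogeneous.

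The main obstacle is the strong-limit step: the coloring must rule out long homogeneous sets \emph{uniformly} for all limit $\alpha$. A naive three-tuple first-difference coloring only bounds homogeneous sets by roughly $\lambda^+$, so when $\alpha$ is small one must work harder, either iterating the construction across higher arities or bootstrapping through intermediate Erd\H{o}s cardinals. Once inaccessibility is in hand, the indiscernibles equivalence and the color upgrade are comparatively soft consequences of standard partition machinery plus compactness.
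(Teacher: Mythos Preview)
The paper does not prove this statement; it is stated as a \emph{Fact} with a citation to Silver's thesis and is used as a black box in the subsequent argument. There is therefore no proof in the paper to compare your proposal against.

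That said, your outline follows the standard route to Silver's theorem and is broadly correct. The regularity argument via patching is exactly right, and the upgrade from $2$ to $\gamma<\kappa$ colors via the indiscernibles characterization is the canonical move once strong-limitness is in hand. You have also correctly identified the genuine technical crux: the strong-limit step. The naive first-difference coloring on triples only forces homogeneous sets to have order type at most $\lambda+1$ or so, which is not enough when $\alpha$ is small (e.g., $\alpha=\omega$ with $\lambda$ infinite). The usual fix is to refine the coloring so that on a homogeneous set the branching points $\delta(\xi_i,\xi_{i+1})$ form a strictly monotone sequence in $\lambda$ \emph{and} the direction of branching is constant; one then argues that an $\omega$-sequence with constant direction and strictly increasing branching points would yield a new element of $2^\lambda$ coded by the limit, contradicting injectivity, while strictly decreasing branching points contradict well-foundedness of $\lambda$. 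Your phrase ``refined with higher-arity variants'' gestures at this but does not spell it out; if you were writing this up in full, that is where the real work lies.
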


The smallest cardinal $\lambda$ satisfying $\lambda \to (\alpha)^{<\omega}_2$ is called the Erd\"os cardinal $\kappa(\alpha)$.  In the specific case of $\alpha = \omega$, we will also need the following fact. (See \cite[Exercise~7.4.9]{drake1974set} for a proof that easily generalizes to a proof of this fact.)  
  \begin{fact}\label{fact:bump-up}    
    If  $\kappa \to (\omega)^{<\omega}_\gamma$, then $(\gamma^{<\kappa})^+ \to (\omega+1)^{<\omega}_\gamma$.  
  \end{fact}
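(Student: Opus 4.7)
The strategy is to produce the desired homogeneous set as $Y = X \cup \{\beta^*\}$ with $X \subseteq \kappa$ of order type $\omega$ and $\beta^* \in \lambda \setminus \kappa$. Set $\lambda = (\gamma^{<\kappa})^+$ and fix $f\colon [\lambda]^{<\omega} \to \gamma$. For a tentative top element $\beta^* > \kappa$, I would form the product coloring $h\colon [\kappa]^{<\omega} \to \gamma \times \gamma$ by $h(s) = (f(s), f(s \cup \{\beta^*\}))$; since $|\gamma \times \gamma|$ is either finite or equal to $|\gamma|$, the hypothesis $\kappa \to (\omega)^{<\omega}_\gamma$ yields $X \subseteq \kappa$ of order type $\omega$ on which $h$ is constant per arity, so $f \res [X]^n$ is constantly $c_n$ and $f(s \cup \{\beta^*\})$ for $s \in [X]^n$ is constantly $d_n$.

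The set $X \cup \{\beta^*\}$ has order type $\omega + 1$, and for $f$ to be constant on each $[X \cup \{\beta^*\}]^{n+1}$ it is necessary and sufficient that $c_{n+1} = d_n$ for every $n \geq 0$; otherwise the tuples of arity $n+1$ split into those in $[X]^{n+1}$ (colored $c_{n+1}$) and those of the form $s \cup \{\beta^*\}$ with $s \in [X]^n$ (colored $d_n$), and per-arity homogeneity fails. This \emph{coherence condition} is the main obstacle: the partition relation applied to $h$ alone does not enforce it, and this is exactly what forces the jump from $\kappa$ to the much larger $(\gamma^{<\kappa})^+$.

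To force coherence I would pick $\beta^*$ generically. Take an elementary substructure $M \prec H(\theta)$ with $|M| = \gamma^{<\kappa}$, containing $f, \kappa, \gamma, \lambda$ as elements and with $\kappa \subseteq M$ (possible since $\kappa \leq \gamma^{<\kappa}$), and choose $\beta^* \in \lambda \setminus M$, which is available because $|M| < \lambda$. The bookkeeping is an Erd\"os--Rado-style pigeonhole on restricted types: for each $\beta \in \lambda$ above $\kappa$ and each $\alpha < \kappa$, the restriction $\tau_\beta \res [\alpha]^{<\omega}$ (where $\tau_\beta(s) = f(s \cup \{\beta\})$) lives in a space of size at most $\gamma^{|[\alpha]^{<\omega}|} \leq \gamma^{<\kappa}$, so iterated pigeonholing across $\alpha < \kappa$---keeping $\lambda$-many candidates at each stage via the regularity of $\lambda = (\gamma^{<\kappa})^+$---produces $\lambda$-many $\beta$'s whose restricted types all agree with that of a ``canonical'' extension of some fixed homogeneous $X \in M$. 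Any such $\beta$ outside $M$ can serve as $\beta^*$, and for it the coherence $d_n = c_{n+1}$ holds, yielding the $(\omega+1)$-homogeneous set. The technical crux is carrying out this $\kappa$-stage pigeonhole while preserving the size of the candidate set and arranging for the common restricted type to match the homogeneity pattern of $X$ at every level.
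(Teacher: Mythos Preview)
The paper does not give its own proof of this fact; it simply cites Drake's textbook (Exercise~7.4.9). So the comparison is really between your outline and the standard argument.

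Your overall shape is right, and you have correctly isolated the crux: after applying the partition hypothesis to the product coloring $h(s)=(f(s),f(s\cup\{\beta^*\}))$ one must force the coherence $d_n=c_{n+1}$. But neither of your two mechanisms for coherence is convincing as written. The elementary-substructure idea does not explain why $\beta^*\notin M$ should make $d_n=c_{n+1}$: the homogeneous set $X$ is extracted \emph{after} $\beta^*$ using a coloring $h$ that is not in $M$, so you cannot assume $X\in M$, and even if you first fix $X\in M$ homogeneous for $f$ alone, nothing about $\beta^*\notin M$ pins down $\tau_{\beta^*}\!\upharpoonright\![X]^{<\omega}$ to the coherent pattern. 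The ``iterated pigeonhole across $\alpha<\kappa$'' has a genuine limit-stage problem: at limit $\alpha$ the candidate set is $\bigcap_{\alpha'<\alpha}\{\beta:\tau_\beta\!\upharpoonright\![\alpha']^{<\omega}=t_{\alpha'}\}$, a decreasing intersection of $|\alpha|$ sets of size $\lambda$, and regularity of $\lambda$ does \emph{not} guarantee this intersection has size $\lambda$. Moreover, even if the pigeonhole succeeded, you would have fixed $\tau_{\beta^*}\!\upharpoonright\![\kappa]^{<\omega}$ but still not tied it to the values $c_{n+1}$ on an as-yet-unfound $X$.

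The standard route (which is what Drake's exercise is pointing at) avoids all of this by inserting an intermediate step: first produce an \emph{end-homogeneous} set $Y\subseteq\lambda$ of order type $\kappa$ (meaning $f(s\cup\{\beta\})$ is independent of $\beta\in Y$ with $\beta>\max s$, for $s\in[Y]^{<\omega}$); this is exactly what the cardinal $(\gamma^{<\kappa})^+$ buys, via a tree argument on $\lambda$ rather than an iterated pigeonhole over initial segments of $\kappa$. Then apply $\kappa\to(\omega)^{<\omega}_\gamma$ to $f\!\upharpoonright\![Y]^{<\omega}$ to get $X\subseteq Y$ of order type $\omega$ homogeneous for $f$, and take $\beta^*\in Y$ above $X$. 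Coherence is now automatic: for $s\in[X]^n$, end-homogeneity gives $f(s\cup\{\beta^*\})=f(s\cup\{x\})=c_{n+1}$ for any $x\in X$ with $x>\max s$. This replaces your ``generic $\beta^*$'' and your $\kappa$-length pigeonhole with a single clean lemma, and it is the step you should fill in.
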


  In particular, if $\kappa(\omega)$ exists, then for any $\gamma < \kappa(\omega)$, $\kappa(\omega)^+ \to (\omega+1)^{<\omega}_\gamma$.

\begin{lem}\label{lem:Erdos-card-extract-tree}
  Suppose $(b_f)_{f \in \Tc_\lambda}$ is $\indbu$-spread out and $s$-indiscernible over $A$ with all $b_f$ tuples of the same length. If $\lambda\to (\omega + 1)^{<\omega}_{2^{|Ab|+|T|}}$, then there is a set $X \subseteq \lambda\setminus \lim \lambda$ with order type $\omega+1$ such that $b_{\in \Tc_\lambda\res w}$ is an $\indbu$-Morley tree over $A$.
\end{lem}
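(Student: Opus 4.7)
The plan is a standard Erd\"os-Rado partition argument: I will define a coloring on $[\lambda]^{<\omega}$ that records the type of the corresponding subtree $b_{\in \Tc_\lambda \res w}$ over $A$, apply the partition hypothesis to extract a homogeneous set of order type $\omega+1$, and verify that the restriction of the tree to that set is an $\indbu$-Morley tree over $A$.

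For any finite $w \subseteq \lambda \setminus \lim \lambda$, the set $\Tc_\lambda\res w$ is countable---a function in $\Tc_\lambda$ has finite support, and the restriction constrains that support to lie in $w$---so $b_{\in \Tc_\lambda \res w}$ is a countable tuple of tuples of length $|b|$, and $\tp(b_{\in \Tc_\lambda \res w}/A)$ takes at most $2^{|Ab|+|T|}$ values. To guarantee that the homogeneous set the partition arrow produces lands in $\lambda \setminus \lim \lambda$, I use a ``shift'' and define
\[
  c\colon [\lambda]^{<\omega} \to 2^{|Ab|+|T|}, \qquad c(w) = \tp\bigl(b_{\in \Tc_\lambda \res w^{+}}/A\bigr),
\]
where $w^{+} = \{\alpha+1 : \alpha \in w\}$. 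Since $\lambda$ is a (necessarily large) limit ordinal, $w^{+} \subseteq \lambda \setminus \lim \lambda$ and $c$ is well-defined.

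Applying the hypothesis $\lambda \to (\omega+1)^{<\omega}_{2^{|Ab|+|T|}}$ to $c$ produces a set $Y \subseteq \lambda$ of order type $\omega+1$ on which $c$ is constant on each $[Y]^n$. Setting $X := \{\alpha+1 : \alpha \in Y\}$ gives a set of order type $\omega+1$ contained in $\lambda \setminus \lim \lambda$, and homogeneity of $c$ translates directly into: for all $w, v \in [X]^{<\omega}$ with $|w|=|v|$, $\tp(b_{\in \Tc_\lambda \res w}/A) = \tp(b_{\in \Tc_\lambda \res v}/A)$. Under the order isomorphism $X \cong \omega+1$, this is precisely the $str$-indiscernibility clause for $b_{\in \Tc_\lambda \res X}$ over $A$.

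It remains to check the remaining two ingredients. That $b_{\in \Tc_\lambda \res X}$ is $\indbu$-spread out over $A$ is inherited from the original tree by the remark following \cref{defn:spread-out}. That it is $s$-indiscernible over $A$ also transfers: under the identification $X \cong \omega+1$, any two tuples in $\Tc_\lambda \res X$ with the same quantifier-free $\Lc_{s,\omega+1}$-type have the same quantifier-free $\Lc_{s,\lambda}$-type (the operations $\tleq, \wedge, <_{lex}$ agree, and the level predicates $P_\beta$ for $\beta \notin X$ fail uniformly on $\Tc_\lambda \res X$), so the $s$-indiscernibility of the original tree applies. Combining all three properties yields that $b_{\in \Tc_\lambda \res X}$ is an $\indbu$-Morley tree over $A$. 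The only genuine subtlety is the limit-ordinal issue handled by the shift; the remainder is essentially the same bookkeeping as in \cite[Lem.~5.10]{KaplanRamseyOnKim}.
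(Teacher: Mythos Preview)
Your proof is correct and follows essentially the same approach as the paper's: define a coloring on finite level sets by the type of the restricted subtree over $A$, extract a homogeneous set of order type $\omega+1$, and observe that $s$-indiscernibility and $\indbu$-spread-outness are inherited by restrictions. The only difference is that the paper defines the coloring directly on $[\lambda\setminus\lim\lambda]^{<\omega}$ (citing back to the proof of \cref{lem:Extract-weakly-ind-bu-Morley-tree-with-Erdos-Rado}) and applies the partition relation without comment, whereas your shift $w\mapsto w^{+}$ makes explicit why the homogeneous set can be taken inside $\lambda\setminus\lim\lambda$.
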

\begin{proof}
  Let the function $t$ be as in the proof of \cref{lem:Extract-weakly-ind-bu-Morley-tree-with-Erdos-Rado}. By assumption, we can find $X \subset \lambda \setminus \lim \lambda$ of order type $\omega+1$ such that $t$ is homogeneous on $X$. The tree $b_{\in \Tc_{\lambda}\res X}$ is clearly $s$-indiscernible over $A$, so the only thing to check is that it is $\indbu$-spread out over $A$, but this follows from the general fact that restrictions of trees $\indbu$-spread out over $A$ are $\indbu$-spread out over $A$.
\end{proof}

\begin{thm}\label{thm:Erdos-total-ind-bu-Morley-seq}
  For any $A$ and $b$ in any theory $T$, if there is a cardinal $\lambda$ satisfying $\lambda\to (\omega + 1)^{<\omega}_{2^{|Ab|+|T|}}$, then there is a total $\indbu$-Morley sequence $(b_i)_{i<\omega}$ over $A$ with $b_0 = b$.

  In particular, it is enough if there is an Erd\"os cardinal $\kappa(\alpha)$ such that $|Ab|+|T| < \kappa(\alpha)$ (for any limit $\alpha \geq \omega$). 
\end{thm}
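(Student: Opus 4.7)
The plan is to extract an $\indbu$-Morley tree of order type $\omega + 1$ from the hypothesized cardinal, and then use a compactness argument at its top spine element to produce infinite sequences $I$ and $J$ witnessing condition (2) of \cref{thm:ind-bu-Morley-char}.

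First, apply \cref{prop:weakly-ind-bu-spread-out-trees-exist} to build an $\indbu$-spread out, $s$-indiscernible tree $(b_f)_{f \in \Tc_\lambda}$ over $A$ with $b_f \equiv_A b$ for each $f$, and then use \cref{lem:Erdos-card-extract-tree} (via the Erd\"os-type hypothesis on $\lambda$) to extract $X = \{x_\alpha : \alpha < \omega+1\} \subseteq \lambda\setminus\lim\lambda$ of order type $\omega + 1$ such that $b_{\in \Tc_\lambda \res X}$ is an $\indbu$-Morley tree over $A$. Setting $c_\alpha = b_{\zeta_{x_\alpha}}$, the spine $(c_\alpha)_{\alpha < \omega+1}$ is $A$-indiscernible. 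In the full tree, the sibling sequence $(T_i)_{i < \omega} := (b_{\tgeq \zeta_{x_\omega} \frown \langle i\rangle})_{i<\omega}$ is $\indbu$-Morley over $A$ by $\indbu$-spread out and is $Ac_\omega$-indiscernible by $s$-indiscernibility (the relevant quantifier-free $\Lc_{s,\lambda}$-types over $\zeta_{x_\omega}$ are invariant under lex-preserving permutations of the branch labels). Moreover $c_{<\omega} \subseteq T_0$, since $x_\omega - 1\notin X$ forces $\zeta_{x_j} \tgeq \zeta_{x_\omega}\frown\langle 0\rangle = \zeta_{x_\omega - 1}$ for all $j < \omega$.

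The central step is to extend via compactness to an $A$-indiscernible sequence $(c'_\alpha)_{\alpha < \omega+\omega}$ of the same EM-type as $c_{<\omega}$ (agreeing with the original spine through position $\omega$) such that $(T_i)_{i<\omega}$ is $Ac'_{[\omega, \omega+\omega)}$-indiscernible. Finite consistency of the partial type expressing both requirements is obtained by combining $str$-indiscernibility of the extracted $\omega+1$-tree (which controls the $A$-indiscernibility type of any finite extension of the spine) with $s$-indiscernibility of the full tree (realizing continuation elements as spine nodes $b_{\zeta_\beta}$ at levels $\beta > x_\omega$, which by the $\Lc_{s,\lambda}$-qf-type argument do not distinguish the sibling subtrees). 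Once obtained, \cref{lem:weak-chain-condition} applied to $(T_i)_{i<\omega}$, which remains $\indbu$-Morley over $A$ and is now $Ac'_{[\omega, \omega+\omega)}$-indiscernible, yields $T_0 \indbu_A c'_{[\omega, \omega+\omega)}$, whence $c'_{<\omega} \indbu_A c'_{[\omega, \omega+\omega)}$ by monotonicity. Taking $I = c'_{<\omega}$ and $J = c'_{[\omega, \omega+\omega)}$, \cref{thm:ind-bu-Morley-char}(2) gives that $c'_{<\omega}$ is a total $\indbu$-Morley sequence over $A$, and an $A$-automorphism sending $c'_0$ to $b$ then yields the required sequence with $b_0 = b$.

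For the second statement, $|Ab|+|T| < \kappa(\alpha)$ implies $\lambda \to (\omega+1)^{<\omega}_{2^{|Ab|+|T|}}$ for a suitable $\lambda$: take $\lambda = \kappa(\alpha)$ and invoke \cref{fact:Silver-fact} when $\alpha > \omega$, or $\lambda = \kappa(\omega)^+$ and invoke \cref{fact:bump-up} when $\alpha = \omega$ (using inaccessibility of $\kappa(\omega)$ to absorb the $2^{|Ab|+|T|}$). The main obstacle is the finite-consistency verification in the central compactness step, which requires carefully weaving together $str$-indiscernibility on the extracted $\omega+1$-tree and $s$-indiscernibility on the original full tree to stretch the spine while simultaneously preserving sibling-sequence indiscernibility over the newly introduced parameters.
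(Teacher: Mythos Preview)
Your overall strategy matches the paper's: build an $\indbu$-spread-out $s$-indiscernible tree via \cref{prop:weakly-ind-bu-spread-out-trees-exist}, extract an $\indbu$-Morley tree of height $\omega+1$ via \cref{lem:Erdos-card-extract-tree}, use compactness to stretch the spine to length $\omega+\omega$, and finish with \cref{lem:weak-chain-condition} and \cref{thm:ind-bu-Morley-char}. The handling of the Erd\H{o}s-cardinal reduction is also the same.

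The gap is in the compactness step. You take the sibling sequence $(T_i)_{i<\omega}$ in the \emph{full} $\Tc_\lambda$-tree and propose to realize the continuation elements $c'_{\omega+k}$ as spine nodes $b_{\zeta_\beta}$ with $\beta > x_\omega$; $s$-indiscernibility of the full tree then gives requirement~(b) (sibling-indiscernibility over the new parameters). But for requirement~(a) (indiscernibility of the extended spine) you invoke $str$-indiscernibility of the extracted tree---which says nothing about these nodes, since $\beta\notin X$. And the full tree is only $s$-indiscernible, so its spine across different levels need not be $A$-indiscernible: the level predicates $P_\beta$ distinguish $\zeta_\beta$'s at distinct heights. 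Thus no single realization in your setup verifies both requirements simultaneously, and the ``combining'' you describe does not yield joint finite consistency.

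The paper sidesteps this by running compactness on the extracted tree itself, viewed abstractly as a $\Tc_{\omega+1}$-indexed tree: since it is $str$-indiscernible, it extends to a $str$-indiscernible $\Tc_{\omega+\omega}$-tree (this is the remark following \cref{defn:spread-out}). In that extension the sibling sequence below the embedded root is literally the original one, hence still an $\indbu$-Morley sequence, and $s$-indiscernibility of the extended tree makes it indiscernible over the new upper spine automatically. This packages your (a) and (b) into a single compactness statement with no need to weave two different trees together. Your argument becomes correct if you replace the full-tree sibling sequence $(T_i)$ by the sibling sequence of the \emph{extracted} tree and perform the extension there.
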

\begin{proof}
If the Erd\"os cardinal $\kappa(\alpha)$ exists and $|Ab|+|T| < \kappa(\alpha)$, then by \cref{fact:Silver-fact}, we have $2^{|Ab|+|T|}<\kappa(\alpha)$ as well. Then if $\alpha= \omega$, we have that $\kappa(\alpha)^+ \to (\omega+1)^{<\omega}_{2^{|Ab|+|T|}}$ by \cref{fact:bump-up}. If $\alpha > \omega$, we clearly have $\kappa(\alpha) \to (\omega+1)^{<\omega}_{2^{|Ab|+|T|}}$ by \cref{fact:Silver-fact}. So in any such case we have the required $\lambda$.

  Let $\lambda$ be a cardinal such that $\lambda \to (\omega+1)^{<\omega}_{2^{|Ab|+|T|}}$ holds. By \cref{prop:weakly-ind-bu-spread-out-trees-exist}, we can build a tree $(b_f)_{f \in \Tc_\lambda}$ that is $s$-indiscernible and $\indbu$-spread out over $A$. By \cref{lem:Erdos-card-extract-tree} and the choice of $\lambda$, we can extract an $\indbu$-Morley tree $(c_f)_{f \in \Tc_{\omega+1}}$ from this. 

  By compactness, we can extend this to a tree $(c_f)_{f \in \Tc_{\omega+\omega}}$ that is $str$-indiscernible over $A$. We still have that for any $i<j<\omega$,
  \[
 \textstyle   c_{\tgeq \zeta_{\omega+1}\frown \langle  i \rangle} \indbu_A c_{\tgeq \zeta_{\omega+1}\frown \langle j \rangle}
  \]
  but now we also have that the $(c_{\tgeq \zeta_{\omega+1}\frown \langle  i \rangle})_{i<\omega}$ is $A\cup\{c_{\zeta_{\omega+i}} : i < \omega\}$-indiscernible, by $str$-indiscernibility of the full tree $c_{\in \Tc_{\omega+\omega}}$. Therefore, by \cref{lem:weak-chain-condition},
  \[
\textstyle    c_{\tgeq \zeta_{\omega+1}\frown \langle 0 \rangle} \indbu_A \{c_{\zeta_{\omega+i}} : i < \omega\},
  \]
  so in particular,
  \[
  \textstyle  \{c_{\zeta_{i}} : i < \omega\} \indbu_A \{c_{\zeta_{\omega+i}} : i < \omega\}.
  \]
  Let $d_i = c_{\zeta_{\alpha(i)}}$ for each $i<\omega+\omega$. We have that $(d_i)_{i< \omega + \omega}$ is $A$-indiscernible. Furthermore, by \cref{thm:ind-bu-Morley-char}, we have that $d_{<\omega}$ is a total $\indbu$-Morley sequence. By applying an automorphism, we get the required $b_{<\omega}$.
\end{proof}

So if we assume that for every $\lambda$, there is a $\kappa$ such that $\kappa \to (\omega+1)^{<\omega}_\lambda$, we get that Lascar strong type is always witnessed by total $\indbu$-Morley sequences in the manner of \cref{prop:Lstp-strong-witness}.

The use of large cardinals in \cref{thm:Erdos-total-ind-bu-Morley-seq} leaves an obvious question.

\begin{quest}
  Does the statement `for every $A$ and $b$, there is a total $\indbu$-Morley sequence $(b_i)_{i<\omega}$ over $A$ with $b_0 = b$' have any set-theoretic strength? What if we add cardinality restrictions, such as $|A|+|T|\leq \aleph_0$ and $|b| < \aleph_0$?
\end{quest}

\subsection{Total \texorpdfstring{$\indbu$}{bu}-Morley sequences in tame theories}

\noindent \cref{lem:weak-chain-condition} can be used to show that $\indd $ implies $\indbu$ (where $b\indd_A c$ means that $\tp(b/Ac)$ does not divide over $A$), something which was previously established for bounded hyperimaginary independence, $\indb$, in \cite{conant2021separation} and which is folklore for algebraic independence, $\inda$.

\begin{prop}
  For any real elements $A$, $b$, and $c$, if $b \indd_A c$, then $b \indbu_A c$.
\end{prop}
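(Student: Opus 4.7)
The plan is to exploit \cref{lem:weak-chain-condition}: if we can find an $\indbu$-Morley sequence over $A$ starting with $c$ that is simultaneously indiscernible over $Ab'$ for some $b' \equiv_{Ac} b$, then the weak chain condition (combined with symmetry of $\indbu$) will immediately deliver the conclusion.

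Concretely, I would first invoke \cref{cor:Mor-seq-exists} to produce an $\indbu$-Morley sequence $(c_i)_{i<\omega}$ over $A$ with $c_0 = c$. Since $\tp(b/Ac)$ does not divide over $A$, the set $\bigcup_{i<\omega} \tp(b/Ac_i)$ is consistent, so some $b^\ast$ with $b^\ast \equiv_{Ac_i} b$ for all $i$ exists. A standard Ramsey/Erd\H{o}s-Rado extraction (preserving the EM-type of the sequence over $A$), followed by an automorphism fixing $A$ that repositions $c'_0$ back to $c$, produces a $b' \equiv_{Ac} b$ over which the $\indbu$-Morley sequence $(c_i)_{i<\omega}$ is indiscernible. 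Now \cref{lem:weak-chain-condition}, applied to the $\indbu$-Morley sequence $(c_i)_{i<\omega}$ with parameter $b'$, yields $c = c_0 \indbu_A b'$. By symmetry, $b' \indbu_A c$, and then invariance of $\indbu$ under $b' \equiv_{Ac} b$ gives $b \indbu_A c$, as required.

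The only delicate point is the passage from a common realization $b^\ast$ of $\bigcup_{i<\omega} \tp(b/Ac_i)$ to a $b' \equiv_{Ac} b$ over which the whole sequence is indiscernible: Lemma~\ref{lem:weak-chain-condition} really does require full $Ab'$-indiscernibility, not just consistency of fibers. This extraction step is routine but worth stating carefully, since all the non-triviality of the argument is funneled through it; once $(c_i)_{i<\omega}$ is arranged to be an $\indbu$-Morley sequence over $A$ that is also $Ab'$-indiscernible, the rest is just the weak chain condition, symmetry, and invariance.
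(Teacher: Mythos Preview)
Your proposal is correct and follows essentially the same route as the paper's proof: produce an $\indbu$-Morley sequence in $\tp(c/A)$, use non-dividing to make it indiscernible over $Ab'$ for some $b'\equiv_{Ac}b$, and then invoke \cref{lem:weak-chain-condition} together with symmetry and invariance. The paper simply compresses your careful extraction step into the phrase ``we may assume that $c_{<\omega}$ is $Ab$-indiscernible''; one minor point is that the automorphism you apply should send the entire extracted sequence $(c'_i)_{i<\omega}$ back to $(c_i)_{i<\omega}$ (not merely $c'_0$ to $c$), but this is available since both sequences have the same type over $A$.
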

\begin{proof}
  Let $(c_i)_{i<\omega}$ be an $\indbu$-Morley sequence over $A$ with $c_0 = c$. Since $b \indd_{A}c$, we may assume that $c_{<\omega}$ is $Ab$-indiscernible. Therefore, by \cref{lem:weak-chain-condition}, $b \indbu_A c$.
\end{proof}

\begin{cor}\label{cor:if-Morley-then-total-ind-bu-Morley}
  If $(b_i)_{i<\omega}$ is a (non-dividing) Morley sequence over $A$, then it is a total $\indbu$-Morley sequence over $A$. \hfill $\qed$
\end{cor}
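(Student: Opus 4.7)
The plan is to verify condition (2) of \cref{thm:ind-bu-Morley-char}: it is enough to exhibit infinite sequences $I$ and $J$ with $I+J\equiv^{\EM}_A b_{<\omega}$ and $I\indbu_A J$.

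First, I will extend the given non-dividing Morley sequence $(b_i)_{i<\omega}$ to a non-dividing Morley sequence $(b_i)_{i<\omega+\omega}$ over $A$, by iterating the non-dividing extensions that produced the original. Set $I := (b_i)_{i<\omega}$ and $J := (b_i)_{\omega\le i<\omega+\omega}$. Since the extended sequence is $A$-indiscernible, $I+J\equiv^{\EM}_A b_{<\omega}$ is immediate.

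Next, I will argue that $J \indd_A I$, i.e., $\tp(J/AI)$ does not divide over $A$. Non-dividing has left-finite character in the variable tuple, so this reduces to showing $c \indd_A I$ for every finite subtuple $c\subseteq J$. For each such $c=(b_{i_1},\dots,b_{i_n})$ with $\omega\le i_1<\dots<i_n$, the Morley property gives $b_{i_k}\indd_A b_{<i_k}$ for each $k$, and chaining these within the Morley construction yields $c\indd_A I$. Invoking the preceding proposition gives $J \indbu_A I$, and by symmetry of $\indbu$, $I \indbu_A J$. Applying \cref{thm:ind-bu-Morley-char} then yields that $(b_i)_{i<\omega}$ is a total $\indbu$-Morley sequence over $A$.

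The main obstacle is bootstrapping the element-wise non-dividing statements of the Morley sequence to the tuple-level claim $J \indd_A I$, since non-dividing need not be transitive in an arbitrary theory. The argument has to rely on the structure of the Morley construction directly: each $b_{i_k}$ was placed via a genuine non-dividing extension over all earlier elements of the sequence (including those making up $I$), so any formula in $\tp(J/AI)$ admits consistent witnesses inside a sufficiently extended Morley sequence, giving the required non-dividing conclusion.
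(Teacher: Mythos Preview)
Your approach is essentially the paper's: the corollary is meant to follow immediately from the preceding proposition ($b \indd_A c \Rightarrow b \indbu_A c$), and the only real content is verifying $J \indd_A I$ for a split of the Morley sequence. Routing this through \cref{thm:ind-bu-Morley-char}(2) with a single pair $I,J$ of order type $\omega$ is a perfectly good way to package it.

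The one gap is in your final paragraph. You correctly note that finite character on the left reduces $J \indd_A I$ to $c \indd_A I$ for finite $c \subseteq J$, and that the latter should come from chaining the statements $b_{i_k} \indd_A b_{<i_k}$. You then worry that ``non-dividing need not be transitive in an arbitrary theory'' and retreat to a hand-wave about consistent witnesses. The worry is misplaced: the direction you actually need is \emph{left} transitivity---from $x \indd_A I$ and $y \indd_{Ax} I$ conclude $xy \indd_A I$---and this holds for $\indd$ in every theory. (Given an $A$-indiscernible $(I_j)_{j<\omega}$ with $I_0 = I$, use $x \indd_A I$ to find $x' \equiv_{AI} x$ with $(I_j)$ being $Ax'$-indiscernible; transport $y$ to $y'$ with $x'y' \equiv_{AI} xy$; then use $y' \indd_{Ax'} I$ to find $y'' \equiv_{Ax'I} y'$ with $(I_j)$ being $Ax'y''$-indiscernible.) Together with base monotonicity and right monotonicity of $\indd$, which are also general, the chaining is routine: from $b_{i_k} \indd_A b_{<i_k}$ one gets $b_{i_k} \indd_{Ab_{i_1}\cdots b_{i_{k-1}}} I$, and iterating left transitivity yields $b_{i_1}\cdots b_{i_n} \indd_A I$. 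So replace the last paragraph with a one-line appeal to left transitivity of dividing.
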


 In simple theories, we get the converse.

  \begin{prop}\label{prop:simple-char-ind-bu-Morley-seq}
    Let $T$ be a simple theory. For any $A$ and $A$-indiscernible sequence $I$, the following are equivalent.
    \begin{enumerate}[(1)]
    \item $I$ is an $\indf$-Morley sequence over $A$.
    \item For any $J$ and $K$ with $J+K \equiv^{\EM}_A I$, $J \indb_A K$ (i.e., $\bdd^{\heq}(AJ)\cap \bdd^{\heq}(AK) = \bdd^{\heq}(A)$).
    \item $I$ is a total $\indbu$-Morley sequence over $A$.
    \end{enumerate}
  \end{prop}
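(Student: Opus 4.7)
The plan is to establish the cycle $(1) \To (2) \To (1)$ together with $(1) \To (3) \To (2)$; since $(3) \To (2)$ is immediate from $\bdd^{\heq}\subseteq \bddu$, this yields the three-way equivalence. For $(1) \To (2)$, I would first note that being a Morley sequence over $A$ is an EM-type property, since the condition $b_i \indf_A b_{<i}$ depends only on $\tp(b_{\leq i}/A)$ by invariance of forking. So if $J+K \equiv^{\EM}_A I$ and $I$ is Morley, then $J+K$ is itself a Morley sequence; standard properties of Morley sequences in simple theories (symmetry and transitivity of $\indf$) then give $J \indf_A K$, and $\indf$ implies $\indb$ in any theory.

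For $(1) \To (3)$, I would use \cref{thm:ind-bu-Morley-char}(4): it suffices to verify that $[I]_{\approx_A}\in \bddu(A)$, i.e., that the $\Aut(\Mb/A)$-orbit of $[I]_{\approx_A}$ is bounded. Every element of this orbit is the $\approx_A$-class of some Morley sequence over $A$ with the same EM-type as $I$. As recalled in the introduction, in a simple theory any two Morley sequences over $A$ that have the same Lascar strong type are $\approx_A$-equivalent (the Lascar conjugate $I'$ and the intermediate $K$ gluing $I$ to $J$ form an explicit $\approx_A$-chain). Since there are boundedly many Lascar strong types over $A$ of tuples of any fixed length, the orbit is bounded.

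The main content is $(2) \To (1)$, which I would argue via canonical bases. Given $I$ satisfying $(2)$, stretch $I$ to an $A$-indiscernible sequence $I^\ast = (b_i)_{i<\omega\cdot 2}$ of the same EM-type as $I$, and split $I^\ast = I_1 + I_2$ with $|I_1|=|I_2|=\omega$. Because condition $(2)$ is an EM-type condition, it applies to $I^\ast$ and yields $I_1 \indb_A I_2$. The key simple-theory input is that the canonical base $\cb(I^\ast/A)$ of the sequence lies in $\bdd^{\heq}(AJ)$ for every infinite subsequence $J$ of $I^\ast$; hence $\cb(I^\ast/A) \in \bdd^{\heq}(AI_1)\cap \bdd^{\heq}(AI_2) = \bdd^{\heq}(A)$. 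Since in a simple theory an $A$-indiscernible sequence with canonical base in $\bdd^{\heq}(A)$ is a Morley sequence over $A$, both $I^\ast$ and $I$ are Morley (Morleyness being an EM-type property). The main obstacle is the careful invocation of the classical canonical-base machinery in the setting of hyperimaginary parameters, but these facts are all standard in the simple-theory literature.
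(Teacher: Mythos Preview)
Your proof is correct. The argument for $(2)\Rightarrow(1)$ is essentially the paper's: both rely on the canonical-base characterization $b\indf_A C \Leftrightarrow \cb(\stp(b/C))\subseteq\bdd^{\heq}(A)$ and on the fact that the canonical base of an indiscernible sequence lies in $\bdd^{\heq}$ of any infinite segment. You phrase this directly in terms of $\cb(I^\ast/A)$, while the paper routes through Adler's ``intersection'' axiom and his characterization of $\indf$ via the existence of a total $\indb$-Morley witnessing sequence; the content is the same.

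The one genuine difference is $(1)\Rightarrow(3)$. The paper invokes \cref{cor:if-Morley-then-total-ind-bu-Morley}, i.e.\ the implication $\indd\Rightarrow\indbu$ established just above via \cref{lem:weak-chain-condition}, so that a non-dividing Morley sequence is automatically total $\indbu$-Morley. You instead appeal to \cref{thm:ind-bu-Morley-char}(4), using the simple-theory fact (recalled in the introduction) that Morley sequences with the same Lascar strong type over $A$ are $\approx_A$-equivalent, so that $[I]_{\approx_A}$ has only boundedly many $\Aut(\Mb/A)$-conjugates. Both approaches are short; the paper's has the advantage of not needing simplicity for this direction (it works in any theory where $I$ happens to be a non-dividing Morley sequence), while yours avoids quoting \cref{lem:weak-chain-condition} but uses a fact specific to simple theories. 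Your additional direct argument for $(1)\Rightarrow(2)$ is fine but redundant once $(1)\Rightarrow(3)\Rightarrow(2)$ is in place.
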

  \begin{proof}     (1)$\To$(3) is \cref{cor:if-Morley-then-total-ind-bu-Morley}. (3)$\To$(2) is obvious.

    (2)$\To$(1) follows by the same reasoning as in \cite[Sec.~3.1]{Adler2005ExplanationOI}. Specifically, in simple theories, $b \indf_AC$ (with $A \subseteq C$) holds if and only if $\cb(\stp(b/C)) \subseteq \bdd^{\heq}(A)$. This implies that $\indf$ satisfies intersection over hyperimaginaries (i.e., if $A_0 \subseteq C$ $A_1\subseteq C$, $b \indf_{A_0} C$, and $b \indf_{A_1} C$, then $b \indf_{\bdd^{\heq}(A_0)\cap \bdd^{\heq}(A_1)} C$). By the same argument as in \cite[Lem.~3.4]{Adler2005ExplanationOI}, we have that $b \indf_A C$ if and only if
    \begin{itemize}
    \item[$(\ast)$]      there is an $AC$-indiscernible sequence $(b_i)_{i <\omega}$ with $b_0 = b$ such that for any $J$ and $K$ with $J+K \equiv^{\EM}_A b_{<\omega}$, $J \indb_A K$.
    \end{itemize}
    Assume that (2) holds. Fix $(b_i)_{i < \omega+\omega}\equiv^{\EM}_A I$. $(b_i)_{\omega\leq i < \omega+\omega}$ is $Ab_{<\omega}$-indiscernible and satisfies $(\ast)$. Therefore $b_\omega \indf_A b_{<\omega}$, and we have that $b_{<\omega+\omega}$, and therefore $I$, is an $\indf$-Morley sequence over $A$.
      \end{proof}

  On the other hand, there are easy examples in NIP theories (such as $\mathsf{DLO}$) of total $\indbu$-Morley sequences that are not strict Morley sequences; any sequence generated by an $A$-invariant type that is not strictly $A$-invariant will be a total $\indbu$-Morley sequence over $A$ but not a strict Morley sequence over $A$. $\mathsf{DLO}$ can also be used to show that not every $\indt$-Morley sequence in a rosy theory is a total $\indbu$-Morley sequence (e.g., \cite[Ex.~3.13]{Adler2005ExplanationOI}).

In NSOP$_1$ theories, we get that tree Morley sequences are total $\indbu$-Morley sequences.

  \begin{prop}\label{prop:total-ind-bu-Morley-seq-in-NSOP1}
    Let $T$ be an NSOP$_1$ theory, and let $M \models T$. If $I$ is a tree Morley sequence over $M$, then it is a total $\indbu$-Morley sequence over $M$.
  \end{prop}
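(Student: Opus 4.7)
My plan is to reduce the claim to Shelah's boundedness criterion from \cref{thm:ind-bu-Morley-char} and then discharge it using the standard NSOP$_1$ amalgamation machinery from \cite{KaplanRamseyOnKim}. By the equivalence (1)$\Leftrightarrow$(4) of \cref{thm:ind-bu-Morley-char}, it suffices to show that $[I]_{\approx_M}\in \bddu(M)$. Since the realizations of $\tp(I/M)$ form a single $\Aut(\Mb/M)$-orbit, this will follow from the stronger statement that \emph{every} $J\equiv_M I$ satisfies $J \approx_M I$ (so that $[I]_{\approx_M}$ in fact lies in $\dclu(M)$).

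The core of the argument is the zigzag claim stated in the introduction: given any $J \equiv_M I$, I will produce $K_0, K_1, K_2$ such that $I+K_0$, $K_1+K_0$, $K_1+K_2$, and $J+K_2$ are all $M$-indiscernible. Unwinding \cref{defn:approx-A}, this immediately yields $I \sim_M K_0 \sim_M K_1 \sim_M K_2 \sim_M J$, and hence $I \approx_M J$.

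To build the $K_i$'s, I will use the standard NSOP$_1$ facts about tree Morley sequences. First, any tree Morley sequence over $M$ is a total $\indK$-Morley sequence over $M$ in the sense of \cite{KaplanRamseyOnKim}; in particular, if $I' \equiv^{\EM}_M I$ and $I+I'$ is $M$-indiscernible, then $I \indK_M I'$. Starting from $J \equiv_M I$, I will use extension for Kim-independence over the model $M$ together with the independence theorem for $\indK$ to find a tree Morley sequence $K_1 \equiv_M I$ over $M$ with $K_1 \indK_M I$ and $K_1 \indK_M J$. Now, since $I$ and $K_1$ are both tree Morley over $M$ and $I \indK_M K_1$, I can amalgamate their canonical right-extensions: applying the independence theorem inductively to longer and longer finite prefixes of a putative common right-tail, and passing to a limit by compactness, produces an $M$-indiscernible sequence $K_0$ such that both $I + K_0$ and $K_1 + K_0$ are $M$-indiscernible. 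The same construction applied to $K_1$ and $J$ produces $K_2$ with $K_1 + K_2$ and $J + K_2$ both $M$-indiscernible.

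The main obstacle is the amalgamation step producing $K_0$ (and similarly $K_2$): one must ensure that the common extension of two tree Morley sequences built from the independence theorem is itself sufficiently generic that the concatenations with each of $I$ and $K_1$ are \emph{jointly} $M$-indiscernible, not merely $M$-indiscernible separately on each side. This is exactly the sort of chain-of-independence-theorem argument used by Kaplan and Ramsey to prove the symmetry of Kim-forking and to set up their framework of tree Morley sequences, and it is where the NSOP$_1$ hypothesis is actually consumed; once it is in place, the reduction to \cref{thm:ind-bu-Morley-char} is formal.
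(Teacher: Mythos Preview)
Your overall strategy matches the paper's: reduce via \cref{thm:ind-bu-Morley-char} to showing $I \approx_M J$ whenever $J$ has the same type as $I$ over $M$ (the paper uses condition (3) and $J \equiv^{\EM}_M I$, you use condition (4) and $J \equiv_M I$; over a model these verifications coincide), and then produce a four-step zigzag through an auxiliary $K_1 \equiv_M I$ chosen with $K_1 \indK_M IJ$. The paper's zigzag has exactly the same shape as yours.

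Where you diverge from the paper is in how the common right-extension $K_0$ (and likewise $K_2$) is built, and this is precisely the step you flag as ``the main obstacle.'' You propose an inductive independence-theorem-plus-compactness argument on finite prefixes of the tail. The paper bypasses this entirely with a \emph{single} application of the independence theorem: fix any $I'$ and $K'$ of the same order type with $I+I'$ and $K_1+K'$ each $M$-indiscernible. Since $I$ and $K_1$ are tree Morley (hence total $\indK$-Morley), we have $I' \indK_M I$ and $K' \indK_M K_1$; since also $I' \equiv_M K'$ and $I \indK_M K_1$, one application of the independence theorem produces $I''$ with $I'' \equiv_{MI} I'$ and $I'' \equiv_{MK_1} K'$, so that $I+I''$ and $K_1+I''$ are simultaneously $M$-indiscernible. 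This $I''$ is exactly your $K_0$, obtained in one stroke rather than as a limit. The same move with $J$ in place of $I$ yields your $K_2$. So your obstacle dissolves once you amalgamate \emph{whole} right-tails as single objects rather than building them elementwise; no chain-of-independence-theorems argument is needed.
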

  \begin{proof}
    Since we are working over a model, Lascar strong types are types. Let $J$ be a sequence realizing the same EM-type as $I$ over $M$. Find $K \equiv_M I$ such that $K \indK_M IJ$. Let $I'$, $J'$, and $K'$ have the same order type such that $I+I'$, $J+J'$, and $K+K'$ are all $M$-indiscernible. Since these are tree Morley sequences, we have that $I\indK_M I'$, $J\indK_M J'$, and $K\indK_M K'$. Therefore, by the independence theorem for NSOP$_1$ theories, we can find $I''$ and $J''$ such that $I+I''$, $K+I''$, $K+J''$, and $J+J''$ are all $M$-indiscernible, so $I \approx_M J$.   

    Since we can do this for any such $J$, we have that $I$ is a total $\indbu$-Morley sequence.
  \end{proof}

  The converse is unclear. The argument in the context of simple theories relies on the existence of canonical bases for types.

  \begin{quest}
    If $T$ is NSOP$_1$, is every total $\indbu$-Morley sequence over $M\models T$ a tree Morley sequence over $M$?
  \end{quest}

\bibliographystyle{plain}
\bibliography{../ref}

\end{document}